\documentclass[reqno,12pt]{amsart}
\headheight=8pt     \topmargin=0pt
\textheight=632pt   \textwidth=432pt
\oddsidemargin=18pt \evensidemargin=18pt





\theoremstyle{plain}
\newtheorem{thm}{Theorem}[section]
\newtheorem{cor}[thm]{Corollary} 
\newtheorem{lemma}[thm]{Lemma} 
\newtheorem{prop}[thm]{Proposition}


\theoremstyle{remark}

\newtheorem{remark}[thm]{Remark}

\theoremstyle{definition}

\newtheorem{defi}[thm]{Definition}
\newtheorem{example}[thm]{Example}

\usepackage{amscd,amssymb,comment,epic,eepic,euscript,graphics}


\newcount\theTime
\newcount\theHour
\newcount\theMinute
\newcount\theMinuteTens
\newcount\theScratch
\theTime=\number\time
\theHour=\theTime
\divide\theHour by 60
\theScratch=\theHour
\multiply\theScratch by 60
\theMinute=\theTime
\advance\theMinute by -\theScratch
\theMinuteTens=\theMinute
\divide\theMinuteTens by 10
\theScratch=\theMinuteTens
\multiply\theScratch by 10
\advance\theMinute by -\theScratch

\def\today{{\number\day\space
 \ifcase\month\or
  January\or February\or March\or April\or May\or June\or
  July\or August\or September\or October\or November\or December\fi
 \space\number\year}}


\newcommand{\la}{\lambda}
\newcommand{\De}{\Delta}

\newcommand\Rfr{{\mathfrak R}}
\newcommand\Mcal{{\mathcal{M}}} 
\newcommand\Reals{{\mathbf R}}
\newcommand\Complex{{\mathbf C}}
\newcommand\Nats{{\mathbf N}}
\newcommand{\BH}{\mathcal{B}(\mathcal{H})}
\newcommand{\Hcal}{\mathcal{H}}
\newcommand\Tr{{\mathrm{Tr}}}
\newcommand{\st}{\,:\,}
\newcommand{\im}{\text{\rm Im}}
\newcommand{\re}{\text{\rm Re}}
\renewcommand{\i}{\text{\rm i}}

\newcommand{\norm}[1]{\left\Vert#1\right\Vert}
\newcommand\smdb[2]{\underset{#2}{\underbrace{#1}}}
\newcommand\ncl[1]{{L_#1(\Mcal,\tau)}}
\newcommand\ncs[1]{{\mathcal{L}_#1(\Mcal,\tau)}}
\newcommand{\dd}[3]{\De_{\la_1,\dots,#1}^{(#2)}\left(#3\right)}


\begin{document}

\title[Spectral shift]{Higher order spectral shift}

\author[Dykema]{Ken Dykema$^{*}$}
\author[Skripka]{Anna Skripka}
\address{Department of Mathematics, Texas A\&M University,
College Station, TX 77843-3368, USA}
\email{kdykema@math.tamu.edu}
\email{askripka@math.tamu.edu}
\thanks{\footnotesize $^{*}$Research supported in part by NSF grant DMS-0600814}

\subjclass[2000]{Primary 47A55, 47A56; secondary 46L51, 46L54}

\keywords{Spectral shift function, Taylor formula.}

\date{17 December, 2008}

\begin{abstract}
We construct higher order spectral shift functions, extending the perturbation theory results of M. G. Krein \cite{Krein1} and L. S. Koplienko \cite{Kop84} on representations for the remainders of the first and second order Taylor-type approximations of operator functions. The higher order spectral shift functions represent the remainders of higher order Taylor-type approximations; they can be expressed recursively via the lower order (in particular, Krein's and Koplienko's) ones. We also obtain higher order spectral averaging formulas generalizing the Birman-Solomyak spectral averaging formula. The results are obtained in the semi-finite von Neumann algebra setting, with the perturbation taken in the Hilbert-Schmidt class of the algebra.
\end{abstract}

\maketitle

\section{Introduction}

Let $\Hcal$ be a separable Hilbert space and $\BH$ the algebra of bounded linear operators on $\Hcal$. Let $\Mcal$ be a semi-finite von Neumann algebra acting on $\Hcal$ and $\tau$ a semi-finite normal faithful trace on $\Mcal$.
We study how the value $f(H_0)$ of a function $f$ on a self-adjoint operator $H_0$ in $\Mcal$ changes under a perturbation $V=V^*\in\Mcal$ of the operator argument $H_0$.
It is well known that for certain functions $f$,
the value $f(H_0+V)$ can be approximated by the Fr\'{e}chet derivatives of the mapping $H^*=H\mapsto f(H)$ at point $H_0$.

\begin{thm}$($cf. \cite[Theorem 1.43, Corollary 1.45]{Schwartz}$)$\label{prop:Rpf}
Let $f:\Reals\to\Complex$ be a bounded function such that the mapping $H\mapsto f(H)$ defined on self-adjoint elements of $\BH$ is $p$ times continuously differentiable in the sense of Fr\'{e}chet (and, hence, in the sense of G\^ateaux). Let $H_0=H_0^*,V=V^*\in\BH$ and denote \begin{align}\label{f-la:rem}
R_{p,H_0,V}(f)=f(H_0+V)-\sum_{j=0}^{p-1}\frac{1}{j!}\frac{d^j}{dt^j}
\bigg|_{t=0}f(H_0+tV).\end{align}
Then \begin{align}\label{f-la:Rpf} R_{p,H_0,V}(f)=\frac{1}{(p-1)!}\int_0^1
(1-t)^{p-1}\frac{d^p}{dt^p}f(H_0+tV)\,dt
\end{align} and
\begin{align}\label{f-la:as}
\norm{R_{p,H_0,V}(f)}=\mathcal{O}(\norm{V}^p).
\end{align}
\end{thm}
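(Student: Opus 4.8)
The plan is to reduce the assertion to the classical Taylor formula with integral remainder for a $\BH$-valued function of one real variable. Fix self-adjoint $H_0,V\in\BH$ and set $g(t)=f(H_0+tV)\in\BH$ for $t\in\Reals$, where $f(H_0+tV)$ is defined through the bounded Borel functional calculus. The map $t\mapsto H_0+tV$ is affine into the (real) Banach space of self-adjoint elements of $\BH$, and $H\mapsto f(H)$ is $p$ times continuously Fr\'echet differentiable there by hypothesis, so the chain rule gives $g\in C^{p}([0,1],\BH)$ with
\begin{align*}
g^{(j)}(t)\;=\;D^{j}f(H_0+tV)\big[\,\underbrace{V,\dots,V}_{j}\,\big],\qquad 0\le j\le p,
\end{align*}
where $D^{j}f(H)$ is the $j$-th Fr\'echet derivative of $f$ at $H$. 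In particular each $g^{(j)}$ is norm-continuous on $[0,1]$ and $g^{(j)}(0)=\frac{d^{j}}{dt^{j}}\big|_{t=0}f(H_0+tV)$, so that $R_{p,H_0,V}(f)=g(1)-\sum_{j=0}^{p-1}\frac{1}{j!}g^{(j)}(0)$.

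To obtain \eqref{f-la:Rpf} I would prove
\begin{align*}
g(1)-\sum_{j=0}^{p-1}\frac{1}{j!}\,g^{(j)}(0)\;=\;\frac{1}{(p-1)!}\int_0^1(1-t)^{p-1}\,g^{(p)}(t)\,dt
\end{align*}
by induction on $p$. The case $p=1$ is the fundamental theorem of calculus for the $C^{1}$ function $g$, namely $g(1)-g(0)=\int_0^1 g'(t)\,dt$. For the inductive step, integrate by parts on the right-hand side with $u=(1-t)^{p-1}$ and $dv=g^{(p)}(t)\,dt$ (so $v=g^{(p-1)}(t)$): since $(1-t)^{p-1}$ vanishes at $t=1$ when $p\ge 2$, the boundary term equals $-\frac{1}{(p-1)!}g^{(p-1)}(0)$, while the remaining integral is $\frac{1}{(p-2)!}\int_0^1(1-t)^{p-2}g^{(p-1)}(t)\,dt$, which by the induction hypothesis equals $g(1)-\sum_{j=0}^{p-2}\frac{1}{j!}g^{(j)}(0)$; adding the two contributions gives the claim for $p$. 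All the integrals are norm-convergent Riemann (equivalently, Bochner) integrals of continuous $\BH$-valued functions, so this manipulation is legitimate.

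For the estimate \eqref{f-la:as} I would use continuity of $D^{p}f$ at $H_0$: choose $\delta>0$ and $M<\infty$ such that the operator norm of the bounded $p$-linear map $D^{p}f(H)$ is at most $M$ whenever $\norm{H-H_0}\le\delta$. If $\norm{V}\le\delta$, then $\norm{(H_0+tV)-H_0}\le\delta$ for every $t\in[0,1]$, hence $\norm{g^{(p)}(t)}\le\norm{D^{p}f(H_0+tV)}\,\norm{V}^{p}\le M\norm{V}^{p}$, and substituting into \eqref{f-la:Rpf},
\begin{align*}
\norm{R_{p,H_0,V}(f)}\;\le\;\frac{1}{(p-1)!}\int_0^1(1-t)^{p-1}\norm{g^{(p)}(t)}\,dt\;\le\;\frac{M}{p!}\,\norm{V}^{p}
\end{align*}
for all $\norm{V}\le\delta$, which is \eqref{f-la:as}. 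If a bound $\norm{R_{p,H_0,V}(f)}\le C\norm{V}^{p}$ valid for \emph{all} $V$ is wanted, one combines this with the trivial estimate $\norm{R_{p,H_0,V}(f)}\le\norm{f}_{\infty}+\sum_{j=0}^{p-1}\frac{1}{j!}\norm{D^{j}f(H_0)}\,\norm{V}^{j}$ on the region $\norm{V}\ge\delta$.

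The induction and the norm estimate are routine; the point that needs care is the first paragraph — that $t\mapsto f(H_0+tV)$ is genuinely $C^{p}$ with derivatives given by the iterated chain rule — which rests on the Fr\'echet-differentiability results cited from \cite{Schwartz}. For the estimate one should be careful to invoke only continuity of $D^{p}f$ \emph{at the point} $H_0$, since a closed ball in $\BH$ is not compact and global boundedness of $D^{p}f$ on it is not available.
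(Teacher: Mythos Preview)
Your argument is correct and is the standard derivation of Taylor's formula with integral remainder for a Banach-space-valued $C^{p}$ function of one real variable, combined with the chain rule for Fr\'echet differentiable maps; the integration by parts and the local estimate via continuity of $D^{p}f$ at $H_0$ are carried out accurately.

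There is, however, nothing to compare against in the paper: Theorem~\ref{prop:Rpf} is stated with a citation to \cite[Theorem 1.43, Corollary 1.45]{Schwartz} and is not proved in the paper itself. The authors import it as a known result and use it as a tool. Your write-up is essentially a reconstruction of the classical proof one would find in the cited reference.
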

Theorem \ref{prop:Rpf} generalizes the Taylor approximation theorem for scalar functions.
It was proved in \cite{Daletskii} that for $f\in C^{2p}(\Reals)$, the operator function $f$ is Fr\'{e}chet differentiable $p$ times on $\BH$, with the derivative written as an iterated operator integral. For $f\in\mathcal{W}_p$ (the set of functions $f\in
C^p(\Reals)$ such that for each $j=0,\dots,p$, the derivative $f^{(j)}$ equals the Fourier transform $\int_\Reals e^{\i t\la}\,d\mu_{f^{(j)}}(\la)$ of a finite Borel
measure $\mu_{f^{(j)}}$) and a (possibly) unbounded $H_0$, the differentiability of $H\mapsto f(H)$ in the sense of Fr\'{e}chet of order $p$ was established in \cite{Azamov0}; in that case, the G\^ateaux derivative $\frac{d^p}{dt^p}f(H_0+tV)$ was represented as a Bochner-type multiple operator integral. For $f$ in the Besov class $B^1_{\infty1}(\Reals)\cap B^p_{\infty1}(\Reals)$, it is known that the G\^ateaux derivative of $f$ of order $p$ exists \cite{PellerMult}, but the bound \eqref{f-la:as} has not been proved.

In the scalar case ($\dim(\Hcal)=1$), we have that $\tau[R_{p,H_0,V}(f)]$ is a bounded functional on the space of functions $f^{(p)}$ and
\begin{align}\label{f-la:dh1}
|\tau[R_{p,H_0,V}(f)]|\leq\frac{\tau(|V|^p)}{p!}\norm{f^{(p)}}_\infty.
\end{align} In the case of a nontrivial $\Hcal$ ($\dim(\Hcal)>1$), it is generally hard to separate contribution of the perturbation $V$ to the estimate for the remainder \eqref{f-la:as} from contribution of the scalar function $f^{(p)}$. One of approaches to \eqref{f-la:dh1} is the estimate $\norm{R_{p,H_0,V}(f)}\leq C(H_0,V)\norm{f^{(2p)}}_\infty$, for $f\in C^{2p}(\Reals)$ \cite{Daletskii}, with $C(H_0,V)$ a constant depending on bounded self-adjoint operators $H_0$ and $V$.
Another approach is the estimate
\begin{align}\label{f-la:mest}
|\tau[R_{p,H_0,V}(f)]|\leq\frac{\tau(|V|^p)}{p!}\norm{\mu_{f^{(p)}}}
\end{align} \cite{Dostanic} (see \cite{GPS} for an example when $\norm{\mu_{f^{(p)}}}$ can be replaced with $\norm{\widehat{f^{(p)}}}_1$), for $\tau$ the usual trace, $H_0=H_0^*$ an operator in $\Hcal$, $V=V^*$ an operator in the Schatten $p$-class, and $f\in\mathcal{W}_p$.
If $H_0=H_0^*$ is affiliated with a semi-finite von Neumann algebra $\Mcal$, $V=V^*$ is in the $\tau$-Schatten $p$-class of $\Mcal$, and $f\in\mathcal{W}_p$, then the remainder $R_{p,H_0,V}(f)$ belongs to the Schatten $p$-class of $\Mcal$ as well and
\begin{align*}
\left[\tau(|R_{p,H_0,V}(f)|^p)\right]^{1/p}+\norm{R_{p,H_0,V}(f)}\leq
\frac{\left([\tau(|V|^p)]^{1/p}+\norm{V}\right)^p}{p!}\norm{\mu_{f^{(p)}}}
\end{align*} see \cite{Azamov0}. 

In the particular case of $p=1$ or $p=2$, the functional $\tau[R_{p,H_0,V}(f)]$ is bounded on the space of functions $f'$ or $f''$, respectively, and \eqref{f-la:dh1} holds. The measure representing the functional is absolutely continuous (with respect to Lebesgue's measure), with the density equal to Krein's spectral shift function $\xi_{H_0+V,H_0}$ or Koplienko's spectral shift function $\eta_{H_0,H_0+V}$, respectively. That is, we have
\begin{align}\label{f-la:trp=1}
\tau[R_{1,H_0,V}(f)]=\int_\Reals f'(t)\xi_{H_0+V,H_0}(t)\,dt,
\quad\quad|\tau[R_{1,H_0,V}(f)]|\leq\tau(|V|)\norm{f'}_\infty
\end{align} and
\begin{align}\label{f-la:trp=2}
\tau[R_{2,H_0,V}(f)]=\int_\Reals f''(t)\eta_{H_0,H_0+V}(t)\,dt,
\quad\quad|\tau[R_{2,H_0,V}(f)]|\leq\frac{\tau(|V|^2)}{2}\norm{f''}_\infty.
\end{align}

Existence of $\xi_{H_0+V,H_0}$, with $\tau(|V|)<\infty$, satisfying \eqref{f-la:trp=1} for $f\in\mathcal{W}_1$, was proved in the setting $\Mcal=\BH$ in \cite{Krein1} (cf. also \cite{Lifshits})
and extended to the setting of an arbitrary semi-finite von Neumann algebra $\Mcal$ in \cite{Azamov,Carey}. Moreover, when $\Mcal=\BH$, the trace formula in \eqref{f-la:trp=1} is known to hold for  $f\in B^1_{\infty 1}(\Reals)$ \cite{PellerKr}. In the setting $\Mcal=\BH$, existence of $\eta_{H_0,H_0+V}$, with $V$ in the Hilbert-Schmidt class, satisfying \eqref{f-la:trp=2} for bounded rational functions $f$ was proved in \cite{Kop84}. Later, it was proved in \cite{PellerKo} that $\eta_{H_0,H_0+V}$ satisfies the trace formula in \eqref{f-la:trp=2} for functions $f$ in $B^1_{\infty1}(\Reals)\cap B^p_{\infty1}(\Reals)$.
When $V$ is in the trace class, Koplienko's spectral shift function can be written explicitly as \begin{align}\label{f-la:koviaxi}
\eta_{H_0,H_0+V}(t)=-\int_{-\infty}^t\xi_{H_0+V,H_0}(\la)\,d\la+\tau[E_{H_0}((-\infty,t))V],
\end{align} where $E_{H_0}$ is the spectral measure of $H_0$ \cite{Kop84}. In the context of a general $\Mcal$, Koplienko's spectral shift function $\eta_{H_0,H_0+V}$, with $\tau(|V|)<\infty$, and the representation \eqref{f-la:koviaxi} are discussed in \cite{convexity}.

For $p\geq 3$, $\Mcal=\BH$, and $\tau(|V|^p)<\infty$, the distribution $\tau[R_{p,H_0,V}(f)]$ is given by an $L^2$-function $\gamma_{p,H_0,V}$ satisfying
\[\tau[R_{p,H_0,V}(f)]=\frac{\tau(V^p)}{p!}f^{(p)}(0)+\int_\Reals f^{(p+1)}(t)\gamma_{p,H_0,V}(t)\,dt,\] for all $f\in\mathcal{W}_{p+1}$ \cite{Dostanic}.
It was conjectured in \cite{Kop84} that there exists a Borel measure $\nu_p$ with the total variation bounded by $\frac{\tau(|V|^p)}{p!}$ such that
\begin{align}\label{f-la:tr_intro}
\tau[R_{p,H_0,V}(f)]=\int_\Reals f^{(p)}(t)\,d\nu_p(t),\end{align}
for bounded rational functions $f$. Unfortunately, the proof of \eqref{f-la:tr_intro} in \cite{Kop84} was based on the false claim that for $V$ in the Schatten $p$-class, $p>2$, the set function defined on rectangles of $\Reals^{p+1}$ by
\begin{align}
\label{f-la:sf}
A_1\times A_2\times\dots\times A_{p+1}\mapsto\tau[E(A_1)VE(A_2)V\dots VE(A_{p+1})],
\end{align} where $E(\cdot)$ is a spectral measure on $\Reals$ with values in $\BH$, extends to a (countably additive) measure of bounded variation (see a counterexample in section \ref{sec:mm}). When $V$ is in the Hilbert-Schmidt class of $\Mcal=\BH$, the set function in \eqref{f-la:sf} does extend to a (countably-additive) measure of bounded variation \cite{Birman96,Pavlov} and thus ideas of \cite{Kop84} can be applied to prove existence of a measure $\nu_p$ satisfying \eqref{f-la:tr_intro} for bounded rational functions (see section \ref{sec:bh}). In this case, the total variation of $\nu_p$ is bounded by \[\norm{\nu_p}\leq\frac{\big(\tau\left(|V|^2\right)\big)^{p/2}}{p!}.\] Adjusting techniques of \cite{Dostanic} then extends \eqref{f-la:tr_intro} to the functions $f\in\mathcal{W}_p$.

For $\Mcal$ a von Neumann algebra acting on an infinite-dimensional Hilbert space $\Hcal$, the set function in \eqref{f-la:sf},
with $E(\cdot)$ the spectral measure attaining its values in $\Mcal$ and $V\in\Mcal$ satisfying $\tau(|V|^2)<\infty$, may fail to extend to a finite measure on $\Reals^{p+1}$ for $p>2$ even if $\tau$ is finite (see a counterexample in section \ref{sec:mm}). Therefore, the approach of \cite{Kop84} is not applicable in the proof of \eqref{f-la:tr_intro}. When $\Mcal$ is a general semi-finite von Neumann algebra, we prove \eqref{f-la:tr_intro} for $p=3$ by relating $R_{3,H_0,V}$ to $R_{2,H_0,V}$, which allows to reduce the problem to the case of $p=2$ (see sections \ref{sec:ct} and \ref{sec:vn}). We also study the case when $\Mcal$ is finite and $H_0,V\in\Mcal$ are free with respect to the finite trace $\tau$ (which is assumed
normalized so that $\tau(1)=1$).
Freeness was introduced by Voiculescu (see, for example, \cite{VDN})
and amounts to a specific prescription for the values of
the mixed moments of $H_0$ and $V$ in terms of the individual moments
of $H_0$ and $V$.
Free perturbations have appeared in the study of
quite general
operators in finite von Neumann algebras,
for example in the seminal work of Haagerup and
Schultz~\cite{HS}.
Assuming freeness, we show that for all $p$ the set function in \eqref{f-la:sf} extends
to a finite measure on $\Reals^{p+1}$ (see section \ref{sec:mm}), from which \eqref{f-la:tr_intro} can be derived.

Under the assumptions that we impose to prove existence of $\nu_p$ satisfying \eqref{f-la:tr_intro} (see discussion in the two preceding paragraphs), we also construct a function $\eta_p$, called the $p$th-order spectral shift function, such that $d\nu_p(t)=\eta_p(t)\,dt$, provided $H_0$ is bounded (see statements in section \ref{sec:mr} and proofs in sections \ref{sec:bh} and \ref{sec:vn}). The spectral shift function of order $p$ admits the recursive representation
\begin{align}
\label{f-la:eta_intro}
\eta_p(t)=-\int_{-\infty}^t\eta_{p-1}(\la)\,d\la+\int_{\Reals^{p-1}}
spline_{\la_1,\dots,\la_{p-1}}(t)\,dm_{p-1,H_0,V}(\la_1,\dots,\la_{p-1}),
\end{align}
where $spline_{\la_1,\dots,\la_{p-1}}$ is a piecewise polynomial of degree $p-2$ with breakpoints $\la_1,\dots,\la_{p-1}$ and $dm_{p-1,H_0,V}(\la_1,\dots,\la_{p-1})$ is a measure on $\Reals^{p-1}$ determined by $p-1$ copies of the spectral measure of $H_0$ intertwined with $p-1$ copies of the perturbation $V$ (see section \ref{sec:mr} for the precise formula). As it is noticed in section \ref{sec:mr}, the function $\eta_2$ given by \eqref{f-la:eta_intro} coincides with the function $\eta_{H_0,H_0+V}$ given by \eqref{f-la:koviaxi}, provided $\tau(|V|)<\infty$. The techniques of \cite{Kop84} that prove existence of $\nu_p$ when $\Mcal=\BH$ do not give absolute continuity of $\nu_p$. We obtain $\eta_p$ by analyzing the Cauchy transform of the measure $\nu_p$ satisfying the trace formula \eqref{f-la:tr} (see section \ref{sec:ct}).

The approach of this paper, developed mainly for higher order
spectral shift functions, contributes to the subject of Krein's spectral shift function as
well. In 1972, using Theorem \ref{prop:Rpf} \eqref{f-la:Rpf} and the double operator integral representation for the derivative
\[\frac{d}{dx}f(H_0+xV)={\int}_{\Reals^2}
\De_{\la_1,\la_2}^{(1)}(f)\,E_{H_0+xV}(d\la_1)VE_{H_0+xV}(d\la_2),\]
M.Sh. Birman and M.Z. Solomyak \cite{Birman72} showed that
\[\tau\big[f(H_0+V)-f(H_0)\big]=\int_\Reals f'(t)
\int_0^1\tau[E_{H_0+xV}(dt)]\,dx\] (see \cite[Theorem 6.3]{Azamov0}
for the analogous result in the context of von Neumann algebras), which along with Krein's trace
formula
\[\tau\big[f(H_0+V)-f(H_0)\big]=\int_\Reals f'(t)\xi_{H_0+V,H_0}(t)\,dt\] \cite{Krein1,Carey,Azamov}
implied the spectral averaging formula
\begin{align}
\label{f-la:sa}
\int_0^1\tau[E_{H_0+xV}(dt)]\,dx=\xi_{H_0+V,H_0}(t)\,dt\end{align}
(see \cite{MakarovL,ms,Simonbs,convexity} for generalizations and extensions).
The operator $f(H_0+V)-f(H_0)$ also admits a double operator integral representation
\begin{align}\label{f-la:doi}
f(H_0+V)-f(H_0)={\int}_{\Reals^2}
\De_{\la_1,\la_2}^{(1)}(f)\,E_{H_0+V}(d\la_1)VE_{H_0}(d\la_2).
\end{align}
A natural question raised by M.Sh. Birman (see, e.g., \cite{BirmanLN}) asks if it is possible to deduce existence of $\xi_{H_0+V,H_0}$, or equivalently, absolute continuity of the measure $\int_0^1\tau[E_{H_0+xV}(dt)]\,dx$, directly from the double operator integral representation
\eqref{f-la:doi}. For $\Mcal$ a finite von Neumann algebra, we answer this question affirmatively and represent $\xi_{H_0+V,H_0}$ as an
integral of a basic spline straightforwardly from \eqref{f-la:doi} (see section \ref{sec:xi}). A general property of a basic spline is that it has the minimal support among all the splines with the same degree, smoothness, and domain properties (see, e.g., \cite{Vore}). When $\dim(\Hcal)<\infty$, higher order spectral shift functions can be written as integrals of basic splines as well (see section \ref{sec:xi}).

By combining different representations for the remainder $\tau[R_{p,H_0,V}(f)]$ in the setting of $\Mcal=\BH$, we prove absolute continuity of the measure \begin{align*}
A\mapsto\int_0^1(1-x)^{p-1}\tau\big[(E_{H_0+xV}(A)V)^p\big]\,dx
\end{align*} and derive higher order analogs of the spectral averaging formula \eqref{f-la:sa} (see section \ref{sec:sa}).

Basic technical tools of the paper are discussed in sections \ref{sec:dd} -- \ref{sec:mm}, main results are stated in section \ref{sec:mr} and then proved in sections \ref{sec:ct} -- \ref{sec:vn}, additional representations for spectral shift functions are obtained in section \ref{sec:xi}, and the Birman-Solomyak spectral averaging formula is generalized in section \ref{sec:sa}.
By saying ``the standard setting" or ``$\tau$ is the standard trace", we implicitly assume that $\Mcal=\BH$ and $\tau$ is the usual trace defined on the trace class operators of $\BH$.
Let $\ncl{p}$ denote the non-commutative $L_p$-space of $(\Mcal,\tau)$ with the norm $\norm{V}_p=\tau(|V|^p)^{1/p}$ and $\ncs{p}=\ncl{p}\cap\Mcal$ the Schatten $p$-class of
$(\Mcal,\tau)$. The Schatten $p$-class is equipped with the norm $\norm{\cdot}_{p,\infty}=\norm{\cdot}_p+\norm{\cdot}$, where $\norm{\cdot}$ is the operator norm. Throughout the paper, $H_0$ and $V$ denote self-adjoint operators in $\Mcal$ or affiliated with $\Mcal$; $V$ is mainly taken to be an element of $\ncs{p}$. Let $\Rfr$ denote the set of rational functions on $\Reals$ with nonreal poles, $\Rfr_b$ the subset of $\Rfr$ of bounded functions. The symbol $f_z$ is reserved for the function $\Reals\ni\la\mapsto\frac{1}{z-\la}$, where $z\in\Complex\setminus\Reals$.

\section{Divided differences and splines}

\label{sec:dd}

\begin{defi}\label{prop:dddef}
The divided difference of order $p$ is an operation on functions $f$ of one (real) variable, which we will usually call $\la$, defined recursively as follows:
\begin{align*}
&\Delta^{(0)}_{\la_1}(f):=f(\la_1),\\
&\dd{\la_{p+1}}{p}{f}:=
\begin{cases}
\frac{\Delta^{(p-1)}_{\la_1,\dots,\la_{p-1},\la_p}(f)-
\Delta^{(p-1)}_{\la_1,\dots,\la_{p-1},\la_{p+1}}(f)}{\la_p-\la_{p+1}}&
\text{ if } \la_p\neq\la_{p+1}\\[2ex]
\frac{\partial}{\partial t}\big|_{t=\la_p}\Delta^{(p-1)}_{\la_1,\dots,\la_{p-1},t}(f)&
\text{ if } \la_p=\la_{p+1}.
\end{cases}
\end{align*}
\end{defi}

Below we state selected facts on the divided difference (see, e.g., \cite{Vore}).

\begin{prop}
\label{prop:dds}
\begin{enumerate}
\item\label{f-la:sim} (See \cite[Section 4.7, (a)]{Vore}.) $\dd{\la_{p+1}}{p}{f}$ is symmetric in $\la_1,\la_2,\dots,\la_{p+1}$.

\item\label{f-la:dd7}(See \cite[Section 4.7, (h)]{Vore}.) If all $\la_1,\la_2,\dots,\la_{p+1}$ are distinct, then
\[\dd{\la_{p+1}}{p}{f}=\sum_{j=1}^{p+1}\frac{f(\la_j)}{\prod_{k\neq j}(\la_j-\la_k)}.\]

\item\label{f-la:dd3} (See \cite[Section 4.7]{Vore}.)
For $f$ a sufficiently smooth function,
\[\dd{\la_{p+1}}{p}{f}=\sum_{i\in
\mathcal{I}}\sum_{j=0}^{m(\la_i)-1}c_{ij}(\la_1,\dots,\la_{p+1})
f^{(j)}(\la_i).\] Here $\mathcal{I}$ is
the set of indices $i$ for which $\la_i$ are distinct, $m(\la_i)$
is the multiplicity of $\la_i$, and $c_{ij}(\la_1,\dots,\la_{p+1})\in\Complex$.

\item\label{f-la:dd2} (See \cite[Section 4.7]{Vore}.)\\
$\dd{\la_{p+1}}{p}{a_p\la^p+a_{p-1}\la^{p-1}+\dots +a_1\la+a_0}=a_p$, where $a_0,a_1,\dots,a_p\in\Complex$.

\item\label{f-la:dd5} (See \cite[Section 5.2, (2.3) and (2.6)]{Vore}.)

The basic spline with the break points
$\la_1,\dots,\la_{p+1}$, where at least two of the values are distinct,
is defined by
\[t\mapsto\begin{cases}
\frac{1}{|\la_2-\la_1|}\chi_{(\min\{\la_1,\la_2\},\max\{\la_1,\la_2\})}(t)&
\text{ if } p=1\\[2ex]
\dd{\la_{p+1}}{p}{(\la-t)^{p-1}_+}& \text{ if } p>1\end{cases}.\]
Here the truncated power is defined by
$x_+^k=\begin{cases}x^k &\text{ if }x\geq 0\\0&\text{ if }x<0,\end{cases}$ for $k\in\Nats$.

The basic spline is non-negative, supported in
\[[\min\{\la_1,\dots,\la_{p+1}\},\max\{\la_1,\dots,\la_{p+1}\}]\] and
integrable with the integral equal to $1/p$. (Often the basic spline is normalized so that its integral equals 1).

\item\label{f-la:dd4} (See \cite[Section 5.2, (2.2) and Section 4.7, (c)]{Vore}.)

For $f\in C^p[\min\{\la_1,\dots,\la_{p+1}\},\max\{\la_1,\dots,\la_{p+1}\}]$,
\begin{align*}&\dd{\la_{p+1}}{p}{f}\\&\quad=
\begin{cases}
\frac{1}{(p-1)!}\int_{-\infty}^\infty
f^{(p)}(t)\dd{\la_{p+1}}{p}{(\la-t)^{p-1}_+}\,dt&\;\text{ if
}\;\exists i_1,i_2\text{ such that } \la_{i_1}\neq \la_{i_2}\\[2ex]
\frac{1}{p!}f^{(p)}(\la_1)&\;\text{ if
}\;\la_1=\la_2=\cdots=\la_{p+1}.
\end{cases}\end{align*}

\item\label{f-la:dd6} (See \cite[Section 4.7, (l)]{Vore}.) Let $f\in C^p[a,b]$. Then, for $\{\la_1,\dots,\la_{p+1}\}\subset[a,b]$,
\[\big|\dd{\la_{p+1}}{p}{f}\big|\leq\frac{1}{p!}\max_{\la\in [a,b]}|f^{(p)}(\la)|.\]
\end{enumerate}
\end{prop}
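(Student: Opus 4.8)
All seven statements are classical, and the plan is to derive them from two pillars --- Newton's divided-difference interpolation and the Peano-kernel (Taylor-remainder) representation --- both obtained by induction on $p$ directly from Definition~\ref{prop:dddef}. First I would prove \eqref{f-la:dd7} for pairwise distinct nodes by induction: the case $p=0$ is the definition, and in the inductive step one substitutes the two order-$(p-1)$ expressions into the first branch of the recursion and combines the resulting partial fractions over the common node set, the algebra collapsing to $\sum_{j=1}^{p+1}\frac{f(\la_j)}{\prod_{k\neq j}(\la_j-\la_k)}$. This expression is visibly invariant under permutations of $\la_1,\dots,\la_{p+1}$, so symmetry \eqref{f-la:sim} holds on the dense set of tuples with distinct entries; to extend it everywhere I would observe that $\dd{\la_{p+1}}{p}{f}$ is jointly continuous in $(\la_1,\dots,\la_{p+1})$ for $f\in C^p$ --- an induction on the recursion, the two branches matching in the limit $\la_{p+1}\to\la_p$ --- and pass to the limit. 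The same continuity, together with a Taylor expansion of $f$ about a repeated node, turns \eqref{f-la:dd7} into \eqref{f-la:dd3}: as a block of nodes coalesces to $\la_i$ with multiplicity $m(\la_i)$, the explicit formula develops, after cancellation, into a combination of $f(\la_i),\dots,f^{(m(\la_i)-1)}(\la_i)$ with node-dependent coefficients (alternatively, the recursion manifestly never produces anything but such combinations).

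Next comes \eqref{f-la:dd2}: by \eqref{f-la:dd7} and linearity it suffices to treat the monomials $\la\mapsto\la^k$ with $0\le k\le p$, and a short induction on $p$ via the recursion gives $\dd{\la_{p+1}}{p}{\la^k}=0$ for $k<p$ and $\dd{\la_{p+1}}{p}{\la^p}=1$ (equivalently: the Newton interpolant of $f$ at $\la_1,\dots,\la_{p+1}$ has leading coefficient $\dd{\la_{p+1}}{p}{f}$, and a polynomial of degree $\le p$ equals its own interpolant). Item \eqref{f-la:dd4} is then the Peano-kernel theorem: writing Taylor's formula with integral remainder about a point $a<\min_i\la_i$,
\[
f(\la)=P(\la)+\frac{1}{(p-1)!}\int_\Reals f^{(p)}(t)\,(\la-t)^{p-1}_+\,dt,\qquad \deg P\le p-1,
\]
and applying $\dd{\la_{p+1}}{p}{\cdot}$ --- which kills $P$ by \eqref{f-la:dd2} and, being a fixed finite combination of point evaluations and derivatives with coefficients independent of $t$, commutes with $\int\,dt$ --- yields the stated representation for non-coincident nodes; the fully coincident case $\la_1=\dots=\la_{p+1}$ is a direct computation, iterating the derivative branch of the recursion to get $\dd{\la_{p+1}}{p}{f}=f^{(p)}(\la_1)/p!$.

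The remaining items follow quickly. In \eqref{f-la:dd5}, the support statement holds because for $t$ below all nodes the function $\la\mapsto(\la-t)^{p-1}_+$ coincides near each $\la_i$ with the polynomial $(\la-t)^{p-1}$ of degree $p-1<p$, so \eqref{f-la:dd2} makes the divided difference vanish, while for $t$ above all nodes it vanishes identically; non-negativity I would obtain either from the Hermite--Genocchi formula $\dd{\la_{p+1}}{p}{g}=\int_{\Sigma}g^{(p)}\big(\textstyle\sum_i s_i\la_i\big)\,ds$ over the standard simplex $\Sigma=\{s\geq 0:\sum_i s_i=1\}$ (itself proved by induction on $p$) applied to a smooth approximant of $(\cdot-t)^{p-1}_+$ whose $(p-1)$-st derivative is non-negative and non-decreasing, or from the $B$-spline recurrence, whose coefficients are non-negative on the relevant support; and the value $1/p$ of the integral drops out of \eqref{f-la:dd4} with $f(\la)=\la^p/p!$, whose left side is $1/p!$ by \eqref{f-la:dd2} while its right side is $\frac{1}{(p-1)!}$ times the integral of the basic spline. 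Finally \eqref{f-la:dd6} is immediate from \eqref{f-la:dd4}, namely $\big|\dd{\la_{p+1}}{p}{f}\big|\le\frac{1}{(p-1)!}\max_{[a,b]}|f^{(p)}|\cdot\frac1p=\frac{1}{p!}\max_{[a,b]}|f^{(p)}|$, using non-negativity and total mass $1/p$ of the basic spline (the coincident case being direct from \eqref{f-la:dd4}).

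The steps that need genuine care are the limiting arguments at coincident nodes --- the joint continuity of $\dd{\la_{p+1}}{p}{f}$ in the nodes, the verification that the distinct-node formula \eqref{f-la:dd7} develops in the limit into the derivative-combination of \eqref{f-la:dd3} consistently with the derivative branch of the recursion, and the direct computation of the fully coincident value $\dd{\la_{p+1}}{p}{f}=f^{(p)}(\la_1)/p!$ --- together with the non-negativity in \eqref{f-la:dd5}, whose clean proof goes through Hermite--Genocchi (or the $B$-spline recurrence) and hence requires an honest treatment of the smoothing of the truncated power $(\cdot-t)^{p-1}_+$ at nodes that happen to equal $t$. Everything else is bookkeeping with the recursion and with Taylor's formula, and since all of this is standard I would in the text simply cite \cite{Vore} and record only these reductions.
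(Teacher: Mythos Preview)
The paper does not prove this proposition at all: each of the seven items is simply stated with a citation to the relevant section of \cite{Vore}, with no argument given. Your sketch is correct and supplies exactly the standard arguments one would find there --- the partial-fraction induction for \eqref{f-la:dd7}, continuity in the nodes to pass to the confluent case, Newton interpolation for \eqref{f-la:dd2}, the Peano-kernel/Taylor-remainder derivation of \eqref{f-la:dd4}, and the deduction of \eqref{f-la:dd5}--\eqref{f-la:dd6} from these --- so there is nothing to compare beyond noting that you have filled in what the paper deliberately leaves to the reference.
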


Below we state useful properties of the divided difference to be
used in the paper.

\begin{lemma} \label{f-la:dd1} For $z\in\Complex$, with $\im(z)\neq 0$,
\[\dd{\la_{p+1}}{p}{\frac{1}{z-\la}}=\prod_{j=1}^{p+1}\frac{1}{z-\la_j},\]
where the divided difference is taken with respect to the real variable $\la$.
\end{lemma}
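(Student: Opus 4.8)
The plan is to prove the formula $\dd{\la_{p+1}}{p}{f_z}=\prod_{j=1}^{p+1}\frac{1}{z-\la_j}$ by induction on $p$, following directly from the recursive definition of the divided difference in Definition \ref{prop:dddef}. The base case $p=0$ is immediate: $\Delta^{(0)}_{\la_1}(f_z)=f_z(\la_1)=\frac{1}{z-\la_1}$.

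For the inductive step, assume the claim holds for $p-1$, i.e. $\Delta^{(p-1)}_{\la_1,\dots,\la_p}(f_z)=\prod_{j=1}^{p}\frac{1}{z-\la_j}$ for all choices of the $\la_j$. I first treat the generic case $\la_p\neq\la_{p+1}$. By definition,
\[
\dd{\la_{p+1}}{p}{f_z}=\frac{\Delta^{(p-1)}_{\la_1,\dots,\la_{p-1},\la_p}(f_z)-\Delta^{(p-1)}_{\la_1,\dots,\la_{p-1},\la_{p+1}}(f_z)}{\la_p-\la_{p+1}}
=\frac{1}{\la_p-\la_{p+1}}\left(\prod_{j=1}^{p-1}\frac{1}{z-\la_j}\right)\left(\frac{1}{z-\la_p}-\frac{1}{z-\la_{p+1}}\right),
\]
and the elementary identity $\frac{1}{z-\la_p}-\frac{1}{z-\la_{p+1}}=\frac{\la_p-\la_{p+1}}{(z-\la_p)(z-\la_{p+1})}$ makes the factor $\la_p-\la_{p+1}$ cancel, yielding $\prod_{j=1}^{p+1}\frac{1}{z-\la_j}$ as desired. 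For the coincident case $\la_p=\la_{p+1}$, I use the second branch of the definition: $\dd{\la_{p+1}}{p}{f_z}=\frac{\partial}{\partial t}\big|_{t=\la_p}\Delta^{(p-1)}_{\la_1,\dots,\la_{p-1},t}(f_z)=\frac{\partial}{\partial t}\big|_{t=\la_p}\left(\prod_{j=1}^{p-1}\frac{1}{z-\la_j}\right)\frac{1}{z-t}=\left(\prod_{j=1}^{p-1}\frac{1}{z-\la_j}\right)\frac{1}{(z-t)^2}\big|_{t=\la_p}$, which again equals $\prod_{j=1}^{p+1}\frac{1}{z-\la_j}$.

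There is essentially no hard part here; the only point requiring a word of care is that the divided difference as defined is a priori only specified by peeling off the \emph{last} argument, so to run the induction cleanly one should note that Proposition \ref{prop:dds}\eqref{f-la:sim} guarantees symmetry, so the inductive hypothesis may be applied with $\la_p$ or $\la_{p+1}$ playing the role of the last variable interchangeably, and in particular the coincident-case computation is consistent with a limiting value of the distinct case. Alternatively, one can bypass symmetry entirely by observing that since $f_z\in C^\infty$, the divided difference depends continuously on $(\la_1,\dots,\la_{p+1})$, so the formula on the dense open set where all arguments are distinct propagates to all of $\Reals^{p+1}$ by continuity; this also recovers the coincident case without a separate computation. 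I would present the straightforward induction as the main argument and remark on the continuity alternative.
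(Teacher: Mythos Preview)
Your proof is correct and follows the same approach as the paper's: induction via the recursive Definition~\ref{prop:dddef}, handling the distinct and coincident cases separately (the paper just writes out $p=1$ explicitly and then says ``the rest of the proof is accomplished by induction''). Your caveat about needing symmetry is not actually necessary, since the inductive hypothesis is stated for all $p$-tuples and the recursion only ever peels off the last variable, so the argument goes through directly.
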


\begin{proof}
We notice that by Definition \ref{prop:dddef},
\begin{align*}
&\De_{\la_1,\la_2}^{(1)}\left(\frac{1}{z-\la}\right)=
\begin{cases}
\left(\frac{1}{z-\la_1}-\frac{1}{z-\la_2}\right)
\frac{1}{\la_1-\la_2}=
\frac{1}{(z-\la_1)(z-\la_2)}&\text{ if }\la_1\neq \la_2\\[2ex]
\frac{\partial}{\partial t}\big|_{t=\la_1}\left(\frac{1}{z-t}\right)=
\frac{1}{(z-\la_1)^2}=\frac{1}{(z-\la_1)(z-\la_2)}&\text{ if }\la_1=\la_2.
\end{cases}
\end{align*}By repeating the same argument, we obtain
\[\De_{\la_1,\la_2,\la_3}^{(2)}
\left(\frac{1}{z-\la}\right)=\frac{1}{(z-\la_1)(z-\la_2)(z-\la_3)}.\]
The rest of the proof is accomplished by induction.
\end{proof}

\begin{lemma}\label{prop:dd_der}
Let $D$ be a domain in $\Complex$ and $f$ a function continuously differentiable sufficiently many times on $D\times\Reals$.
Then for $p\in\Nats$,\\
(i)\[\int\dd{\la_{p+1}}{p}{f(z,\la)}\,dz=\dd{\la_{p+1}}{p}{\int f(z,\la)\,dz},\] with an appropriate choice of the constant of integration on the left-hand side;\\
(ii)
\[\lim_{z\rightarrow z_0}\dd{\la_{p+1}}{p}{f(z,\la)}=\dd{\la_{p+1}}{p}{\lim_{z\rightarrow z_0} f(z,\la)},\quad z_0\in D;\]
(iii)\[\frac{\partial}{\partial
z}\left[\dd{\la_{p+1}}{p}{f(z,\la)}\right]=\dd{\la_{p+1}}{p}{\frac{\partial}{\partial
z}f(z,\la)},\] where the divided difference is taken with respect to the variable $\la$.
\end{lemma}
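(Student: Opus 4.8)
The plan is to prove all three statements by the same mechanism: the divided difference of order $p$, when restricted to functions that are smooth on $D\times\Reals$, can be written (Proposition \ref{prop:dds}\eqref{f-la:dd3}) as a \emph{finite} linear combination $\sum_{i\in\mathcal{I}}\sum_{j}c_{ij}(\la_1,\dots,\la_{p+1})\,\partial_\la^j f(z,\la_i)$ whose coefficients $c_{ij}$ depend only on $\la_1,\dots,\la_{p+1}$ and not on $z$. Once this representation is in hand, each of (i)--(iii) reduces to the corresponding elementary fact for the finitely many scalar functions $z\mapsto\partial_\la^j f(z,\la_i)$: integrating, taking limits, or differentiating in $z$ a finite sum commutes with the sum, and for each term it commutes with the $\la$-operations because $c_{ij}$ is $z$-independent. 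So I would first record that reduction as the conceptual heart of the argument.

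For (iii), I would differentiate the identity of Proposition \ref{prop:dds}\eqref{f-la:dd3} in $z$: since the sum is finite and each $c_{ij}$ is constant in $z$, $\frac{\partial}{\partial z}\dd{\la_{p+1}}{p}{f(z,\la)}=\sum_{i,j}c_{ij}\,\frac{\partial}{\partial z}\partial_\la^j f(z,\la_i)$. Then I must identify the right-hand side as $\dd{\la_{p+1}}{p}{\frac{\partial}{\partial z}f(z,\la)}$; this follows because the divided difference of $\frac{\partial}{\partial z}f(z,\cdot)$ is, by the same Proposition \ref{prop:dds}\eqref{f-la:dd3} applied to that function, $\sum_{i,j}c_{ij}\,\partial_\la^j\big(\frac{\partial}{\partial z}f(z,\la_i)\big)$, and mixed partials commute under the smoothness hypothesis on $D\times\Reals$. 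Statement (ii) is handled identically, replacing $\frac{\partial}{\partial z}$ by $\lim_{z\to z_0}$: the limit passes through the finite sum, through each $c_{ij}$ (which does not involve $z$), and through $\partial_\la^j$ by continuity of the partials. Statement (i) is the same with $\int(\cdot)\,dz$ in place of the limit; here one must remark that antiderivatives are determined only up to a constant, and choosing the constant of integration on the left appropriately (e.g.\ fixing the value of the antiderivative of each $\partial_\la^j f(\cdot,\la_i)$ at a common base point) makes the two sides agree — this is exactly the clause "with an appropriate choice of the constant of integration" in the statement.

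An alternative, perhaps cleaner, route for (i)--(iii) is to induct on $p$ directly from the recursive Definition \ref{prop:dddef}, checking the base case $p=0$ trivially (the divided difference is just evaluation $f(z,\la_1)$) and then, in the inductive step, observing that for $\la_p\neq\la_{p+1}$ the order-$p$ divided difference is a difference quotient $\big(\Delta^{(p-1)}_{\dots,\la_p}-\Delta^{(p-1)}_{\dots,\la_{p+1}}\big)/(\la_p-\la_{p+1})$, and both $\int(\cdot)\,dz$, $\lim_{z\to z_0}$, and $\partial/\partial z$ commute with a fixed $\la$-difference quotient and with the $\la$-derivative that appears in the confluent case $\la_p=\la_{p+1}$. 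One must check continuity of $\dd{\la_{p+1}}{p}{f(z,\la)}$ as a function of $(\la_1,\dots,\la_{p+1})$ to pass between the two branches of the definition, but this is part of the standard divided-difference facts. I would likely present the proof via Proposition \ref{prop:dds}\eqref{f-la:dd3}, since it makes the $z$-independence of the coefficients explicit and dispatches all three parts uniformly.

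The main obstacle is bookkeeping rather than depth: one must be careful that the smoothness hypothesis "$f$ continuously differentiable sufficiently many times on $D\times\Reals$" is strong enough to (a) invoke Proposition \ref{prop:dds}\eqref{f-la:dd3}, which needs $f(z,\cdot)\in C^p$ near the $\la_i$, and (b) justify interchanging $\frac{\partial}{\partial z}$ with $\partial_\la^j$ — i.e.\ joint $C^{p+1}$ (or better) regularity on $D\times\Reals$ so Clairaut's theorem applies. For (i) one additionally needs local uniform integrability in $z$ to differentiate the antiderivative, which the stated smoothness and a routine compactness argument on $D$ supply. Modulo making this regularity count precise, the proof is a short verification.
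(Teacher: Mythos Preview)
Your proposal is correct and takes essentially the same approach as the paper, which simply notes that the lemma ``follows immediately from Proposition \ref{prop:dds} \eqref{f-la:dd3}.'' Your elaboration of why that representation (a finite $z$-independent linear combination of $\partial_\la^j f(z,\la_i)$) suffices is exactly what the paper leaves implicit.
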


\begin{proof}
Follows immediately from Proposition \ref{prop:dds} \eqref{f-la:dd3}.
\end{proof}

\begin{cor}\label{prop:dd_rat} For $p,k\in\Nats$,
\[\frac{(-1)^k}{k!}\frac{\partial^k}{\partial z^k}\left(\prod_{j=1}^{p+1}\frac{1}{z-\la_j}\right)
=\dd{\la_{p+1}}{p}{\frac{1}{(z-\la)^{k+1}}}.\]
\end{cor}

\begin{proof}
Follows immediately from Lemma \ref{f-la:dd1} and Lemma \ref{prop:dd_der}.
\end{proof}

\section{Remainders of Taylor-type approximations}
\label{sec:r}

In this section, we collect technical facts on derivatives of operator functions and remainders of the Taylor-type approximations.

The following lemma is routine.

\begin{lemma}\label{prop:res'}Let $H_0=H_0^*$ be an operator in $\Hcal$ and $V=V^*\in\BH$. Let $H_x=H_0+xV$, with $x\in\Reals$. Then,
\[\frac{d^p}{dx^p}\big((zI-H_x)^{-k}\big)=p!\sum_{\substack{1\leq k_0,k_1,\dots,k_p\leq k\\k_0+k_1+\dots +k_p=k+p}}(zI-H_x)^{-k_0}V(zI-H_x)^{-k_1}V\dots V(zI-H_x)^{-k_p}.\]
If, in addition, $H_0$ is bounded, then
\[\frac{d^p}{dx^p}\big(H_x^{k}\big)=p!\sum_{\substack{0\leq k_0,k_1,\dots,k_p\\k_0+k_1+\dots +k_p=k-p}}H_x^{k_0}VH_x^{k_1}V\dots VH_x^{k_p},\quad p\leq k.\]
\end{lemma}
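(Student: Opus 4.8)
The statement to prove is Lemma~\ref{prop:res'}, which gives formulas for the $p$th derivative in $x$ of the resolvent power $(zI-H_x)^{-k}$ and (when $H_0$ is bounded) of the power $H_x^k$, where $H_x = H_0 + xV$.

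\medskip

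The plan is to prove both formulas by induction on $p$. First I would establish the base case $p=1$: for the resolvent, differentiating the identity $(zI-H_x)(zI-H_x)^{-1} = I$ in $x$ gives $\frac{d}{dx}(zI-H_x)^{-1} = (zI-H_x)^{-1}V(zI-H_x)^{-1}$, and then for a general power $(zI-H_x)^{-k}$ one applies the Leibniz rule to the product of $k$ copies of the resolvent, producing exactly the sum over $(k_0,\dots,k_1)$ with one index equal to $2$ and the rest equal to $1$ (i.e.\ $k_0+k_1=k+1$); the factor $1! = 1$ matches. For the polynomial case, $\frac{d}{dx}H_x^k = \sum_{j} H_x^{j} V H_x^{k-1-j}$, again directly from the Leibniz rule on the product of $k$ copies of $H_x$, matching the claimed sum with exponents summing to $k-1$.

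\medskip

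For the inductive step (resolvent case), assume the formula holds for $p$ and differentiate once more in $x$. Each summand is a product of $p+1$ resolvent factors $(zI-H_x)^{-k_i}$ interlaced with $p$ copies of $V$; differentiating by the Leibniz rule, the derivative hits each resolvent factor $(zI-H_x)^{-k_i}$ in turn, and using $\frac{d}{dx}(zI-H_x)^{-k_i} = k_i\,(zI-H_x)^{-a}V(zI-H_x)^{-b}$ with $a+b = k_i+1$, $a,b\ge 1$ (itself a consequence of the $p=1$ analysis applied to a single power), one obtains products of $p+2$ resolvent factors and $p+1$ copies of $V$. I would then check that summing over all these contributions reproduces $(p+1)!$ times the sum over all tuples $(k_0,\dots,k_{p+1})$ with $1\le k_i\le k$ and $\sum k_i = k+(p+1)$. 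The combinatorial heart is a counting identity: each such tuple of length $p+2$ arises from several tuples of length $p+1$ (by merging two adjacent entries $k_i, k_{i+1}$ into one entry $k_i+k_{i+1}-1$), and the multiplicities, weighted by the factor $p!$ from induction and the factors $k_i$ from differentiation, telescope to $(p+1)!$. The polynomial case is entirely analogous, with the constraint $0\le k_i$ and $\sum k_i = k-p$, and one must additionally track the range $p\le k$ (for $p>k$ the derivative vanishes, consistent with the empty sum); the only subtlety is that differentiating a factor $H_x^{k_i}$ with $k_i=0$ gives zero, which correctly excludes it.

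\medskip

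I expect the main obstacle to be the bookkeeping in the inductive step — verifying that the numerical coefficient of each term in the expanded derivative is exactly the claimed factorial. A clean way to organize this is to observe that the derivative $\frac{d}{dx}(zI-H_x)^{-k}$ can be written uniformly as $\sum_{a+b=k+1,\,a,b\ge1}(zI-H_x)^{-a}V(zI-H_x)^{-b}$ (with all coefficients $1$, not $k_i$), by first expanding $(zI-H_x)^{-k}$ as a product of $k$ resolvents and applying Leibniz; then differentiating $p$ times amounts to choosing, at each of the $p$ steps, one of the currently-present resolvent factors to split, and the number of ways to reach a given final interlaced word of resolvents and $V$'s through an ordered sequence of $p$ splits is exactly $p!$ — because the $p$ copies of $V$ in the final word can be inserted in any order, and each order corresponds to exactly one splitting history. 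This ``ordered splitting'' viewpoint makes the $p!$ (resp.\ $(p+1)!$) transparent and reduces the proof to the $p=1$ computation plus this elementary observation about words. The same argument, with ``resolvent'' replaced by ``copy of $H_x$'' and the degree bookkeeping $k-p$ in place of $k+p$, handles the polynomial identity.
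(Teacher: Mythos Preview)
Your proposal is correct; the paper simply declares the lemma ``routine'' and gives no proof, so there is nothing to compare against beyond noting that your ordered-splitting argument is exactly the kind of straightforward induction the authors had in mind. One small wrinkle: your first pass at the inductive step writes $\frac{d}{dx}(zI-H_x)^{-k_i} = k_i\,(zI-H_x)^{-a}V(zI-H_x)^{-b}$, which is not right (it is a \emph{sum} over $a+b=k_i+1$, not $k_i$ times one term), but you immediately abandon this for the clean ``each of the $p!$ orderings of the inserted $V$'s gives a distinct splitting history'' argument, which is correct and handles both the resolvent and polynomial cases uniformly.
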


\begin{lemma}\label{prop:d=} Let $H_0=H_0^*$ be an operator affiliated with $\Mcal$ and $V=V^*\in\ncs{2}$. Then,
\begin{align}\label{f-la:d=}
\frac{(-1)^k}{k!}\frac{d^k}{dz^k}\tau\big[(zI-H_0)^{-1}V(zI-H_0)^{-1}V(zI-H_0)^{-1}\big]
\\\nonumber=\frac12\tau\left[\frac{d^2}{dx^2}\bigg|_{x=0}\big((zI-H_0-xV)^{-k-1}\big)\right]
\end{align}
\end{lemma}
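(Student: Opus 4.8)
The plan is to compute both sides of \eqref{f-la:d=} and check that they agree, using Lemma~\ref{prop:res'} to expand the $x$-derivatives on the right and Lemma~\ref{prop:d=}'s own $z$-derivatives on the left, then matching terms after applying the trace. First I would apply Lemma~\ref{prop:res'} with $k$ replaced by $k+1$ and $p=2$ to the resolvent power $(zI-H_x)^{-(k+1)}$, so that $\frac{d^2}{dx^2}\big|_{x=0}(zI-H_0-xV)^{-(k+1)}$ becomes $2!$ times a sum over triples $(k_0,k_1,k_2)$ with $1\le k_i\le k+1$ and $k_0+k_1+k_2=k+3$ of terms $(zI-H_0)^{-k_0}V(zI-H_0)^{-k_1}V(zI-H_0)^{-k_2}$. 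Taking the trace and dividing by $2$ cancels the factorial, so the right-hand side of \eqref{f-la:d=} equals $\sum \tau\big[(zI-H_0)^{-k_0}V(zI-H_0)^{-k_1}V(zI-H_0)^{-k_2}\big]$ over that index set.

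Next I would compute the left-hand side. Writing $R=R(z)=(zI-H_0)^{-1}$, we have $\frac{d}{dz}R = -R^2$, and more generally $\frac{1}{m!}\frac{d^m}{dz^m}R = (-1)^m R^{m+1}$. Differentiating $\tau[R V R V R]$ $k$ times in $z$ via the Leibniz rule distributes the $k$ derivatives among the three resolvent factors: $\frac{d^k}{dz^k}\tau[RVRVR] = \sum_{j_0+j_1+j_2=k}\binom{k}{j_0,j_1,j_2}\tau\big[(\tfrac{d^{j_0}}{dz^{j_0}}R)V(\tfrac{d^{j_1}}{dz^{j_1}}R)V(\tfrac{d^{j_2}}{dz^{j_2}}R)\big]$. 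Using $\frac{d^{j_i}}{dz^{j_i}}R = (-1)^{j_i} j_i! R^{j_i+1}$ and collecting signs, the factor $\frac{(-1)^k}{k!}$ in front turns the multinomial coefficient $\frac{k!}{j_0!j_1!j_2!}$ together with the $j_i!$'s into a coefficient of $1$, and the total sign $(-1)^k(-1)^{j_0+j_1+j_2}=(-1)^{2k}=1$. So the left-hand side equals $\sum_{j_0+j_1+j_2=k}\tau\big[R^{j_0+1}VR^{j_1+1}VR^{j_2+1}\big]$. Setting $k_i=j_i+1$ turns the constraint $j_0+j_1+j_2=k$, $j_i\ge 0$ into $k_0+k_1+k_2=k+3$, $k_i\ge 1$; moreover each $j_i\le k$ forces $k_i\le k+1$, matching exactly the index set from the right-hand side.

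The two expressions are then term-by-term identical, which proves the identity. The main technical care needed is justifying the interchanges of differentiation (in $z$ and in $x$) with the trace; here I would invoke that $V\in\ncs 2$ makes each product $R^{a}VR^{b}VR^{c}$ trace-class (a product of two Hilbert--Schmidt operators sandwiched by bounded ones, using $\tau(|V|^2)<\infty$), with norms controlled locally uniformly in $z$ away from the real axis, so that the Leibniz rule, the convergence of difference quotients, and the use of Lemma~\ref{prop:res'} are all legitimate; this is the ``routine'' justification the paper alludes to for Lemma~\ref{prop:res'} and is the only place where real estimation is needed. The combinatorial bookkeeping of the two index sets is the step I would expect a careful reader to want spelled out, so I would present that matching explicitly; everything else is mechanical.
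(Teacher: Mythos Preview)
Your proof is correct and in fact slightly cleaner than the paper's. The paper does not invoke Lemma~\ref{prop:res'} for the right-hand side; instead it computes $\frac{d}{dx}$ and $\frac{d^2}{dx^2}$ of $(zI-H_x)^{-(k+1)}$ from scratch, obtaining a double sum, and then uses cyclicity of the trace to collapse it to $\sum_{i=0}^k(i+1)\tau[(zI-H_0)^{-2-i}V(zI-H_0)^{-1-(k-i)}V]$. Similarly, on the left-hand side the paper first uses cyclicity to rewrite $\tau[RVRVR]=\tau[R^2VRV]$ and only then applies Leibniz in $z$, arriving at $\sum_{j=0}^k(j+1)\tau[(zI-H_0)^{-2-j}V(zI-H_0)^{-1-(k-j)}V]$; the match then requires a change of summation index. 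Your approach keeps three resolvent factors throughout, applies Leibniz directly to the triple product, and reparametrizes $k_i=j_i+1$ to land exactly on the index set produced by Lemma~\ref{prop:res'}, giving a term-by-term identity without any cyclicity manipulation. This buys you a shorter, more transparent argument; the paper's version, by contrast, makes the counting slightly more explicit at the cost of two cyclicity steps and an index relabeling. One minor remark: the observation that $j_i\le k$ forces $k_i\le k+1$ is redundant, since $k_0+k_1+k_2=k+3$ with $k_i\ge1$ already gives $k_i\le k+1$; it does no harm, but you can drop it.
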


\begin{proof}Firstly, we compute the left-hand side of \eqref{f-la:d=}.
By cyclicity of the trace,
\[\tau\big[(zI-H_0)^{-1}V(zI-H_0)^{-1}V(zI-H_0)^{-1}\big]=
\tau\big[(zI-H_0)^{-2}V(zI-H_0)^{-1}V\big].\]
By continuity of the trace in the norm $\norm{\cdot}_{1,\infty}$,
\begin{align*}
&\frac{d}{dz}\tau\big[(zI-H_0)^{-2}V(zI-H_0)^{-1}V\big]\\
&\quad=\tau\left[\frac{d}{dz}\left((zI-H_0)^{-2}V(zI-H_0)^{-1}V\right)\right].
\end{align*}
It is easy to see that
\begin{align}\label{f-la:c1}\nonumber
&\frac{d^k}{dz^k}\left((zI-H_0)^{-2}V(zI-H_0)^{-1}V\right)\\\nonumber
&=\sum_{j=0}^k\frac{k!}{j!(k-j)!}(-1)^j(j+1)!(zI-H_0)^{-2-j}V
(-1)^{k-j}(k-j)!(zI-H_0)^{-1-(k-j)}V\\
&=(-1)^k k!\sum_{j=0}^k(j+1)(zI-H_0)^{-2-j}V(zI-H_0)^{-1-(k-j)}V.
\end{align}
Now we compute the right-hand side of \eqref{f-la:d=}. Let $H_x=H_0+xV$.
It is routine to see that
\begin{align*}
\frac{d}{dx}\big((zI-H_x)^{-(k+1)}\big)=\sum_{i=1}^{k+1}(zI-H_x)^{-i}V(zI-H_x)^{-(k+2-i)},
\end{align*} and hence,
\begin{align*}
&\frac{d^2}{dx^2}\bigg|_{x=0}\big((zI-H_x)^{-(k+1)}\big)\\
&\quad=2\sum_{i=1}^{k+1}\frac{d}{dx}\bigg|_{x=0}\big((zI-H_x)^{-i}\big)V(zI-H_0)^{-(k+2-i)}\\
&\quad=2\sum_{i=1}^{k+1}\sum_{j=0}^{i-1}(zI-H_0)^{-(i-j)}V(zI-H_0)^{-1-j}V(zI-H_0)^{-(k+2-i)}.
\end{align*}Multiplying by $1/2$ and evaluating the trace in the latter expression provides
\begin{align*}
&\frac12\tau\left[\frac{d^2}{dx^2}\bigg|_{x=0}\big((zI-H_0-xV)^{-k-1}\big)\right]\\
&\quad=\sum_{i=1}^{k+1}\sum_{j=0}^{i-1}\tau\big[(zI-H_0)^{-1-j}V(zI-H_0)^{-(k+2-j)}V\big]\\
&\quad=\sum_{j=0}^{k}\sum_{i=j}^{k+1}\tau\big[(zI-H_0)^{-1-j}V(zI-H_0)^{-2-(k-j)}V\big]\\
&\quad=\sum_{j=0}^{k}(k+1-j)\tau\big[(zI-H_0)^{-1-j}V(zI-H_0)^{-2-(k-j)}V\big].
\end{align*}
By changing the index of summation $i=k-j$ in the latter expression and by cyclicity of the trace, we obtain
\begin{align}\label{f-la:c2}
\sum_{i=0}^{k}(i+1)\tau\big[(zI-H_0)^{-2-i}V(zI-H_0)^{-1-(k-i)}V\big]
\end{align}Comparing \eqref{f-la:c1} and \eqref{f-la:c2} completes the proof of the lemma.
\end{proof}

As a particular case of results of \cite{PellerMult} we have the lemma below.

\begin{lemma}\label{prop:dd_as'}Let $H_0=H_0^*$ be an operator in $\Hcal$ and $V=V^*\in\BH$. Denote $H_x=H_0+xV$. For $f\in\Rfr_b$,
\begin{align}\label{f-la:dd_as'}&\frac{d^p}{dx^p}f(H_0+xV)\\\nonumber&\quad=
p!\int_\Reals\int_\Reals\dots\int_\Reals\dd{\la_{p+1}}{p}{f}\,E_{H_x}(d\la_1)VE_{H_x}(d\la_2)V\dots VE_{H_x}(d\la_{p+1}).
\end{align} If, in addition, $H_0$ is bounded, then \eqref{f-la:dd_as'} holds for $f\in\Rfr$.
\end{lemma}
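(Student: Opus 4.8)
The plan is to reduce \eqref{f-la:dd_as'} to its ``atoms'' under partial fractions. Any $f\in\Rfr_b$ is a finite linear combination of a constant and powers $(f_{z})^{k}$ with $z\in\Complex\setminus\Reals$ and $k\in\Nats$; when $H_0$ is bounded and $f\in\Rfr$ one also allows a polynomial summand. Both sides of \eqref{f-la:dd_as'} are linear in $f$ (the right-hand side by linearity of the divided difference and of the multiple operator integral), and both vanish on constants for $p\geq 1$, so it suffices to check the building blocks. Throughout, for $V\in\BH$ and $z\notin\Reals$ the multiple operator integrals that occur have kernels that are finite sums of products $\prod_j g_j(\la_j)$ of resolvent powers (and monomials, in the polynomial case), for which the integral is by definition the corresponding finite sum of products $g_1(H_x)Vg_2(H_x)V\cdots Vg_{p+1}(H_x)$; this is all the machinery of \cite{PellerMult} that is needed for rational $f$.

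First I would settle the resolvent $f=f_z$, which carries the real content. On the left, Lemma \ref{prop:res'} with $k=1$ forces $k_0=\dots=k_p=1$, giving the single term
\[\frac{d^p}{dx^p}\big((zI-H_x)^{-1}\big)=p!\,(zI-H_x)^{-1}V(zI-H_x)^{-1}V\cdots V(zI-H_x)^{-1}\]
with $p+1$ resolvents. On the right, Lemma \ref{f-la:dd1} gives $\dd{\la_{p+1}}{p}{f_z}=\prod_{j=1}^{p+1}(z-\la_j)^{-1}$, so the multiple operator integral is
\[p!\Big(\int\frac{E_{H_x}(d\la_1)}{z-\la_1}\Big)V\cdots V\Big(\int\frac{E_{H_x}(d\la_{p+1})}{z-\la_{p+1}}\Big)=p!\,(zI-H_x)^{-1}V\cdots V(zI-H_x)^{-1},\]
and the two expressions agree.

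Next I would bootstrap from $f_z$ to $(f_z)^{k+1}=\frac{1}{(z-\la)^{k+1}}=\frac{(-1)^k}{k!}\,\partial_z^k f_z(\la)$ by differentiating the identity just proved $k$ times in $z$. For bounded $V$ and $z$ off the real axis, $(x,z)\mapsto(zI-H_x)^{-1}$ is jointly real-analytic into $\BH$ in operator norm, which licenses interchanging $\partial_z^k$ with $\frac{d^p}{dx^p}$ on the left; on the right $\partial_z^k$ passes through the divided difference by Lemma \ref{prop:dd_der}(iii), and through the integral because, by Corollary \ref{prop:dd_rat}, $\dd{\la_{p+1}}{p}{(f_z)^{k+1}}$ and its intermediate $z$-derivatives stay finite sums of product kernels, so the integral is literally a finite sum of products of resolvent powers interleaved with $V$. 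The resulting formula for $(f_z)^{k+1}$ then matches Lemma \ref{prop:res'} with general $k$ term by term. When $H_0$ is bounded, I would dispose of the polynomial summand $f(\la)=\la^n$ directly: for $p>n$ both sides are $0$, while for $p\leq n$ the left side is the second formula of Lemma \ref{prop:res'} and $\dd{\la_{p+1}}{p}{\la^n}$ is the complete homogeneous symmetric polynomial $\sum_{k_0+\dots+k_p=n-p}\la_1^{k_0}\cdots\la_{p+1}^{k_p}$ (cf. Proposition \ref{prop:dds} \eqref{f-la:dd2}), a finite sum of product kernels whose integral is $\sum H_x^{k_0}VH_x^{k_1}V\cdots VH_x^{k_p}$. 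Summing over the partial-fraction terms finishes the proof; for unbounded $H_0$ only the $f_z$ and $(f_z)^{k+1}$ blocks (with bounded coefficients) occur, yielding \eqref{f-la:dd_as'} for $f\in\Rfr_b$.

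The hard part is not conceptual: it is the bookkeeping around the two interchanges --- $\partial_z$ with $\partial_x$, and $\partial_z$ with the operator integral --- together with pinning down the (elementary, product-kernel) meaning of the multiple operator integral for exactly the kernels that arise. Both interchanges are routine consequences of the operator-norm analyticity of $(zI-H_x)^{-1}$ in $(x,z)$ for bounded $V$. The genuine point is the base case $f=f_z$, where Lemma \ref{prop:res'} and Lemma \ref{f-la:dd1} dovetail precisely.
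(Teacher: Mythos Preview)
Your argument is correct. The paper does not actually prove this lemma; it simply states that the result is ``a particular case of results of \cite{PellerMult}'' and cites Peller's general theory of multiple operator integrals and higher operator derivatives for functions in Besov classes. Your proof, by contrast, is a direct and self-contained verification tailored to rational $f$: partial fractions reduce the claim to the atoms $f_z$, $(f_z)^{k+1}$, and (when $H_0$ is bounded) monomials; for each atom the divided difference factors as a finite sum of product kernels, so the multiple operator integral is just a finite sum of products of resolvent powers (or powers of $H_x$) interleaved with $V$, and these match the explicit formulas of Lemma~\ref{prop:res'}. This is exactly the elementary route, and it avoids importing the Besov-class machinery of \cite{PellerMult}, at the cost of being specific to $\Rfr$ rather than general. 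The two interchanges you flag (of $\partial_z$ with $\partial_x^p$, and of $\partial_z$ with the integral) are indeed routine here because all kernels reduce to finite sums of products and $(zI-H_x)^{-1}$ is jointly norm-analytic; the identity $\dd{\la_{p+1}}{p}{\la^n}=\sum_{k_0+\dots+k_p=n-p}\la_1^{k_0}\cdots\la_{p+1}^{k_p}$ is standard (it follows, e.g., by induction from Definition~\ref{prop:dddef}) even though Proposition~\ref{prop:dds}\eqref{f-la:dd2} only records the top-degree case.
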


\begin{remark}
It was proved in \cite[Theorem 2.2]{Daletskii} that for $H_0$ a bounded operator, $\frac{d^p}{dt^p}f(H_0+tV)$ is defined when $f\in C^{2p}(\Reals)$ and the derivative can be computed as an iterated operator integral
\eqref{f-la:dd_as'}. It was proved later in \cite{PellerMult} that the G\^ateaux derivative $\frac{d^p}{dt^p}f(H_0+tV)$ is defined for $f$ in the intersection of the Besov classes $B^p_{\infty 1}(\Reals)\cap B^1_{\infty 1}(\Reals)$ and can be computed as a Bochner-type multiple operator integral.
\end{remark}

The following lemma is a straightforward consequence of Lemma \ref{prop:res'}.

\begin{lemma}\label{prop:pol}
Let $H_0=H_0^*$ be an operator in $\Hcal$ and $V=V^*\in\BH$. Then for $f$ a
polynomial of degree $m$,
\[R_{p,H_0,V}(f)=\sum_{\substack{k_0,k_1\dots,k_p\geq 0\\ k_0+k_1+\dots +k_p=m-p}}a_{k_0,k_1,\dots,k_p}
H_0^{k_0}VH_0^{k_1}V\dots VH_0^{k_p},\] with $a_{k_0,k_1,\dots,k_p}$
numbers.
\end{lemma}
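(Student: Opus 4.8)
The plan is to reduce everything to Lemma \ref{prop:res'}. Write $f(\la)=\sum_{k=0}^m b_k\la^k$. By linearity of $R_{p,H_0,V}$ in $f$, it suffices to treat the monomial $f(\la)=\la^k$ for each $k$; moreover, since the $j$th derivative $\frac{d^j}{dt^j}\big|_{t=0}f(H_0+tV)$ is identically zero for $j>m$ when $f$ has degree $m$, the terms with $k<p$ contribute nothing to $R_{p,H_0,V}(f)$, so we may assume $k\geq p$. Thus the whole statement follows once we establish, for $k\geq p$,
\[
R_{p,H_0,V}(\la^k)=\sum_{\substack{k_0,\dots,k_p\geq 0\\ k_0+\dots+k_p=k-p}} c_{k_0,\dots,k_p}\, H_0^{k_0}VH_0^{k_1}V\cdots VH_0^{k_p}
\]
with scalar coefficients $c_{k_0,\dots,k_p}$ (which may of course depend on $k$ and $p$), and then collecting over $k$ gives the $a$'s.

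First I would recall from \eqref{f-la:rem} that $R_{p,H_0,V}(f)=f(H_0+V)-\sum_{j=0}^{p-1}\frac1{j!}\frac{d^j}{dt^j}\big|_{t=0}f(H_0+tV)$. Apply this with $f(\la)=\la^k$: the term $f(H_0+V)=(H_0+V)^k$ expands by noncommutative multinomial expansion into a sum of monomials $H_0^{i_0}VH_0^{i_1}V\cdots VH_0^{i_r}$ over all $r\geq 0$ and all nonnegative exponents with $i_0+\dots+i_r+r=k$, each with coefficient $1$. By Lemma \ref{prop:res'} (second display, with $H_x=H_0+xV$ and evaluated at $x=0$, taking $k_0,\dots,k_j\geq 0$ summing to $k-j$), the term $\frac1{j!}\frac{d^j}{dt^j}\big|_{t=0}(H_0+tV)^k$ equals the sum of all monomials with exactly $j$ factors of $V$, again each with coefficient $1$. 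Hence subtracting $\sum_{j=0}^{p-1}$ of these from $(H_0+V)^k$ exactly cancels all monomials containing fewer than $p$ copies of $V$, and what remains is
\[
R_{p,H_0,V}(\la^k)=\sum_{r\geq p}\ \sum_{\substack{i_0,\dots,i_r\geq 0\\ i_0+\dots+i_r=k-r}} H_0^{i_0}VH_0^{i_1}V\cdots VH_0^{i_r}.
\]

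The final step is bookkeeping: the monomials with $r>p$ copies of $V$ must be rewritten as monomials with exactly $p$ copies of $V$, since the target form in the statement has precisely $p$ occurrences of $V$ and $p+1$ occurrences of powers of $H_0$. This is not an identity at the level of individual monomials but is achieved by absorbing: a monomial $H_0^{i_0}VH_0^{i_1}V\cdots VH_0^{i_r}$ with $r>p$ is itself of the required form only up to relabeling, so instead one simply groups all surviving monomials by the \emph{positions} of, say, the first $p$ of the $V$'s and the exponent sums between consecutive landmarks — but cleaner is to observe that each surviving monomial already has the shape $H_0^{k_0}VH_0^{k_1}\cdots VH_0^{k_p}$ \emph{after} merging: for $r>p$ this fails literally, so the honest route is to note that when $V=V^*\in\BH$ is fixed there is genuinely no collapsing, and the statement as written is understood with the convention that for a degree-$m$ polynomial only $r=p$ arises — which forces $k=m$ and $k-p=m-p$. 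I would therefore instead present it as: group the surviving sum by $k$ and note $R_{p,H_0,V}(f)=\sum_{k=p}^m b_k\,R_{p,H_0,V}(\la^k)$, and for the leading behaviour one keeps all terms; the coefficients $a_{k_0,\dots,k_p}=\sum_{k} b_k\,[\,\text{number of ways to write a given }(k_0,\dots,k_p)\text{-pattern}\,]$ are then manifestly scalars. The main (really the only) obstacle is this indexing subtlety — making precise that the monomials with more than $p$ factors of $V$ are, term by term, still of the asserted form by merging adjacent $H_0$-blocks is false, so the clean statement is that $R_{p,H_0,V}(f)$ is a finite $\Complex$-linear combination of words $H_0^{k_0}VH_0^{k_1}V\cdots VH_0^{k_p}$ with \emph{at least} $p$ letters $V$; I expect the paper intends exactly this (words with exactly $p$ $V$'s in the top-degree piece), and I would phrase the proof to match Lemma \ref{prop:res'} verbatim.
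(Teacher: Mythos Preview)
Your approach is exactly what the paper does: it declares the lemma ``a straightforward consequence of Lemma~\ref{prop:res'}'' and supplies no further argument. Your computation is correct up to and including the display
\[
R_{p,H_0,V}(\la^k)=\sum_{r\geq p}\ \sum_{\substack{i_0,\dots,i_r\geq 0\\ i_0+\dots+i_r=k-r}} H_0^{i_0}VH_0^{i_1}V\cdots VH_0^{i_r},
\]
and your subsequent worry is also well founded: the monomials with $r>p$ factors of $V$ genuinely cannot be rewritten as scalar combinations of words with exactly $p$ factors of $V$. For instance, with $p=1$ and $f(\la)=\la^2$ one has $R_{1,H_0,V}(\la^2)=H_0V+VH_0+V^2$, and $V^2$ is not of the form $a_{1,0}H_0V+a_{0,1}VH_0$ for scalars $a_{i,j}$.

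So the issue is not a gap in your reasoning but an imprecision in the stated lemma. What your argument actually proves --- and what is both true and all that is ever needed --- is that $R_{p,H_0,V}(f)$ is a finite scalar linear combination of words $H_0^{k_0}VH_0^{k_1}\cdots VH_0^{k_r}$ with $r\geq p$ and $k_0+\dots+k_r+r\leq m$. The displayed form in the paper is literally correct only when $m=p$, where $r=p$ is forced; this is in fact the only instance in which the paper invokes the polynomial case (see the computation in Lemma~\ref{prop:int_m}, which appeals directly to Lemma~\ref{prop:res'} with $f(t)=t^p$). Your instinct at the end is right: stop at the display above and record the conclusion with $r\geq p$ rather than attempting the impossible regrouping to $r=p$.
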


\begin{lemma}\label{prop:res}Let $H_0=H_0^*$ be an operator in $\Hcal$ and $V=V^*\in\BH$.
Then,
\begin{align}\label{f-la:g_Rres1}
R_{p,H_0,V}(f_z)&=(zI-H_0-V)^{-1}-
\sum_{j=0}^{p-1}(zI-H_0)^{-1}\left(V(zI-H_0)^{-1}\right)^j\\\label{f-la:g_Rres2}
&=(zI-H_0-V)^{-1}\left(V(zI-H_0)^{-1}\right)^p.
\end{align}
\end{lemma}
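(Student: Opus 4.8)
The plan is to establish the two displayed identities \eqref{f-la:g_Rres1} and \eqref{f-la:g_Rres2} by purely algebraic manipulation of resolvents, since $f_z(\la)=\frac{1}{z-\la}$ and the operator functions involved are just resolvents $(zI-H_0-tV)^{-1}$. First I would recall the basic resolvent (Neumann-type) expansion: for $H_x=H_0+xV$ and $\im(z)\neq 0$, one has the finite identity
\[
(zI-H_x)^{-1}=\sum_{j=0}^{p-1}(zI-H_0)^{-1}\bigl(xV(zI-H_0)^{-1}\bigr)^j
+(zI-H_x)^{-1}\bigl(xV(zI-H_0)^{-1}\bigr)^p,
\]
which follows by iterating the second resolvent identity $(zI-H_x)^{-1}-(zI-H_0)^{-1}=(zI-H_x)^{-1}(xV)(zI-H_0)^{-1}$ exactly $p$ times; the finite geometric-series truncation is what produces the explicit remainder term. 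Setting $x=1$ gives precisely the right-hand side of \eqref{f-la:g_Rres2} as the remainder, and rearranging gives \eqref{f-la:g_Rres1}, \emph{provided} one knows that the first $p$ terms of this expansion coincide with the Taylor sum $\sum_{j=0}^{p-1}\frac{1}{j!}\frac{d^j}{dx^j}\big|_{x=0}f_z(H_x)$ appearing in the definition \eqref{f-la:rem} of $R_{p,H_0,V}$.

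So the key step is to identify the coefficients. Here I would use Lemma \ref{prop:res'} with $k=1$: it gives
\[
\frac{d^j}{dx^j}\big((zI-H_x)^{-1}\big)
= j!\sum_{\substack{1\le k_0,\dots,k_j\le 1\\ k_0+\dots+k_j=1+j}}(zI-H_x)^{-k_0}V\cdots V(zI-H_x)^{-k_j},
\]
but with $k=1$ the only admissible multi-index is $k_0=\dots=k_j=1$, so the sum collapses to the single term $j!\,(zI-H_x)^{-1}V(zI-H_x)^{-1}V\cdots V(zI-H_x)^{-1}$ with $j$ factors of $V$. Evaluating at $x=0$ and dividing by $j!$ yields $(zI-H_0)^{-1}\bigl(V(zI-H_0)^{-1}\bigr)^j$, which matches term-by-term the $j$th summand of the resolvent expansion above. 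Summing over $j=0,\dots,p-1$ therefore shows the truncated resolvent expansion equals the Taylor polynomial, and subtracting from $(zI-H_0-V)^{-1}=f_z(H_0+V)$ gives \eqref{f-la:g_Rres1}; the displayed resolvent identity then immediately gives the compact form \eqref{f-la:g_Rres2}.

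I do not anticipate a serious obstacle here; the lemma is essentially bookkeeping. The only point requiring a little care is the combinatorial verification that iterating the second resolvent identity exactly $p$ times produces a remainder of the stated form $(zI-H_0-V)^{-1}\bigl(V(zI-H_0)^{-1}\bigr)^p$ rather than, say, $\bigl((zI-H_0)^{-1}V\bigr)^p(zI-H_0-V)^{-1}$ — one must iterate the identity ``on the correct side'' consistently, peeling off a factor $(zI-H_0)^{-1}V$ from the left (or $V(zI-H_0)^{-1}$ from the right) at each stage, and then recognize that the leftover full resolvent can be written with the perturbed operator on the outside. Since everything takes place in $\BH$ with $H_0$ self-adjoint and $V=V^*$ bounded, all the resolvents are bounded and no convergence or domain issues arise, so the induction is clean. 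A remark worth making is that \eqref{f-la:g_Rres2} is the genuinely useful form for later estimates, because it exhibits the remainder as a product in which the perturbation $V$ appears exactly $p$ times, which is what will eventually feed into bounds of the type $\|R_{p,H_0,V}(f_z)\|=\mathcal{O}(\|V\|^p)$ and, after taking traces, into the Hilbert–Schmidt estimates on $\nu_p$.
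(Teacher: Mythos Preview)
Your proposal is correct and uses essentially the same two ingredients as the paper's proof: Lemma~\ref{prop:res'} with $k=1$ to identify the Taylor coefficients $\frac{1}{j!}\frac{d^j}{dx^j}\big|_{x=0}(zI-H_x)^{-1}=(zI-H_0)^{-1}\bigl(V(zI-H_0)^{-1}\bigr)^j$, and iteration of the second resolvent identity to pass between \eqref{f-la:g_Rres1} and \eqref{f-la:g_Rres2}. The only cosmetic difference is the order of presentation---the paper first obtains \eqref{f-la:g_Rres1} from the derivative formula and then iterates the resolvent identity to reach \eqref{f-la:g_Rres2}, whereas you first write the iterated resolvent expansion and then match coefficients---but the content is the same.
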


\begin{proof}
By Lemma \ref{prop:res'},
\[\frac{d^j}{dx^j}\bigg|_{x=x_0}\left((zI-H_0-xV)^{-1}\right)=
j!(zI-H_0-x_0V)^{-1}\left(V(zI-H_0-x_0V)^{-1}\right)^j,\] which gives
\eqref{f-la:g_Rres1}.
To derive \eqref{f-la:g_Rres2} from \eqref{f-la:g_Rres1}, we use
repeatedly the resolvent identity
\[(zI-H_0-V)^{-1}-(zI-H_0)^{-1}=(zI-H_0-V)^{-1}V(zI-H_0)^{-1}.\]
By combining $(zI-H_0-V)^{-1}$ and the first summand of
\[\sum_{j=0}^{p-1}(zI-H_0)^{-1}\left(V(zI-H_0)^{-1}\right)^j,\] we
obtain that (provided $p>1$)
\begin{align*}&(zI-H_0-V)^{-1}-\sum_{j=0}^{p-1}(zI-H_0)^{-1}\left(V(zI-H_0)^{-1}\right)^j\\
&\quad=(zI-H_0-V)^{-1}V(zI-H_0)^{-1}-\sum_{j=1}^{p-1}(zI-H_0)^{-1}\left(V(zI-H_0)^{-1}\right)^j.
\end{align*}
Repeating the reasoning above sufficiently many times completes the proof of \eqref{f-la:g_Rres2}.
\end{proof}

From \eqref{f-la:g_Rres1} we have the following relation between the remainders of different order.

\begin{lemma}\label{prop:R_rec}
Let $H_0=H_0^*$ be an operator in $\Hcal$ and $V=V^*\in\BH$.
Then
\[R_{p+1,H_0,V}\left(f_z\right)=R_{p,H_0,V}\left(f_z\right)
-\big((zI-H_0)^{-1}V\big)^p(zI-H_0)^{-1}.\]
\end{lemma}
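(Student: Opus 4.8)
The statement to prove is Lemma~\ref{prop:R_rec}: that
\[R_{p+1,H_0,V}(f_z)=R_{p,H_0,V}(f_z)-\big((zI-H_0)^{-1}V\big)^p(zI-H_0)^{-1}.\]

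The plan is to read this straight off the explicit formula \eqref{f-la:g_Rres1} from Lemma~\ref{prop:res}. That lemma gives, for each $q$,
\[R_{q,H_0,V}(f_z)=(zI-H_0-V)^{-1}-\sum_{j=0}^{q-1}(zI-H_0)^{-1}\big(V(zI-H_0)^{-1}\big)^j.\]
First I would write this out for $q=p+1$ and for $q=p$, and subtract. The leading term $(zI-H_0-V)^{-1}$ cancels, leaving
\[R_{p+1,H_0,V}(f_z)-R_{p,H_0,V}(f_z)=-\sum_{j=0}^{p}(zI-H_0)^{-1}\big(V(zI-H_0)^{-1}\big)^j+\sum_{j=0}^{p-1}(zI-H_0)^{-1}\big(V(zI-H_0)^{-1}\big)^j,\]
and all terms with $0\le j\le p-1$ cancel telescopically, so only the $j=p$ term of the first sum survives:
\[R_{p+1,H_0,V}(f_z)-R_{p,H_0,V}(f_z)=-(zI-H_0)^{-1}\big(V(zI-H_0)^{-1}\big)^p.\]

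The only remaining step is the cosmetic rearrangement of the surviving term into the form stated in the lemma. One has
\[(zI-H_0)^{-1}\big(V(zI-H_0)^{-1}\big)^p=\big((zI-H_0)^{-1}V\big)^p(zI-H_0)^{-1},\]
which is just a regrouping of the same alternating product of $p$ copies of $V$ and $p+1$ copies of the resolvent $(zI-H_0)^{-1}$; no commutation is used. Substituting this into the previous display gives exactly the claimed identity, completing the proof. Since $f_z\in\Rfr_b$ and $H_0=H_0^*\in\BH$ (or affiliated with $\Mcal$, with $V\in\BH$), all resolvents $(zI-H_0)^{-1}$ and $(zI-H_0-V)^{-1}$ are bounded for $z\in\Complex\setminus\Reals$, so every expression above is a well-defined bounded operator and the manipulations are legitimate.

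There is essentially no obstacle here: the lemma is a one-line algebraic consequence of \eqref{f-la:g_Rres1}, and the ``hard part'' — establishing the closed form for $R_{q,H_0,V}(f_z)$ via the resolvent identity — has already been done in Lemma~\ref{prop:res}. The only thing to be mildly careful about is bookkeeping the index of summation so the telescoping is transparent, and noting explicitly that the final regrouping of the $j=p$ term requires no commutativity, merely the associativity of operator multiplication.
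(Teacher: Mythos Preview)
Your proof is correct and is exactly the approach the paper takes: the paper simply states that the lemma follows from \eqref{f-la:g_Rres1} and gives no further argument, so your subtraction of the $q=p+1$ and $q=p$ instances of that formula is precisely what is intended.
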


The following lemma is a straightforward generalization of
\cite[Lemma 2.6]{Dostanic}.

\begin{lemma}\label{prop:fcalc}
Let $H_0=H_0^*, V=V^*\in\BH$, and $\Gamma=\{\la\st
|\la|=1+\norm{H_0}+\norm{V}\}$. Then, for every function $f$ analytic
in a neighborhood of $D=\{\la\st |\la|\leq 1+\norm{H_0}+\norm{V}\}$,
\begin{align*}R_{p,H_0,V}(f)=
\frac{1}{2\pi\i}\oint_\Gamma
f(\la)(\la I-H_0)^{-1}\big(V(\la I-H_0)^{-1}\big)^p\big(I-V(\la I-H_0)^{-1}\big)^{-1}\,d\la.
\end{align*}
\end{lemma}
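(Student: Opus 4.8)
The plan is to combine Lemma~\ref{prop:res} with the holomorphic functional calculus expressed as a contour integral. First I would invoke the resolvent form of the remainder, namely \eqref{f-la:g_Rres2}, which gives
\[
R_{p,H_0,V}(f_\la) = (\la I-H_0-V)^{-1}\big(V(\la I-H_0)^{-1}\big)^p
\]
for each $\la\in\Gamma$ (note $\Gamma$ lies outside the spectra of both $H_0$ and $H_0+V$, since $\norm{H_0+V}\le\norm{H_0}+\norm{V}<1+\norm{H_0}+\norm{V}$, so all resolvents above are well-defined and uniformly bounded on $\Gamma$). The key algebraic observation is then that
\[
(\la I-H_0-V)^{-1} = (\la I-H_0)^{-1}\big(I-V(\la I-H_0)^{-1}\big)^{-1},
\]
which follows from factoring $\la I-H_0-V = (\la I-H_0)\big(I-(\la I-H_0)^{-1}V\big)$ — or rather, to match the ordering in the statement, from the Neumann-series identity $(\la I-H_0-V)^{-1}=\sum_{n\ge 0}\big((\la I-H_0)^{-1}V\big)^n(\la I-H_0)^{-1}$, valid on $\Gamma$ because $\norm{V(\la I-H_0)^{-1}}<1$ there. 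Rearranging, one gets exactly $R_{p,H_0,V}(f_\la) = (\la I-H_0)^{-1}\big(V(\la I-H_0)^{-1}\big)^p\big(I-V(\la I-H_0)^{-1}\big)^{-1}$, which is the integrand in the claimed formula.

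Next I would pass from resolvents $f_\la$ to a general $f$ analytic on a neighborhood of $D$ by the Cauchy integral formula: for $H=H^*$ with $\norm{H}<1+\norm{H_0}+\norm{V}$ one has $f(H)=\frac{1}{2\pi\i}\oint_\Gamma f(\la)(\la I-H)^{-1}\,d\la$, applied both to $H=H_0+V$, to $H=H_0$, and (after expanding) to recover the powers $(zI-H_0)^{-1}(V(zI-H_0)^{-1})^j$ appearing in \eqref{f-la:rem}–\eqref{f-la:g_Rres1} as contour integrals of $f(\la)$ against the corresponding resolvent expressions. Since $R_{p,H_0,V}$ is linear in $f$ and the defining formula \eqref{f-la:rem} is a finite linear combination of $f(H_0+V)$ and the derivative terms — each of which is given by such a contour integral by Lemma~\ref{prop:res'} applied at $x_0=0$ (as in the proof of Lemma~\ref{prop:res}) — we obtain
\[
R_{p,H_0,V}(f) = \frac{1}{2\pi\i}\oint_\Gamma f(\la)\,R_{p,H_0,V}(f_\la)\,d\la,
\]
where $R_{p,H_0,V}(f_\la)$ denotes the operator-valued integrand already computed. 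Substituting the expression from the previous paragraph yields the desired identity. One justifies interchanging the finite sum/derivatives with the contour integral by the uniform operator-norm convergence of the Riemann sums for the integral over the compact set $\Gamma$, together with continuity of the finite-dimensional algebraic operations involved.

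The only genuinely delicate point — and the step I expect to need the most care — is the bookkeeping that matches the \emph{ordering} of the factors $V$ and $(\la I-H_0)^{-1}$ in the Neumann expansion with the ordering demanded in the final formula, and making sure the geometric series $\big(I-V(\la I-H_0)^{-1}\big)^{-1}$ is written on the correct side; everything else is routine manipulation of absolutely convergent series and contour integrals of norm-continuous integrands. This is exactly the generalization announced: \cite[Lemma 2.6]{Dostanic} is the case where $\Mcal=\BH$ and $\tau$ is the standard trace, and the argument above uses nothing about $\Mcal$ beyond the bounded holomorphic functional calculus, so it goes through verbatim in a semi-finite von Neumann algebra. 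Hence the bound $\norm{R_{p,H_0,V}(f)}\le C\,\norm{V}^p\sup_{\la\in\Gamma}|f(\la)|$ also drops out, consistent with \eqref{f-la:as}.
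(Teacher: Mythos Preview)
Your proposal is correct and follows the natural route; the paper itself does not give a proof, stating only that the lemma is ``a straightforward generalization of \cite[Lemma 2.6]{Dostanic}.'' Your argument---combine the holomorphic functional calculus $f(H)=\frac{1}{2\pi\i}\oint_\Gamma f(\la)(\la I-H)^{-1}\,d\la$ with the resolvent identity \eqref{f-la:g_Rres2} and the Neumann-series factorization $(\la I-H_0-V)^{-1}=(\la I-H_0)^{-1}\big(I-V(\la I-H_0)^{-1}\big)^{-1}$---is exactly the intended ``straightforward'' proof, and your observation that $(I-A)^{-1}$ commutes with $A^p$ for $A=V(\la I-H_0)^{-1}$ resolves the ordering issue you flagged.
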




Let $(S,\nu)$ be a measure space and let $\mathcal{L}_\infty^{so^*}(S,\nu,\ncs{1})$ denote the $*$-algebra of $\norm{\cdot}$-bounded $so^*$-measurable functions $F:S\mapsto\ncs{1}$ \cite{Pagter}.

\begin{prop}$($See \cite[Lemma 3.10]{Azamov0}$).$\label{prop:so*}
Let $F$ be a function in $\mathcal{L}_\infty^{so^*}(S,\nu,\ncs{1})$ uniformly
$\ncs{1}$-bounded. Then $\int_S F(s)\,d\nu(s)\in\ncs{1}$,
$\tau(F(\cdot))$ is measurable and
\[\tau\left(\int_S F(s)\,d\nu(s)\right)=\int_S \tau(F(s))\,d\nu(s).\]
\end{prop}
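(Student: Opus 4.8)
**Proof proposal for Proposition \ref{prop:so*}.**

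The plan is to follow the standard approach to vector-valued Bochner-type integration against the trace, exploiting the hypothesis that $F$ takes values in the trace ideal $\ncs{1}$ and is uniformly $\ncs{1}$-bounded, i.e.\ $\sup_{s\in S}\norm{F(s)}_1<\infty$ together with $\sup_{s\in S}\norm{F(s)}<\infty$. First I would fix a countable family $\{e_i\}$ of elements realizing the predual pairing, or more directly work with an orthonormal basis $\{\xi_i\}$ adapted to $\tau$, and observe that $so^*$-measurability of $F$ implies that for each pair of vectors the scalar function $s\mapsto\langle F(s)\xi,\eta\rangle$ is measurable; from this one deduces that $s\mapsto\tau(F(s)W)$ is measurable for every fixed $W\in\Mcal$, and in particular (taking $W=I$) that $\tau(F(\cdot))$ is measurable, which is one of the two conclusions.

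Next I would establish that $\int_S F(s)\,d\nu(s)$ exists as a Bochner integral in the Banach space $\ncs{1}$ (with norm $\norm{\cdot}_{1,\infty}$). For this I would use that $F$, being $so^*$-measurable and $\norm{\cdot}$-bounded, is approximable $\nu$-a.e.\ by countably-valued functions in the $so^*$-topology, hence (by uniform $\ncs{1}$-boundedness and an interpolation/dominated-convergence argument for the noncommutative $L_1$-norm, cf.\ \cite{Pagter}) the map $s\mapsto F(s)$ is $\ncs{1}$-Bochner measurable; since $\int_S\norm{F(s)}_{1,\infty}\,d\nu(s)\leq\nu(S)\big(\sup_s\norm{F(s)}_1+\sup_s\norm{F(s)}\big)<\infty$ when $\nu$ is finite (and on a general measure space one reduces to the finite case on the support of $F$, or this is part of the hypotheses implicit in the uniform bound), the Bochner integral exists and lies in $\ncs{1}$. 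Then I would invoke the general fact that a bounded linear functional commutes with the Bochner integral: applying $\tau$, which is bounded on $\ncs{1}$ by $|\tau(X)|\leq\norm{X}_1$, yields
\[
\tau\Big(\int_S F(s)\,d\nu(s)\Big)=\int_S\tau(F(s))\,d\nu(s),
\]
completing the proof.

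The main obstacle I anticipate is the measurability/integrability bookkeeping connecting the $so^*$-measurability hypothesis to genuine $\ncs{1}$-valued Bochner measurability — in particular checking that the approximating simple functions can be taken to be uniformly $\ncs{1}$-bounded so that passage to the limit is legitimate in the $L_1$-norm, rather than merely in a weaker topology. This is exactly the technical content of the cited \cite[Lemma 3.10]{Azamov0} and \cite{Pagter}, so in practice I would quote those sources for this step and devote the written proof mainly to the identity $\tau(\int F\,d\nu)=\int\tau(F)\,d\nu$, which is then immediate from boundedness of $\tau$ on $\ncs{1}$.
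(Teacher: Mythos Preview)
The paper does not give its own proof of this proposition: it is stated with the attribution ``See \cite[Lemma 3.10]{Azamov0}'' and then simply used. So there is nothing to compare your argument against; the paper defers the entire proof to the cited reference, exactly as you yourself suggest doing in your final paragraph. Your outline (measurability of $s\mapsto\tau(F(s))$ from $so^*$-measurability, Bochner integrability in $\ncs{1}$ from the uniform $\norm{\cdot}_{1,\infty}$-bound, and then commuting the bounded functional $\tau$ through the Bochner integral) is the standard route and is consistent with what one finds in \cite{Azamov0}; the one point to be careful about, which you correctly flag, is upgrading $so^*$-measurability to genuine $\ncs{1}$-Bochner measurability, and that is precisely the content borrowed from \cite{Azamov0,Pagter}.
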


Similarly to \cite[Lemma 4.5, Theorem 5.7]{Azamov0}, we have the following differentiation formula for an operator function $f(\cdot)$, with $f\in\mathcal{W}_p$.

\begin{lemma}
\label{prop:eder} Let $H_0=H_0^*$ be an operator affiliated with $\Mcal$ and $V=V^*\in\ncs{p}$. Let $H_x=H_0+xV$, with $x\in\Reals$. Then, for $f\in\mathcal{W}_p$ given by $f(\la)=\int_\Reals e^{\i t\la}\,d\mu_f(t)$, the function $f(H_x)$ is $p$ times Fr\'{e}chet differentiable in the norm $\norm{\cdot}_{1,\infty}$ and the derivative equals the Bochner-type multiple operator integral
\begin{align*}
\frac{d^p}{dx^p}f(H_x)=p!\int_{\Pi^{(p)}}e^{\i
(s_0-s_1)H_x}V\dots V e^{\i (s_{p-1}-s_p)H_x}Ve^{\i s_p H_x}\,d\sigma_f^{(p)}(s_0,\dots,s_p).
\end{align*} Here
\[{\Pi^{(p)}}=\{(s_0,s_1,\dots,s_p)\in\Reals^{p+1}\st |s_p|\leq\dots\leq|s_1|\leq|s_0|,
\text{\rm sign}(s_0)=\dots=\text{\rm sign}(s_p)\}\] and
$d\sigma_f^{(p)}(s_0,s_1,\dots,s_p)=\i^p\mu_f(ds_0)ds_1\dots ds_p$.

In particular, for $t\in\Reals$,
\begin{align*}
\frac{d^p}{dx^p}e^{\i tH_x}=\i^p p!\int_{\Pi^{(p-1)}}e^{\i
(t-s_1)H_x}V\dots V e^{\i (s_{p-1}-s_p)H_x}Ve^{\i s_p H_x}\,ds_p\dots ds_1.
\end{align*}
\end{lemma}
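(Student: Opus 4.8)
The plan is to reduce the statement to the scalar Fourier-transform representation $f(\la) = \int_\Reals e^{\i t\la}\, d\mu_f(t)$ and to differentiate under the integral sign, so that the problem becomes the (simpler) computation of $\frac{d^p}{dx^p} e^{\i t H_x}$ together with a justification that the resulting Bochner integral converges in $\norm{\cdot}_{1,\infty}$. First I would establish the ``in particular'' formula for $\frac{d^p}{dx^p} e^{\i t H_x}$, since the general case follows by integrating it against $\mu_f$. For this, I would use the Duhamel/Dyson expansion: differentiating $x\mapsto e^{\i t H_x}$ once gives $\frac{d}{dx} e^{\i t H_x} = \i\int_0^t e^{\i s H_x} V e^{\i (t-s) H_x}\, ds$ (a standard identity, valid in $\norm{\cdot}_{1,\infty}$ because $V\in\ncs{p}\subseteq\ncs{1}$ whenever $p$ is as stated — more precisely one works in $\ncs{2}$ first and uses the Hölder inequality $\norm{V_1\cdots V_p}_1 \le \prod\norm{V_j}_p$ to land in $\ncs 1$ after $p$ factors). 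Iterating $p$ times produces a $p$-fold time-ordered integral over a simplex, and after the change of variables $s_0 = t,\ s_i = $ partial sums (matching the region $\Pi^{(p-1)}$ with its sign and ordering constraints on $|s_i|$), one reads off exactly the claimed formula with the factor $\i^p p!$ coming from the $p!$ orderings collapsed into the region $\Pi^{(p-1)}$ and the $\i^p$ from the $p$ Duhamel factors.

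Next I would pass from $e^{\i t H_x}$ to $f(H_x)$ by writing $f(H_x) = \int_\Reals e^{\i t H_x}\, d\mu_f(t)$ and differentiating under the integral. The key analytic input here is Proposition \ref{prop:so*}: the integrand $(s_0,\dots,s_p)\mapsto e^{\i(s_0-s_1)H_x} V\cdots V e^{\i s_p H_x}$ is $so^*$-measurable, uniformly $\norm{\cdot}$-bounded (each exponential is unitary), and — by Hölder as above — uniformly $\ncs 1$-bounded, while the total measure $d\sigma_f^{(p)} = \i^p \mu_f(ds_0)\, ds_1\cdots ds_p$ restricted to $\Pi^{(p)}$ is finite precisely because on $\Pi^{(p)}$ the variables $s_1,\dots,s_p$ are confined to $|s_i|\le|s_0|$, giving a factor $|s_0|^p/p!$ after integrating them out, and $\int_\Reals |s_0|^p\, d|\mu_f|(s_0) < \infty$ by the assumption $f\in\mathcal{W}_p$ (the $p$th derivative of $f$ is also a Fourier transform of a finite measure, which is equivalent to $t^p$ being $|\mu_f|$-integrable). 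This integrability is exactly what lets one (i) apply Proposition \ref{prop:so*} to interchange $\tau$, or rather the Bochner integral, with the $\mu_f$-integral, and (ii) iterate the Duhamel differentiation $p$ times with each step justified by dominated convergence in $\norm{\cdot}_{1,\infty}$.

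Finally I would assemble Fréchet differentiability from Gâteaux differentiability plus continuity: having computed the $p$th directional derivative as the displayed multilinear expression in $V$, I would note it is a bounded $p$-linear form in the perturbation (again by Hölder, with norm controlled by $p!\,\norm{\sigma_f^{(p)}}\,\norm{V}_{p,\infty}^p$), and that it depends continuously on $H_0$ in the appropriate topology, which upgrades Gâteaux to Fréchet differentiability by the standard criterion (as in \cite[Lemma 4.5, Theorem 5.7]{Azamov0}). The main obstacle I anticipate is not the algebra of the Duhamel expansion but the bookkeeping in the change of variables that turns the iterated time-ordered simplex integrals into the single integral over $\Pi^{(p-1)}$ (respectively $\Pi^{(p)}$), keeping track of the sign constraints $\text{sign}(s_0)=\dots=\text{sign}(s_p)$ and the nested absolute-value ordering, and making sure the combinatorial constant is $\i^p p!$; this is where one must be careful, since $t$ may be negative and the naive simplex $0\le s_p\le\dots\le s_1\le t$ only covers the case $t>0$. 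The resolution is to treat $t>0$ and $t<0$ separately and observe that $\Pi^{(p-1)}$ is exactly the union of the two corresponding regions, which is why the sign condition appears in its definition.
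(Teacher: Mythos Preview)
Your proposal is correct and is exactly the argument the paper intends: the paper gives no proof of this lemma, merely the remark ``Similarly to \cite[Lemma 4.5, Theorem 5.7]{Azamov0}'', and your Duhamel iteration on $e^{\i tH_x}$ followed by integration against $\mu_f$ with the Bochner/H\"older justification is precisely that route. One small slip worth fixing: the inclusion $\ncs{p}\subseteq\ncs{1}$ is false (it goes the other way), so after a single Duhamel step the expression lives only in $\ncs{p}$; it is only after $p$ differentiations, when $p$ copies of $V$ appear, that H\"older's inequality $\norm{V\cdots V}_1\le\norm{V}_p^p$ places the result in $\ncs{1}$---which is what you in fact use in the next clause.
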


By applying Proposition \ref{prop:so*} and Lemma \ref{prop:eder}, we obtain the following

\begin{lemma}\label{prop:edertr}
Let $H_0=H_0^*$ be an operator affiliated with $\Mcal$ and $V=V^*\in\ncs{p}$. Let $H_x=H_0+xV$, with $x\in\Reals$. Then for $f\in\mathcal{W}_p$, we have $\frac{d^p}{dx^p}f(H_x)\in\ncs{1}$,
\begin{align*}
\tau\left[\frac{d^p}{dx^p}f(H_x)\right]=p!\int_{\Pi^{(p)}}\tau\left[e^{\i
(s_0-s_1)H_x}V\dots V e^{\i (s_{p-1}-s_p)H_x}Ve^{\i s_p H_x}\right]\,d\sigma_f^{(p)}(s_0,\dots,s_p)
\end{align*} and
\[\norm{\frac{d^p}{dx^p}f(H_x)}_1\leq\norm{V}_p^p\norm{\mu_{f^{(p)}}}_p.\]
\end{lemma}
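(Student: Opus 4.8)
The plan is to combine Lemma~\ref{prop:eder} with Proposition~\ref{prop:so*} applied to the integrand appearing in the Bochner-type multiple operator integral, and then to estimate the trace norm by a direct norm estimate on that integrand. First I would fix $f\in\mathcal{W}_p$ written as $f(\la)=\int_\Reals e^{\i t\la}\,d\mu_f(t)$, set $H_x=H_0+xV$, and introduce the operator-valued function
\[
F(s_0,s_1,\dots,s_p)=e^{\i(s_0-s_1)H_x}V e^{\i(s_1-s_2)H_x}V\cdots V e^{\i(s_{p-1}-s_p)H_x}V e^{\i s_p H_x}
\]
on the domain $\Pi^{(p)}\subset\Reals^{p+1}$ equipped with the measure $|d\sigma_f^{(p)}|$. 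Since each $e^{\i(\,\cdot\,)H_x}$ is a unitary in $\Mcal$ and $V\in\ncs{p}\subset\ncs{1}$ (using that $\tau(|V|^p)<\infty$ together with $V\in\Mcal$; if $\tau$ is infinite one first reduces to the finite-trace subalgebra generated by $H_x$ and $V$, or uses the standard argument that $V\in\ncs{p}$ and $V=V^*$ bounded forces $V\in\ncs{1}$ on the relevant corner — in the intended applications $\tau(|V|)<\infty$), we get $F(s_0,\dots,s_p)\in\ncs{1}$ for each point, with $\ncs{1}$-norm bounded by $\norm{V}_1^{?}$—but the sharp bound requires a Hölder argument rather than this crude one; see below. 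The $so^*$-measurability of $F$ follows from strong continuity of $x\mapsto e^{\i x H_x}$ and the fact that products of $so^*$-measurable, norm-bounded functions with a fixed $\ncs{1}$ element stay in $\mathcal{L}_\infty^{so^*}(\Pi^{(p)},|d\sigma_f^{(p)}|,\ncs{1})$; this is exactly the hypothesis under which Lemma~\ref{prop:eder} is proved, so I would cite that lemma for the representation $\frac{d^p}{dx^p}f(H_x)=p!\int_{\Pi^{(p)}}F\,d\sigma_f^{(p)}$ and for membership $\frac{d^p}{dx^p}f(H_x)\in\ncs{1}$.

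Granting uniform $\ncs{1}$-boundedness of $F$, Proposition~\ref{prop:so*} yields immediately that $\tau$ commutes with the integral, giving the stated trace formula
\[
\tau\!\left[\frac{d^p}{dx^p}f(H_x)\right]=p!\int_{\Pi^{(p)}}\tau\big[F(s_0,\dots,s_p)\big]\,d\sigma_f^{(p)}(s_0,\dots,s_p).
\]
For the norm bound I would argue as follows. By the triangle inequality for the Bochner integral in $\ncl{1}$,
\[
\norm{\tfrac{d^p}{dx^p}f(H_x)}_1\le p!\int_{\Pi^{(p)}}\norm{F(s_0,\dots,s_p)}_1\,|d\sigma_f^{(p)}|.
\]
Now for the integrand: since the exponentials are unitary and drop out of the trace-norm up to the noncommutative Hölder inequality, $\norm{F}_1\le\prod_{k=1}^{p}\norm{V}_p=\norm{V}_p^p$, because $F$ is a product of $p$ copies of $V$ interlaced with unitaries and $\tfrac1p+\cdots+\tfrac1p=1$ ($p$ terms). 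The remaining factor is $\int_{\Pi^{(p)}}|d\sigma_f^{(p)}|$. Unwinding the definition $d\sigma_f^{(p)}=\i^p\,\mu_f(ds_0)\,ds_1\cdots ds_p$ and the description of $\Pi^{(p)}$ (the $s_1,\dots,s_p$ range over the simplex $|s_p|\le\cdots\le|s_1|\le|s_0|$ with matching signs), the inner Lebesgue integral over that simplex equals $|s_0|^p/p!$, so $\int_{\Pi^{(p)}}|d\sigma_f^{(p)}|=\tfrac1{p!}\int_\Reals|s_0|^p\,d|\mu_f|(s_0)=\tfrac1{p!}\norm{\mu_{f^{(p)}}}$, using that $\widehat{\mu_{f^{(p)}}}=f^{(p)}$ corresponds to $(\i t)^p\,d\mu_f(t)$ and hence $\norm{\mu_{f^{(p)}}}=\int_\Reals|t|^p\,d|\mu_f|(t)$ (this is precisely $\norm{\mu_{f^{(p)}}}_p$ in the paper's notation, interpreting the subscript as the $p$th absolute moment). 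Multiplying, $\norm{\tfrac{d^p}{dx^p}f(H_x)}_1\le p!\cdot\norm{V}_p^p\cdot\tfrac1{p!}\norm{\mu_{f^{(p)}}}=\norm{V}_p^p\,\norm{\mu_{f^{(p)}}}$, which is the claimed estimate.

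The main obstacle I anticipate is the bookkeeping that makes the Hölder step legitimate together with the $so^*$-measurability/uniform $\ncs{1}$-bound hypothesis of Proposition~\ref{prop:so*}: one must check that $F$ really lands in $\mathcal{L}_\infty^{so^*}$ with a uniform $\ncs{1}$-bound, not merely that each $F(s)$ is trace-class. The clean way is to absorb one copy of $V$ as the $\ncs{1}$-valued ``coefficient'' and treat the rest of the word (the other $p-1$ copies of $V$, now viewed only in $\Mcal$, interlaced with unitaries) as the norm-bounded $so^*$-measurable multipliers; measurability then follows from strong continuity of $x\mapsto e^{\i xH_x}$ in $x$ and joint measurability in the $s$-variables, exactly as in the proof of Lemma~\ref{prop:eder}. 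For the norm bound one instead keeps all $p$ factors of $V$ and applies the generalized Hölder inequality $\norm{A_1\cdots A_p}_1\le\prod\norm{A_j}_p$ after cyclically permuting the unitaries, which is valid since the unitaries have operator norm $1$. Everything else is routine: the simplex volume computation, the identification of $\norm{\mu_{f^{(p)}}}$ with the $p$th moment of $|\mu_f|$, and the interchange of $\tau$ with the integral via Proposition~\ref{prop:so*}.
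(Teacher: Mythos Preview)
Your approach is exactly the paper's: the lemma is obtained by applying Proposition~\ref{prop:so*} to the integrand furnished by Lemma~\ref{prop:eder}, and the paper says no more than that. Your norm estimate via the noncommutative H\"older inequality and the simplex-volume computation is also the intended argument.

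However, two details in your write-up are wrong and should be fixed. First, the inclusion $\ncs{p}\subset\ncs{1}$ is false for a general semi-finite $(\Mcal,\tau)$ (take $\Mcal=\BH$ with the usual trace: the Schatten $p$-class is not contained in the trace class for $p>1$), and your hedge about ``reducing to the finite-trace subalgebra'' or ``$V$ bounded forces $V\in\ncs{1}$ on the relevant corner'' does not rescue it. You do not need this inclusion at all: the reason $F(s_0,\dots,s_p)\in\ncs{1}$ is precisely the H\"older argument you give later, namely that $F$ is a word in $p$ factors from $\ncs{p}$ interlaced with unitaries, so $\norm{F}_1\le\norm{V}_p^p$. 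That same inequality gives the uniform $\ncs{1}$-bound required by Proposition~\ref{prop:so*}. Second, and for the same reason, your proposed route to $so^*$-measurability by ``absorbing one copy of $V$ as the $\ncs{1}$-valued coefficient'' does not work as stated, since a single $V$ lies only in $\ncs{p}$. Instead, observe that $s\mapsto F(s)$ is $so^*$-measurable as a map into $\Mcal$ (products of strongly continuous unitary groups with fixed bounded operators), is $\norm{\cdot}$-bounded by $\norm{V}^p$, and takes values in $\ncs{1}$ with the uniform bound $\norm{V}_p^p$; this is exactly the hypothesis of Proposition~\ref{prop:so*}. With these corrections your argument is complete and matches the paper.
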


\begin{cor}\label{prop:dos_der}
Under the assumptions of Lemma \ref{prop:edertr},
\[\tau\left[\frac{d^p}{dx^p}f(H_x)\right]=\int_\Reals\tau\left[\frac{d^p}{dx^p}e^{\i tH_x}\right]\,d\mu_f(t).\]
\end{cor}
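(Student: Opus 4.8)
The statement to prove is Corollary~\ref{prop:dos_der}: that under the hypotheses of Lemma~\ref{prop:edertr}, one has
\[\tau\left[\frac{d^p}{dx^p}f(H_x)\right]=\int_\Reals\tau\left[\frac{d^p}{dx^p}e^{\i tH_x}\right]\,d\mu_f(t).\]
The natural approach is to start from the two explicit formulas already in hand. Lemma~\ref{prop:eder} gives $\frac{d^p}{dx^p}f(H_x)$ as a Bochner-type integral over $\Pi^{(p)}$ against $d\sigma_f^{(p)}(s_0,\dots,s_p)=\i^p\,\mu_f(ds_0)\,ds_1\cdots ds_p$, and it also gives $\frac{d^p}{dx^p}e^{\i tH_x}$ as an integral over $\Pi^{(p-1)}$ against $\i^p p!\,ds_p\cdots ds_1$. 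Lemma~\ref{prop:edertr} tells us that $\frac{d^p}{dx^p}f(H_x)\in\ncs{1}$ and that the trace can be moved inside the $\Pi^{(p)}$-integral. So the plan is: (i) apply $\tau$ to the $\Pi^{(p)}$-formula and pull it inside, as licensed by Lemma~\ref{prop:edertr} (equivalently Proposition~\ref{prop:so*}); (ii) peel off the $s_0$-integration, recognizing that $\Pi^{(p)}$ fibers over the $s_0$-axis with fiber $\{(s_1,\dots,s_p)\st |s_p|\le\dots\le|s_1|\le|s_0|,\ \text{sign}(s_j)=\text{sign}(s_0)\}$, which — once we identify $s_0$ with the variable $t$ — is exactly the region $\Pi^{(p-1)}$ appearing in the formula for $\frac{d^p}{dx^p}e^{\i tH_x}$; (iii) observe that the inner integral over this fiber, against $\i^p p!\,ds_1\cdots ds_p$ and with the integrand $\tau[e^{\i(t-s_1)H_x}V\cdots Ve^{\i s_pH_x}]$, is precisely $\tau[\frac{d^p}{dx^p}e^{\i tH_x}]$; and (iv) conclude that what remains is $\int_\Reals \tau[\frac{d^p}{dx^p}e^{\i tH_x}]\,d\mu_f(t)$.

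The key bookkeeping step is the Fubini-type decomposition in (ii)--(iii). Writing $d\sigma_f^{(p)}=\i^p\,\mu_f(ds_0)\,ds_1\cdots ds_p$, the $\Pi^{(p)}$-integral of $\tau[\cdots]$ decomposes as an iterated integral $\int_\Reals\mu_f(ds_0)$ of $\i^p\int_{\Pi^{(p-1)}(s_0)}\tau[e^{\i(s_0-s_1)H_x}V\cdots Ve^{\i s_pH_x}]\,ds_1\cdots ds_p$, where $\Pi^{(p-1)}(s_0)$ denotes the $s_0$-slice just described. After relabeling $s_0\mapsto t$, this slice is $\Pi^{(p-1)}$ in the notation of Lemma~\ref{prop:eder}, and multiplying by the missing factor of $p!$ from $\frac{d^p}{dx^p}e^{\i tH_x}=\i^p p!\int_{\Pi^{(p-1)}}\cdots$ and dividing it back out, the inner integral is exactly $\frac1{p!}\cdot$ the integrand we want, while the outer $p!$ from the $f$-formula supplies the matching factor. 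I should double-check that the sign/orientation conventions match: in both lemmas the region requires $\text{sign}(s_0)=\cdots=\text{sign}(s_p)$ and $|s_p|\le\cdots\le|s_0|$, so with $s_0=t$ the constraint on $(s_1,\dots,s_p)$ coming from $\Pi^{(p)}$ is identical to the constraint defining $\Pi^{(p-1)}$ with the distinguished variable $t$, and the measures $\i^p\,ds_1\cdots ds_p$ agree after accounting for $d\sigma_f^{(p)}$ versus the $\i^p p!\,ds_p\cdots ds_1$ in the exponential formula.

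The main obstacle — though it is a mild one — is justifying the Fubini interchange for the Bochner-type multiple operator integral, i.e.\ that one may split the integration over $\Pi^{(p)}$ against $\mu_f(ds_0)\,ds_1\cdots ds_p$ into the $s_0$-integration against $\mu_f$ of the remaining integration against Lebesgue measure, with the trace commuting through. This is exactly the content packaged in Lemma~\ref{prop:edertr}/Proposition~\ref{prop:so*}: the integrand $(s_1,\dots,s_p)\mapsto e^{\i(t-s_1)H_x}V\cdots Ve^{\i s_pH_x}$ is $so^*$-measurable, $\norm{\cdot}$-bounded, and uniformly $\ncs{1}$-bounded by $\norm{V}_p^p$ on the finite-Lebesgue-measure region $\Pi^{(p-1)}$ (for fixed $t$), so its integral lies in $\ncs{1}$, equals $\frac1{p!}\frac{d^p}{dx^p}e^{\i tH_x}$, and has measurable trace; then a further application of Proposition~\ref{prop:so*} in the variable $s_0$ against the finite measure $\mu_f$ finishes it. Since all the integrability bounds are already established in Lemma~\ref{prop:edertr}, the proof is essentially the observation that the two stated formulas are the same formula read in two orders, and the corollary follows immediately.
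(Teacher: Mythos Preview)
Your proposal is correct and follows essentially the same approach as the paper: the paper's proof simply says that the claim follows ``by reducing the double integral to an iterated one and applying Lemmas~\ref{prop:eder} and~\ref{prop:edertr},'' which is precisely the Fubini-type decomposition over $\Pi^{(p)}=\bigcup_{s_0}\{s_0\}\times\Pi^{(p-1)}(s_0)$ that you spell out in detail. Your careful verification that the $s_0$-slice of $\Pi^{(p)}$ matches the region in the exponential formula, and that the factors of $p!$ and $\i^p$ line up, is exactly the bookkeeping the paper leaves implicit.
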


\begin{proof}
The claim is proved by reducing the double integral to an iterated one and applying Lemmas \ref{prop:eder} and \ref{prop:edertr}.
\end{proof}

\begin{remark}
By combining Lemma \ref{prop:edertr} and Theorem \ref{prop:Rpf} \eqref{f-la:Rpf}, one obtains
the estimate \eqref{f-la:mest}.
\end{remark}

\section{Multiple spectral measures}

\label{sec:mm}

We will need the fact that certain finitely additive ``multiple
spectral measures" extend to countably additive measures.

\begin{thm}\label{prop:m}
Let $2\leq p\in\Nats$ and let $E_1,E_2,\ldots,E_p$ be projection-valued
Borel measures from $\Reals$ into $\Mcal$. Suppose that
$V_1,\ldots,V_p$ belong to $\ncs{2}$. Assume that either $\tau$ is the
standard trace or $p=2$. Then there is a unique (complex) Borel measure
$m$ on $\Reals^p$ with total variation not exceeding the product
$\norm{V_1}_2\norm{V_2}_2\cdots\,\norm{V_p}_2$, whose
value on rectangles is given by
\[m(A_1\times A_2\times\cdots\times A_p)
=\tau\big[E_1(A_1)V_1E_2(A_2)V_2\dots V_{p-1}E_p(A_p)V_p\big]\]
for all Borel subsets $A_1,A_2,\ldots,A_p$ of $\Reals$.
\end{thm}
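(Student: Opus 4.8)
The plan is to reduce the statement to a Hilbert-space computation and then invoke the classical fact that the bilinear form $(\xi,\eta)\mapsto \tau[E_1(\cdot)\,|\xi\rangle\langle\eta|\,E_2(\cdot)]$-type set functions extend to measures when the ``weights'' sit in $\ncs 2$. Concretely, write $m(A_1\times\cdots\times A_p)=\tau[E_1(A_1)V_1\cdots V_{p-1}E_p(A_p)V_p]$ and, using cyclicity of $\tau$, regroup the product as $\tau[(V_pE_1(A_1)V_1)\,E_2(A_2)\,(V_2E_3(A_3)V_3)\cdots]$ so that the problem becomes one about finitely many spectral projections interlaced with fixed Hilbert-Schmidt operators. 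The base case $p=2$ is the assertion that $A_1\times A_2\mapsto\tau[E_1(A_1)V_1E_2(A_2)V_2]$ extends to a Borel measure with total variation $\le\norm{V_1}_2\norm{V_2}_2$; this is exactly the statement underlying the double operator integral construction (it is the content of the references \cite{Birman96,Pavlov} quoted in the introduction, and in the von Neumann setting follows because $V_1^*$ and $V_2$ define, via $\xi\mapsto\langle V_2\cdot,\xi\rangle$ etc., a trace-class-valued setup; one checks that for a simple function $\sum c_{ij}\chi_{A_i\times B_j}$ the associated operator has trace norm controlled by $\norm{V_1}_2\norm{V_2}_2$ times the sup of $|c_{ij}|$, so the set function is a bounded bilinear functional on $C_0(\Reals)\otimes C_0(\Reals)$ and the Riesz representation / Grothendieck-type argument yields the measure). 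I would state this $p=2$ case as a lemma (or cite it) and take its total-variation bound as known.

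For the inductive step in the case $\tau$ is the standard trace on $\BH$, I would proceed by induction on $p$, peeling off one factor at a time. Suppose the result holds for $p-1$. Fix Borel sets and write
\[
m(A_1\times\cdots\times A_p)=\tau\big[\,(E_1(A_1)V_1\cdots V_{p-2}E_{p-1}(A_{p-1})V_{p-1})\,E_p(A_p)\,V_p\,\big].
\]
The idea is to factor $V_{p-1}=B_{p-1}C_{p-1}$ and $V_p=B_pC_p$ with $B,C$ each Hilbert-Schmidt of norm $\norm{V}_2^{1/2}$ (polar-decomposition-style splitting), so that $C_{p-1}E_p(A_p)B_p$ is trace class with trace norm $\le\norm{V_{p-1}}_2^{1/2}\norm{V_p}_2^{1/2}\cdot\big(\text{something}\big)$; better, keep $E_p(A_p)$ between two Hilbert-Schmidt halves and realize the whole expression as $\tau$ of a product in which the ``new'' measure variable $A_p$ enters only through a single spectral projection sandwiched between two Hilbert-Schmidt operators whose product over the remaining variables is a measurable $\ncs 1$-valued function. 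Then apply the $p=2$ case to the pair of variables $(A_{p-1},A_p)$ with the other projections frozen (this requires a Fubini-type statement: for fixed $A_1,\dots,A_{p-2}$ the two-variable set function in $(A_{p-1},A_p)$ extends to a measure $m_{A_1,\dots,A_{p-2}}$, uniformly bounded in total variation, and these cohere), and integrate against the $(p-2)$-variable measure supplied by the inductive hypothesis. The total-variation bound multiplies correctly because at each splitting step we lose exactly a factor $\norm{V_j}_2$. Uniqueness is automatic: rectangles generate the Borel $\sigma$-algebra of $\Reals^p$ and form a $\pi$-system, so any two finite Borel measures agreeing on them agree.

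The main obstacle is making the inductive ``peeling'' rigorous in the operator-valued (rather than scalar) setting: one must show that the partially-evaluated object
\[
(s_1,\dots,s_{p-1})\longmapsto E_1(ds_1)V_1\cdots V_{p-1}E_p(\Reals)V_p
\]
or rather the appropriate $\ncs1$-valued set function in the last variable, is measurable in the $so^*$ sense and uniformly $\ncs1$-bounded so that Proposition~\ref{prop:so*} (interchange of $\tau$ and integration) and a vector-valued Fubini apply; this is where the hypothesis $V_j\in\ncs2$ is essential and where the restriction to the \emph{standard} trace enters, since for a general semifinite $\tau$ the intermediate products need not lie in $\ncs1$ (indeed the counterexample promised in section~\ref{sec:mm} shows the conclusion can fail). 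I would therefore organize the $\BH$ proof around Hilbert-Schmidt factorizations and the elementary identity $\tau[AE(A_p)B]=\langle$ operator-valued measure $\rangle$, checking the total-variation estimate at the level of finite simple functions and passing to the limit by the Riesz representation theorem, exactly as in the $p=2$ base case but with the scalar coefficients replaced by the inductively constructed measure on $\Reals^{p-1}$. For $p=2$ and general $\tau$, no induction is needed and the argument is the verbatim $\ncs2$-bilinearity computation.
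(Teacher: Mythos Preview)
The paper's proof is much more direct than your proposal. It observes (citing \cite{bimeasure}) that it suffices to bound the variation of $m$ on rectangles by $\norm{V_1}_2\cdots\norm{V_p}_2$, and that this bound is obtained exactly as in Pavlov \cite{Pavlov}: given finite partitions $\{A_j^{(i_j)}\}_{i_j}$ of $\Reals$, one estimates $\sum_{i_1,\ldots,i_p}\big|\tau[E_1(A_1^{(i_1)})V_1\cdots E_p(A_p^{(i_p)})V_p]\big|$ directly by iterated Cauchy--Schwarz in the Hilbert--Schmidt inner product. No induction on $p$, no operator-valued Fubini, no peeling is used; the same computation handles the general-$\tau$ case when $p=2$.

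Your inductive route has two concrete problems. First, the factorization ``$V=BC$ with $B,C$ each Hilbert--Schmidt of norm $\norm{V}_2^{1/2}$'' is not available for a general $V\in\ncs{2}$: polar decomposition gives factors in $\ncs{4}$, not $\ncs{2}$, and a product of two Hilbert--Schmidt operators is already trace class, so such a splitting would force $V\in\ncs{1}$. Second, and more fundamental, the step ``for fixed $A_1,\dots,A_{p-2}$ the two-variable set function extends, uniformly bounded, and these cohere; then integrate against the $(p-2)$-variable measure'' is itself a bimeasure-to-measure extension, and making it rigorous requires precisely a uniform total-variation bound on rectangles --- i.e.\ the Pavlov estimate you are trying to sidestep. (Separately countably additive set functions on products need not extend to finite measures without this; that is exactly the point of the reference \cite{bimeasure}.) So the induction does not bypass the core inequality; once you prove that inequality, the inductive scaffolding becomes superfluous.
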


\begin{proof}
It is enough (see, e.g., \cite[Theorem 2.12]{bimeasure} for $p=2$) to prove that the variation of the set function $m$ on the rectangles of $\Reals^p$ is bounded by $\norm{V_1}_2\norm{V_2}_2\cdots\,\norm{V_p}_2$, which can be accomplished completely analogously to the proof of \cite[Theorem 1]{Pavlov} (see also \cite{Birman96}).
\end{proof}


\begin{remark}
For $\tau$ the standard trace, the bound for the total variation in Theorem \ref{prop:m} was proved in \cite[Theorem 1]{Pavlov}. Theorem \ref{prop:m} with $\tau$ standard was also obtained in \cite{Birman96}. The proof in \cite{Birman96} is based on the facts that a Hilbert-Schmidt operator can be approximated by finite-rank operators in the norm $\norm{\cdot}_2$ and that for rank-one perturbations $V_1,\dots,V_p$ and $\tau$ the standard trace, the set function $m$ decomposes into a product of scalar measures. It is classical that a direct product of countably additive measures always has a countably-additive extension to the $\sigma$-algebra generated by the direct product of the $\sigma$-algebras involved. The argument of \cite{Birman96} cannot be directly extended to the case of a general trace. For a general trace $\tau$, the set function $m$ is known to be of bounded variation only if $p=2$. Technically, this constraint is explained by the fact that in general $\norm{\cdot}_p$ is not dominated by $\norm{\cdot}_2$, as distinct from the particular case of  the standard trace $\tau$. A counterexample constructed further in this section demonstrates that $p=2$ is not only a technical constraint.
\end{remark}

\begin{cor} \label{prop:m.x}
Let $2\leq p\in\Nats$ and let $E_1,E_2,\ldots,E_p$ be projection-valued
Borel measures from $\Reals$ to $\Mcal$. Suppose that
$V_1,\ldots,V_p$ belong to $\ncs{2}$. Assume that either $\tau$ is the
standard trace or $p=2$. Then there is a unique (complex) Borel measure
$m_1$ on $\Reals^{p+1}$ with total variation not exceeding the product
$\norm{V_1}_2\norm{V_2}_2\cdots\,\norm{V_p}_2$, whose
value on rectangles of $\Reals^{p+1}$ is given by
\[m_1(A_1\times A_2\times\cdots\times A_p\times A_{p+1})
=\tau\big[E_1(A_1)V_1E_2(A_2)V_2\dots V_{p-1}E_p(A_p)V_p E_1(A_{p+1})\big],\]
for all Borel subsets $A_1,A_2,\ldots,A_p,A_{p+1}$ of $\Reals$.
\end{cor}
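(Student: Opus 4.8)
The plan is to deduce this corollary directly from Theorem~\ref{prop:m}: cyclicity of the trace lets one absorb the terminal copy of $E_1$, after which the desired multiple spectral measure on $\Reals^{p+1}$ is obtained by transporting the measure supplied by Theorem~\ref{prop:m} on $\Reals^p$ along a diagonal embedding $\Reals^p\hookrightarrow\Reals^{p+1}$.

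In detail, I would proceed as follows. First, apply Theorem~\ref{prop:m} to $E_1,\dots,E_p$ and $V_1,\dots,V_p$ (the hypotheses are identical) to obtain the Borel measure $m$ on $\Reals^p$ with $\norm{m}\leq\norm{V_1}_2\norm{V_2}_2\cdots\norm{V_p}_2$ and
\[m(A_1\times\dots\times A_p)=\tau\big[E_1(A_1)V_1E_2(A_2)V_2\dots V_{p-1}E_p(A_p)V_p\big].\]
Next, introduce the continuous, hence Borel, injection $\iota\colon\Reals^p\to\Reals^{p+1}$, $\iota(\la_1,\dots,\la_p)=(\la_1,\dots,\la_p,\la_1)$, and set $m_1:=\iota_*m$, the push-forward. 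Then $m_1$ is a finite complex Borel measure on $\Reals^{p+1}$, and from the general estimate $|\iota_*m|\leq\iota_*|m|$ one gets $\norm{m_1}\leq\norm{m}\leq\norm{V_1}_2\norm{V_2}_2\cdots\norm{V_p}_2$, the required bound on the total variation. To identify $m_1$ on a rectangle, observe that $\iota^{-1}(A_1\times\dots\times A_{p+1})=(A_1\cap A_{p+1})\times A_2\times\dots\times A_p$, so
\[m_1(A_1\times\dots\times A_{p+1})=\tau\big[E_1(A_1\cap A_{p+1})V_1E_2(A_2)V_2\dots V_{p-1}E_p(A_p)V_p\big];\]
on the other hand, since $p\geq2$ and $V_1,V_2\in\ncs{2}$, the operator $E_1(A_1)V_1E_2(A_2)\dots V_pE_1(A_{p+1})$ lies in $\ncs{1}$, so cyclicity of the trace and the projection identity $E_1(A_{p+1})E_1(A_1)=E_1(A_1\cap A_{p+1})$ show this equals $\tau[E_1(A_1)V_1E_2(A_2)V_2\dots V_{p-1}E_p(A_p)V_pE_1(A_{p+1})]$. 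Finally, uniqueness follows because a finite complex Borel measure on $\Reals^{p+1}$ is determined by its values on measurable rectangles, which form a generating $\pi$-system.

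I do not expect a genuine obstacle here; the work is in setting up the reduction cleanly. The two points deserving a line of justification are that the product $E_1(A_1)V_1\dots V_pE_1(A_{p+1})$ is trace class (so cyclicity applies), which uses $p\geq2$ together with $V_1,V_2\in\ncs{2}$, and the observation that the extra terminal factor $E_1(A_{p+1})$ breaks the ``one perturbation per spectral measure'' pattern of Theorem~\ref{prop:m} --- which is exactly why one routes through cyclicity rather than attempting to invoke Theorem~\ref{prop:m} directly with $p+1$ spectral measures.
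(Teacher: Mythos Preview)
Your proof is correct and follows essentially the same approach as the paper: both use cyclicity of $\tau$ together with $E_1(A_{p+1})E_1(A_1)=E_1(A_1\cap A_{p+1})$ to reduce the $(p+1)$-fold set function to the $p$-fold one from Theorem~\ref{prop:m}. The only cosmetic difference is in packaging: the paper, after writing $m_1(A_1\times\cdots\times A_{p+1})=\tau[E_1(A_1\cap A_{p+1})V_1\cdots E_p(A_p)V_p]$, simply says one repeats the total-variation argument of \cite{Birman96,Pavlov} on rectangles of $\Reals^{p+1}$, whereas you invoke Theorem~\ref{prop:m} as a black box and transport the resulting measure $m$ along the diagonal embedding $\iota(\la_1,\ldots,\la_p)=(\la_1,\ldots,\la_p,\la_1)$. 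Your push-forward formulation is slightly cleaner, since it cites the theorem rather than its proof, and makes the variation bound $\norm{m_1}\le\norm{m}$ immediate from $|\iota_*m|\le\iota_*|m|$.
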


\begin{proof}
It is straightforward to see that
\[m_1(A_1\times A_2\times\cdots\times A_p\times A_{p+1})
=\tau\big[E_1(A_1\cap A_{p+1})V_1E_2(A_2)V_2\dots V_{p-1}E_p(A_p)V_p\big].\]
By repeating the argument of \cite{Birman96,Pavlov}, one can see that the total variation of the set function $m_1$ is bounded on the rectangles of $\Reals^{p+1}$ by $\norm{V_1}_2\norm{V_2}_2\cdots\,\norm{V_p}_2$. Thus, $m_1$ extends to a unique complex Borel measure on $\Reals^{p+1}$ with variation bounded by $\norm{V_1}_2\norm{V_2}_2\cdots\,\norm{V_p}_2$.
\end{proof}

\begin{cor}\label{prop:m.k}
Let $2\leq p\in\Nats$, $E_1,\ldots,E_p$ projection-valued
Borel measures from $\Reals$ into $\Mcal$, and $\tau$ a finite trace.
Suppose that $V_1,\ldots,V_{p-1}$ belong to $\Mcal$. Assume that
either $\tau$ is the standard trace or $p=2$. Then there is a unique
complex Borel measure $m_2$ on $\Reals^p$ with total variation not
exceeding
$\norm{V_1}_2\norm{V_2}_2\cdots\,\norm{V_{p-1}}_2\tau(I)^{1/2}$,
whose value on rectangles is given by
\[m_2(A_1\times A_2\times\cdots\times
A_p)=\tau\big[E_1(A_1)V_1E_2(A_2)V_2\dots V_{p-1}E_p(A_p)\big]\]
for all Borel subsets $A_1,A_2,\ldots,A_p$ of $\Reals$.
\end{cor}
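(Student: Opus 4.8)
The plan is to obtain this corollary directly from Theorem~\ref{prop:m} by inserting the identity operator $I$ as a ``$p$th perturbation''. The key point is that since $\tau$ is finite, every element of $\Mcal$ belongs to $L_2(\Mcal,\tau)$, and in particular $V_1,\dots,V_{p-1}\in\ncs{2}$ and $I\in\ncs{2}$ with $\norm{I}_2=\tau(|I|^2)^{1/2}=\tau(I)^{1/2}$. Moreover the dichotomy hypothesis (either $\tau$ is the standard trace or $p=2$) is exactly the one required in Theorem~\ref{prop:m}, so nothing is lost in applying that theorem.

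First I would apply Theorem~\ref{prop:m} to the $p$ projection-valued measures $E_1,\dots,E_p$ and the $p$ elements $V_1,\dots,V_{p-1},I$ of $\ncs{2}$. This yields a unique complex Borel measure $m$ on $\Reals^p$ of total variation at most $\norm{V_1}_2\cdots\norm{V_{p-1}}_2\norm{I}_2=\norm{V_1}_2\cdots\norm{V_{p-1}}_2\,\tau(I)^{1/2}$, whose value on a rectangle $A_1\times\cdots\times A_p$ is
\[\tau\big[E_1(A_1)V_1E_2(A_2)V_2\cdots V_{p-1}E_p(A_p)I\big]
=\tau\big[E_1(A_1)V_1E_2(A_2)V_2\cdots V_{p-1}E_p(A_p)\big].\]
Hence $m_2:=m$ has the prescribed values on rectangles and the claimed bound on its total variation. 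Alternatively, one may simply repeat the variation estimate of \cite{Birman96,Pavlov} with $I$ occupying the last slot, exactly as in the proof of Corollary~\ref{prop:m.x}.

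For uniqueness I would note that the rectangles of $\Reals^p$ form a $\pi$-system generating the Borel $\sigma$-algebra, so a finite complex Borel measure on $\Reals^p$ is completely determined by its values on rectangles; thus $m_2$ is the only measure with the stated property. I do not expect any genuine obstacle here: the whole content is the observation that finiteness of $\tau$ makes $I$ a legitimate $\ncs{2}$-perturbation with $\ncs{2}$-norm $\tau(I)^{1/2}$, after which Theorem~\ref{prop:m} applies verbatim.
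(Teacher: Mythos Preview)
Your proof is correct and is essentially identical to the paper's own argument, which states in one line that the corollary is an immediate consequence of Theorem~\ref{prop:m} applied with $V_p=I$. Your additional remarks about $\norm{I}_2=\tau(I)^{1/2}$ and uniqueness via the $\pi$-system of rectangles merely spell out what the paper leaves implicit.
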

\begin{proof}
It is an immediate consequence of Theorem \ref{prop:m} applied to
$V_p=I$.
\end{proof}

In the sequel, we will work with the set functions
\begin{align*}
m_{p,H_0,V}(A_1\times A_2\times\cdots\times A_p)
=\tau\big[E_{H_0}(A_1)VE_{H_0}(A_2)V\dots VE_{H_0}(A_p)V\big],
\end{align*}
\begin{align*}
&m_{p,H_0,V}^{(1)}(A_1\times A_2\times\cdots\times A_p\times A_{p+1})\\
&\quad=\tau\big[E_{H_0}(A_1)VE_{H_0}(A_2)V\dots VE_{H_0}(A_p)VE_{H_0}(A_{p+1})\big],
\end{align*}
\begin{align*}
&m_{p,H_0,V}^{(2)}(A_1\times A_2\times\cdots\times A_p\times A_{p+1})\\
&\quad=\tau\big[E_{H_0+V}(A_1)VE_{H_0}(A_2)V\dots VE_{H_0}(A_p)VE_{H_0}(A_{p+1})\big],
\end{align*} and their countably-additive extensions (when they exist). Here $A_j$ are measurable subsets of $\Reals$, $H_0=H_0^*$ is affiliated with $\Mcal$, and $V=V^*\in\ncs{2}$.

In the next result, freeness of $(zI-H_0)^{-1}$ and $V$ means freeness of the algebra generated by the spectral projections of $H_0$ and the unital algebra generated by $V$.
\begin{thm}\label{prop:m_free}
Let $\tau$ be a finite trace normalized by $\tau(I)=1$ and let $H_0=H_0^*$ be affiliated with $\Mcal$ and $V=V^*\in\Mcal$. Assume that $(zI-H_0)^{-1}$ and $V$ are free. Then the set functions $m_{p,H_0,V}$ and $m_{p,H_0,V}^{(1)}$ extend to countably additive measures of bounded variation.
\end{thm}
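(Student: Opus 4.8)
The plan is to show that, regarded as a set function on rectangles, $m_{p,H_0,V}$ is a finite $\Complex$-linear combination of ``diagonal product'' set functions, each of which patently extends to a countably additive Borel probability measure on $\Reals^p$; the claim for $m_{p,H_0,V}^{(1)}$ then drops out by a short manipulation. Throughout, write $\mu(A)=\tau[E_{H_0}(A)]$ for the scalar spectral measure of $H_0$ (a probability measure, since $\tau(I)=1$) and $P_j=E_{H_0}(A_j)$. We shall use that the $P_j$ commute and that $P_{j_1}P_{j_2}\cdots P_{j_k}=E_{H_0}(A_{j_1}\cap\cdots\cap A_{j_k})$, so $\tau[P_{j_1}\cdots P_{j_k}]=\mu(A_{j_1}\cap\cdots\cap A_{j_k})$ depends only on the set $\{j_1,\dots,j_k\}$.

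First I would isolate the measure-theoretic fact underlying the reduction: for any partition $\mathcal P$ of $\{1,\dots,n\}$ into (disjoint) blocks, the set function $(A_1,\dots,A_n)\mapsto\prod_{S\in\mathcal P}\mu\bigl(\bigcap_{i\in S}A_i\bigr)$ extends to the Borel probability measure $\mu_{\mathcal P}:=\bigotimes_{S\in\mathcal P}(\Delta_S)_{*}\mu$ on $\Reals^n=\prod_{S\in\mathcal P}\Reals^{S}$, where $\Delta_S\colon\Reals\to\Reals^{S}$ is the diagonal embedding; evaluating $\mu_{\mathcal P}$ on a rectangle recovers $\prod_S\mu(\bigcap_{i\in S}A_i)$. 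Hence any finite combination $\sum_{\mathcal P}c_{\mathcal P}\mu_{\mathcal P}$ with $c_{\mathcal P}\in\Complex$ is a complex Borel measure of total variation at most $\sum_{\mathcal P}|c_{\mathcal P}|$ and is the unique Borel extension of its restriction to rectangles. It is worth stressing that the \emph{disjointness} of the blocks is essential: a generic polynomial in the numbers $\mu(\bigcap_{i\in S}A_i)$ need not extend to any measure (for instance $\mu(A_1\cap A_2)\,\mu(A_2\cap A_3)$ fails to be additive in $A_2$).

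The heart of the matter is therefore the claim that there are constants $c_{\mathcal P}\in\Complex$, depending only on the moments $\tau[V^k]$ with $k\le p$, such that
\[
m_{p,H_0,V}(A_1\times\cdots\times A_p)=\tau[P_1VP_2V\cdots P_pV]=\sum_{\mathcal P}c_{\mathcal P}\prod_{S\in\mathcal P}\mu\Bigl(\bigcap_{i\in S}A_i\Bigr),
\]
the sum over all partitions $\mathcal P$ of $\{1,\dots,p\}$ (singleton blocks contributing factors $\mu(A_j)$). This is where the hypothesis—freeness of the commutative algebra generated by $\{P_j\}$ and the unital algebra generated by $V$—is used. I would obtain it by expanding $\tau[P_1VP_2V\cdots P_pV]$ using only the two basic facts: (a) the trace of an alternating word whose letters lie alternately in the two subalgebras and each have trace zero vanishes (the defining property of freeness, \cite{VDN}); and (b) centering, i.e.\ replacing a product of consecutive $P_j$'s, or a power of $V$, by its centered part plus its trace—the latter being $\mu$ of an intersection of the relevant $A_j$'s, respectively a moment of $V$. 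Because the $P_j$ commute, a block of consecutive $P$'s collapses to $E_{H_0}$ of the intersection of the corresponding $A_j$'s; iterating (a)+(b) terminates, and one checks that in each surviving (scalar) term the $A_j$'s it involves fall into \emph{pairwise disjoint} blocks, i.e.\ into a genuine partition of $\{1,\dots,p\}$, with coefficient a product of $V$-moments. Equivalently, one may invoke the universal moment formula for free families directly: expanding $\tau[a_1b_1\cdots a_pb_p]$ (with $a_i=P_i$, $b_i=V$) over non-crossing partitions $\pi=\pi_a\cup\pi_b$ having no block that meets both algebras, the $a$-contribution is a sum of products $\prod\mu(\bigcap A)$ over a partition of $\{1,\dots,p\}$ refining $\pi_a$, while the $b$-contribution supplies the scalar $c_{\mathcal P}$.

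Finally I would transfer the result to $m_{p,H_0,V}^{(1)}$. By cyclicity of the trace,
\[
m_{p,H_0,V}^{(1)}(A_1\times\cdots\times A_{p+1})=\tau\bigl[E_{H_0}(A_1\cap A_{p+1})\,V\,E_{H_0}(A_2)\,V\cdots V\,E_{H_0}(A_p)\,V\bigr],
\]
which has the same shape as $m_{p,H_0,V}$ with the first projection replaced by $E_{H_0}(A_1\cap A_{p+1})$—still a spectral projection of $H_0$, hence still commuting with the others and free from $V$. Applying the claim expresses this as $\sum_{\mathcal P}c_{\mathcal P}\prod_{S\in\mathcal P}\mu(\bigcap_{i\in S}\widetilde A_i)$ with $\widetilde A_1=A_1\cap A_{p+1}$ and $\widetilde A_i=A_i$ for $i\ge2$; adjoining the index $p+1$ to the block of $\mathcal P$ containing $1$ rewrites each term as $\prod_{S\in\widehat{\mathcal P}}\mu(\bigcap_{i\in S}A_i)$ for a partition $\widehat{\mathcal P}$ of $\{1,\dots,p+1\}$, so $m_{p,H_0,V}^{(1)}$ is again a finite combination of the measures $\mu_{\widehat{\mathcal P}}$ and thus extends to a complex Borel measure of bounded variation on $\Reals^{p+1}$. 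The step I expect to be the main obstacle is the displayed moment identity—more precisely, verifying that freeness forces the disjoint-block structure of the surviving scalar terms, since it is exactly that structure (and not mere polynomiality in the $\mu(\bigcap A_i)$) that makes extension to a genuine measure possible.
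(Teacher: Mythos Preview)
Your proposal is correct and follows essentially the same route as the paper: the paper invokes the moment--cumulant formula for free variables (citing Speicher) to write $m_{p,H_0,V}(A_1\times\cdots\times A_p)=\sum_{\pi\in\operatorname{NC}(p)}k_{K(\pi)}[V,\ldots,V]\prod_{B\in\pi}\tau\bigl(E_{H_0}(\cap_{i\in B}A_i)\bigr)$, then identifies each factor $\prod_{B\in\pi}\tau\bigl(E_{H_0}(\cap_{i\in B}A_i)\bigr)$ as the push-forward of a product of copies of $\tau\circ E_{H_0}$ under diagonal embeddings---precisely your measures $\mu_{\mathcal P}$. The only cosmetic differences are that the paper reaches the displayed identity in one stroke via the moment--cumulant/Kreweras formalism (your second suggested route) rather than by iterated centering, and disposes of $m_{p,H_0,V}^{(1)}$ by calling it ``completely analogous'' rather than spelling out the cyclicity reduction you give.
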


\begin{proof} We prove the claim for the function $m_{p,H_0,V}$; the case of $m_{p,H_0,V}^{(1)}$ is completely analogous.
Using the moment--cumulant formula (see \cite[Theorem~2.17]{Speicher}), and that $\prod_iE_{H_0}(A_i)=E_{H_0}(\cap_iA_i)$
we have
\begin{align}\label{f-la:mfree*}
m_{p,H_0,V}(A_1\times\dots\times A_p)&=\tau\big[E_{H_0}(A_1)V\dots E_{H_0}(A_p)V\big]\\\nonumber
&=\sum_{\pi=\{B_1,\ldots,B_\ell\}\in\operatorname{NC}(p)} k_{K(\pi)}[V,\ldots,V]\prod_{j=1}^\ell\tau\big(E_{H_0}(\cap_{i\in B_j}A_i)\big),
\end{align}
where $\operatorname{NC}(p)$ is the lattice of all noncrossing partitions of $\{1,\ldots,p\}$ and where
$k_{K(\pi)}[V,\ldots,V]$ is the product of cumulants of V, associated to the block structure of the Kreweras
complement $K(\pi)$ of $\pi$;
thus, $k_{K(\pi)}[V,\ldots,V]$ is equal to a polynomial (that depends on $\pi$) in $p$ variables,
evaluated at $\tau(V),\tau(V^2),\ldots,\tau(V^p)$.
Given $\pi=\{B_1,\ldots,B_\ell\}\in\operatorname{NC}(p)$, the measure
\begin{align}
\label{f-la:mfree*1}
\gamma_{p,\pi}:A_1\times\cdots\times A_p\mapsto\prod_{j=1}^\ell\tau\big(E_{H_0}(\cap_{i\in B_j}A_i)\big)
\end{align}
is the push--forward of the $\ell$--fold product $\times_1^\ell(\tau\circ E_{H_0})$
of the spectral distribution measure of $H_0$ (with respect to $\tau$)
under the mapping of $\Reals^\ell$ onto the product of diagonals in $\Reals^p$ according to the block structure $B_1,\ldots,B_\ell$.
Each such push--forward is a probability measure.
Thus, we see that $m_{p,H_0,V}$ is a linear combination of probability measures, and has finite total variation.
\end{proof}

In some cases used in the paper, the measure $m_2$ is known to be real-valued and the measure $m$ non-negative.

\begin{lemma}\label{prop:R_spline}Let $\tau$ be a finite trace. Let $H_0=H_0^*$ be affiliated with $\Mcal$ and $V=V^*\in\Mcal$. Then the measure $m_{1,H_0,V}^{(2)}$ is real-valued.
\end{lemma}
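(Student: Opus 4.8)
The plan is to show that $m_{1,H_0,V}^{(2)}$ agrees with its own complex conjugate, which (since a complex Borel measure is determined by its values on rectangles, and rectangles form a $\pi$-system generating the Borel $\sigma$-algebra) reduces to checking that
\[
\overline{m_{1,H_0,V}^{(2)}(A_1\times A_2)}=m_{1,H_0,V}^{(2)}(A_1\times A_2)
\]
for all Borel sets $A_1,A_2\subseteq\Reals$. By definition,
\[
m_{1,H_0,V}^{(2)}(A_1\times A_2)=\tau\big[E_{H_0+V}(A_1)VE_{H_0}(A_2)\big].
\]
First I would note that $E_{H_0+V}(A_1)$, $V$, and $E_{H_0}(A_2)$ are all self-adjoint (the spectral projections because they are projections, and $V=V^*$ by hypothesis), and that this product lies in $\ncs{1}$: indeed $V\in\Mcal$ and $\tau$ is finite, so $\norm{E_{H_0+V}(A_1)VE_{H_0}(A_2)}_1\leq\norm{V}\,\tau(I)<\infty$, and $\tau$ is well defined on it.

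The key step is then the computation
\[
\overline{\tau\big[E_{H_0+V}(A_1)VE_{H_0}(A_2)\big]}
=\tau\big[\big(E_{H_0+V}(A_1)VE_{H_0}(A_2)\big)^*\big]
=\tau\big[E_{H_0}(A_2)VE_{H_0+V}(A_1)\big],
\]
using that $\overline{\tau(T)}=\tau(T^*)$ for $T\in\ncs{1}$ (a standard property of the trace on a semi-finite von Neumann algebra) together with $(ABC)^*=C^*B^*A^*$ and the self-adjointness of the three factors. Finally, by cyclicity of the trace (valid here because all factors are bounded and $\tau$ is finite, so every cyclic rearrangement stays in $\ncs{1}$),
\[
\tau\big[E_{H_0}(A_2)VE_{H_0+V}(A_1)\big]=\tau\big[E_{H_0+V}(A_1)VE_{H_0}(A_2)\big]
=m_{1,H_0,V}^{(2)}(A_1\times A_2).
\]
Combining the two displays gives $\overline{m_{1,H_0,V}^{(2)}(A_1\times A_2)}=m_{1,H_0,V}^{(2)}(A_1\times A_2)$, so the measure is real-valued on rectangles, hence on all Borel sets.

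I do not anticipate a genuine obstacle here; the statement is essentially the observation that $\tau[PVQ]$ is real whenever $P,V,Q$ are self-adjoint and the product is trace-class, via $\overline{\tau(T)}=\tau(T^*)$ and cyclicity. The only point requiring a word of care is the passage from "real on rectangles" to "real as a measure": since $m_{1,H_0,V}^{(2)}$ and $\overline{m_{1,H_0,V}^{(2)}}$ are both finite complex Borel measures on $\Reals^2$ agreeing on the $\pi$-system of measurable rectangles, which generates the product Borel $\sigma$-algebra, a standard uniqueness theorem for measures forces them to coincide. (One should also remark that the countably additive extension $m_{1,H_0,V}^{(2)}$ exists here: with $\tau$ finite and $V\in\Mcal$ this is the $p=2$ case covered by Corollary~\ref{prop:m.x} with $V_1=V$, $V_2=I$, $E_1=E_{H_0+V}$, $E_2=E_{H_0}$.)
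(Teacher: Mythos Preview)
Your argument contains a genuine error at the cyclicity step. You claim
\[
\tau\big[E_{H_0}(A_2)VE_{H_0+V}(A_1)\big]=\tau\big[E_{H_0+V}(A_1)VE_{H_0}(A_2)\big]
\]
``by cyclicity of the trace,'' but this is not a cyclic permutation: writing $Q=E_{H_0}(A_2)$, $P=E_{H_0+V}(A_1)$, cyclicity gives $\tau[QVP]=\tau[VPQ]=\tau[PQV]$, none of which is $\tau[PVQ]$. What you have written is the \emph{reversal} $\tau[QVP]=\tau[PVQ]$, and for self-adjoint $P,V,Q$ this identity is exactly equivalent to $\tau[PVQ]\in\Reals$ (since $\overline{\tau[PVQ]}=\tau[(PVQ)^*]=\tau[QVP]$). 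So the step is circular. Relatedly, your summary assertion that ``$\tau[PVQ]$ is real whenever $P,V,Q$ are self-adjoint'' is false in general: with the Pauli matrices $\sigma_x,\sigma_y,\sigma_z$ one has $\Tr(\sigma_x\sigma_y\sigma_z)=2\i$.

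The paper's proof exploits the special structure that $P$ and $Q$ are spectral projections of $H_0+V$ and $H_0$, respectively, and that $V=(H_0+V)-H_0$. Writing
\[
\tau[PVQ]=\tau[P(H_0+V)Q]-\tau[PH_0Q],
\]
one uses $Q^2=Q$, $P^2=P$ and (genuine) cyclicity to rewrite the two terms as $\tau\big[Q\,(P(H_0+V))\,Q\big]$ and $\tau\big[P\,(H_0Q)\,P\big]$. Since $P$ commutes with $H_0+V$ and $Q$ commutes with $H_0$, the middle factors $P(H_0+V)$ and $H_0Q$ are self-adjoint, so each term is the trace of a self-adjoint operator and hence real. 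This is the missing idea your argument needs.
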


\begin{proof}
For arbitrary measurable subsets $A_1$ and $A_2$ of $\Reals$,
\begin{align*}&\tau\big[E_{H_0+V}(A_1)VE_{H_0}(A_2)\big]\\&\quad=
\tau\big[E_{H_0+V}(A_1)(H_0+V)E_{H_0}(A_2)\big]-
\tau\big[E_{H_0+V}(A_1)H_0 E_{H_0}(A_2)\big]\\&\quad=
\tau\big[E_{H_0}(A_2)\big(E_{H_0+V}(A_1)(H_0+V)\big)E_{H_0}(A_2)\big]\\&\quad\quad-
\tau\big[E_{H_0+V}(A_1)\big(H_0 E_{H_0}(A_2)\big)E_{H_0+V}(A_1)\big],
\end{align*} where the operators
\[E_{H_0}(A_2)\big(E_{H_0+V}(A_1)(H_0+V)\big)E_{H_0}(A_2)\;\text{ and
}\; E_{H_0+V}(A_1)\big(H_0 E_{H_0}(A_2)\big)E_{H_0+V}(A_1)\] are
self-adjoint. Therefore, $m_{1,H_0,V}^{(2)}(A_1\times A_2)\in\Reals$, and the extension of
$m_{1,H_0,V}^{(2)}$ to the Borel subsets of $\Reals^2$ is real-valued.
\end{proof}

\begin{lemma}\label{prop:m_positive}
Let $H_0=H_0^*$ be affiliated with $\Mcal$ and $V=V^*\in\ncs{2}$. Then the measure $m_{2,H_0,V}$ is non-negative.
\end{lemma}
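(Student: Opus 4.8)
The plan is to prove the inequality first on rectangles, where it reduces to an elementary trace identity, and then to transfer it to the full measure by invoking the uniqueness of the countably additive extension furnished by Theorem~\ref{prop:m} (with $p=2$).

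First I would record the identity on rectangles. Fix Borel sets $A_1,A_2\subseteq\Reals$ and write $P=E_{H_0}(A_1)$ and $Q=E_{H_0}(A_2)$, which are self-adjoint projections in $\Mcal$. Since $V=V^*\in\ncs2\subseteq\ncl2$ and $\ncl2$ is an $\Mcal$-bimodule with $\norm{QVP}_2\leq\norm{Q}\,\norm{V}_2\,\norm{P}<\infty$, the operator $QVP$ lies in $\ncl2$, and $(QVP)^*(QVP)=PVQ\cdot QVP=PVQVP\in\ncl1$. Using $P^2=P$, $Q^2=Q$, $V=V^*$ and cyclicity of $\tau$,
\begin{align*}
m_{2,H_0,V}(A_1\times A_2)&=\tau\big[E_{H_0}(A_1)VE_{H_0}(A_2)V\big]=\tau\big[PVQVP\big]\\
&=\tau\big[(QVP)^*(QVP)\big]=\norm{E_{H_0}(A_2)VE_{H_0}(A_1)}_2^2\ \geq\ 0.
\end{align*}
Thus the restriction of $m_{2,H_0,V}$ to the semiring of rectangles of $\Reals^2$ is a non-negative finite pre-measure, of total mass $m_{2,H_0,V}(\Reals\times\Reals)=\tau(V^2)=\norm{V}_2^2$.

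Next I would apply the Hahn--Kolmogorov extension theorem to this non-negative pre-measure: it extends to a non-negative Borel measure on $\Reals^2$ agreeing with the given values on all rectangles, and since the pre-measure is finite the extension is unique. On the other hand, Theorem~\ref{prop:m} with $p=2$ provides the complex Borel measure $m_{2,H_0,V}$, which also agrees with these values on rectangles. Because rectangles form a $\pi$-system generating the Borel $\sigma$-algebra of $\Reals^2$ and both measures are finite, Dynkin's $\pi$--$\lambda$ theorem forces them to coincide, whence $m_{2,H_0,V}\geq0$. The only step that genuinely needs care here is this transfer from non-negativity on rectangles to non-negativity of the measure, i.e.\ the uniqueness of the measure extension; the rest is the short projection computation above.

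An alternative, more conceptual route makes the non-negativity transparent without appealing to Theorem~\ref{prop:m}: on the Hilbert space $\ncl2$ equipped with the inner product $\langle S,T\rangle=\tau(T^*S)$, the assignments $A_1\mapsto(T\mapsto E_{H_0}(A_1)T)$ and $A_2\mapsto(T\mapsto TE_{H_0}(A_2))$ are commuting projection-valued measures, hence combine into a single projection-valued measure $F$ on $\Reals^2$ with $F(A_1\times A_2)T=E_{H_0}(A_1)TE_{H_0}(A_2)$; then $m_{2,H_0,V}(\cdot)=\langle F(\cdot)V,V\rangle$ is visibly a non-negative finite measure of mass $\norm{V}_2^2$. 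I would present the first argument as the main one since it stays closest to the machinery already set up in Theorem~\ref{prop:m}.
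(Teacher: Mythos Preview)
Your proof is correct and follows essentially the same approach as the paper: both show non-negativity on rectangles via cyclicity of $\tau$ (inserting the idempotent $P=E_{H_0}(A_1)$ to write $\tau[PVQV]=\tau[PVQVP]$ with $PVQVP\ge0$), and then pass to the full measure. The paper leaves the transfer from rectangles to the extension implicit, while you spell it out via Hahn--Kolmogorov/Dynkin; your alternative description of $m_{2,H_0,V}$ as $\langle F(\cdot)V,V\rangle$ for a projection-valued measure on $\ncl2$ is a nice addition but not needed.
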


\begin{proof}
For arbitrary measurable subsets $A_1$ and $A_2$ of $\Reals$,
\[\tau\big[E_{H_0}(A_1)VE_{H_0}(A_2)V\big]=
\tau\big[E_{H_0}(A_1)VE_{H_0}(A_2)VE_{H_0}(A_1)\big]\geq 0,\] since
\[\left<E_{H_0}(A_1)VE_{H_0}(A_2)VE_{H_0}(A_1)f,f\right>=
\left<E_{H_0}(A_2)\big(VE_{H_0}(A_1)f\big),\big(VE_{H_0}(A_1)f\big)\right>\geq
0,\] for any $f\in\Hcal$.
\end{proof}

\begin{lemma}\label{prop:no_atoms}Let $\tau$ be a finite trace. Let $H_0=H_0^*$ be an operator affiliated with $\Mcal$ and $V=V^*\in\Mcal$.
Then $m_{k,H_0,V}^{(2)}$ has no atoms on the diagonal $D_{k+1}=\{(\la_1,\la_2,\dots,\la_{k+1})\st\la_1=\la_2=\cdots=\la_{k+1}\in\Reals\}$ of $\Reals^{k+1}$.
\end{lemma}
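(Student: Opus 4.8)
The plan is to show directly that the value of $m_{k,H_0,V}^{(2)}$ on every singleton contained in the diagonal $D_{k+1}$ is zero; since a measure has an atom at a point exactly when its value on that singleton is nonzero, this gives the claim. A point of $D_{k+1}$ has the form $(\la,\dots,\la)$ with $\la\in\Reals$, and the singleton $\{(\la,\dots,\la)\}$ is itself a rectangle, namely $\{\la\}\times\cdots\times\{\la\}$. Hence, by the defining formula for $m_{k,H_0,V}^{(2)}$ on rectangles,
\[
m_{k,H_0,V}^{(2)}\big(\{(\la,\dots,\la)\}\big)
=\tau\big[E_{H_0+V}(\{\la\})\,V\,E_{H_0}(\{\la\})\,V\cdots V\,E_{H_0}(\{\la\})\big].
\]
Thus it suffices to prove that $E_{H_0+V}(\{\la\})\,V\,E_{H_0}(\{\la\})=0$, for then the whole product above collapses to $0$ and its trace vanishes.

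To prove this identity I would use the same device as in the proof of Lemma~\ref{prop:R_spline}. Since $V\in\Mcal$ is bounded, $H_0+V$ is self-adjoint with $\domain{H_0+V}=\domain{H_0}$, and the range of $E_{H_0}(\{\la\})$ is contained in $\domain{H_0}$. On $\domain{H_0}$ one has $V=(H_0+V-\la I)-(H_0-\la I)$, and since $(H_0-\la I)E_{H_0}(\{\la\})=0$ this gives $V\,E_{H_0}(\{\la\})=(H_0+V-\la I)\,E_{H_0}(\{\la\})$. On the other hand, for $x\in\domain{H_0+V}$ the eigenprojection $E_{H_0+V}(\{\la\})$ satisfies $E_{H_0+V}(\{\la\})(H_0+V-\la I)x=0$ (because $E_{H_0+V}(\{\la\})$ commutes with $H_0+V$ on $\domain{H_0+V}$ and $(H_0+V)E_{H_0+V}(\{\la\})=\la E_{H_0+V}(\{\la\})$). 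Applying this with $x$ ranging over the range of $E_{H_0}(\{\la\})\subseteq\domain{H_0}=\domain{H_0+V}$ yields
\[
E_{H_0+V}(\{\la\})\,V\,E_{H_0}(\{\la\})
=E_{H_0+V}(\{\la\})(H_0+V-\la I)\,E_{H_0}(\{\la\})=0 .
\]

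Consequently $E_{H_0+V}(\{\la\})\,V\,E_{H_0}(\{\la\})\,V\cdots V\,E_{H_0}(\{\la\})=0$ for every $\la\in\Reals$, so $m_{k,H_0,V}^{(2)}\big(\{(\la,\dots,\la)\}\big)=0$ for every $\la$, and therefore $m_{k,H_0,V}^{(2)}$ has no atom at any point of $D_{k+1}$. The computation is essentially bookkeeping; the only point requiring a little care is the handling of the (possibly) unbounded $H_0$, which is why one works throughout on $\domain{H_0}=\domain{H_0+V}$ and uses that $E_{H_0}(\{\la\})\Hcal\subseteq\domain{H_0}$. I do not see any genuine obstacle beyond this.
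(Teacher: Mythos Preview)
Your proof is correct and follows essentially the same route as the paper: both reduce to showing $E_{H_0+V}(\{\la\})\,V\,E_{H_0}(\{\la\})=0$ by writing $V=(H_0+V-\la I)-(H_0-\la I)$ on the range of $E_{H_0}(\{\la\})$ and using that $E_{H_0+V}(\{\la\})$ annihilates the range of $H_0+V-\la I$. Your version is slightly more explicit about the domain issues for unbounded $H_0$, but the argument is the same.
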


\begin{proof}By definition of the measure $m_{k,H_0,V}^{(2)}$,
\[m_{k,H_0,V}^{(2)}(\{(\la,\la,\dots,\la)\})=
\tau\left[E_{H_0+V}(\{\la\})VE_{H_0}(\{\la\})\big(VE_{H_0}(\{\la\})\big)^{k-1}\right].\]
We will show that $E_{H_0+V}(\{\la\})VE_{H_0}(\{\la\})$ is the zero
operator.

Let $g$ be an arbitrary vector in $\Hcal$ and let $h=E_{H_0}(\{\la\})g$. Then
$H_0 h=\la h$ and
\begin{align*}&E_{H_0+V}(\{\la\})VE_{H_0}(\{\la\})g\\&=E_{H_0+V}(\{\la\})Vh=
E_{H_0+V}(\{\la\})(H_0+V)h-E_{H_0+V}(\{\la\})H_0 h\\
&=E_{H_0+V}(\{\la\})(H_0+V)h-\la E_{H_0+V}(\{\la\})h
=(H_0+V-\la I)E_{H_0+V}(\{\la\})h=0.\end{align*}
\end{proof}

Upon evaluating a trace, some iterated operator integrals can be written as Lebesgue
integrals with respect to a ``multiple spectral measure".

\begin{lemma}\label{prop:i_to_m_g}Assume the hypothesis of Theorem \ref{prop:m}. Assume that the spectral measures $E_1,E_2,\dots,E_p$ correspond to self-adjoint operators $H_0,H_1,\dots,H_p$ affiliated with $\Mcal$, respectively, and that $V_1,V_2,\dots,V_p\in\ncs{2}$. Let $f_1,f_2,\dots,f_p$ be functions in $C_0^\infty(\Reals)$ (vanishing at infinity). Then
\[\tau[f_1(H_1)V_1 f_2(H_2)V_2\dots f_p(H_p)V_p]=
\int_{\Reals^p}f_1(\la_1)f_2(\la_2)\dots f_p(\la_p)\,dm(\la_1,\la_2,\dots,\la_p),\] with $m$ as in Theorem \ref{prop:m}.
\end{lemma}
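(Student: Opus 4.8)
The plan is to reduce the identity for general $C_0^\infty$ functions to the case of exponentials, where the trace of the iterated operator integral becomes a genuine iterated (hence, by Fubini, multiple) integral against the spectral distributions, and then to recognize the resulting integral as integration against the measure $m$ furnished by Theorem \ref{prop:m}. First I would note that by Theorem \ref{prop:m} the set function $m$ is a (countably additive) complex Borel measure on $\Reals^p$ of finite total variation $\leq\norm{V_1}_2\cdots\norm{V_p}_2$, so that the right-hand side of the claimed identity is a well-defined, bounded linear functional of the tuple $(f_1,\dots,f_p)\in C_0(\Reals)^{\times p}$ with respect to the uniform norm. Likewise, since each $f_j(H_j)\in\Mcal$ with $\norm{f_j(H_j)}\leq\norm{f_j}_\infty$, and $V_j\in\ncs{2}$, the product $f_1(H_1)V_1\cdots f_p(H_p)V_p$ lies in $\ncl{1}$ (the $L_1$-space) with $\ncl{1}$-norm bounded by $\prod_j\norm{f_j}_\infty\norm{V_j}_2$ via Hölder for non-commutative $L_p$-spaces, so the left-hand side is also a bounded functional in the same topology. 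Both sides are thus continuous in $(f_1,\dots,f_p)$, and it suffices to verify the identity on a subset of $C_0^\infty(\Reals)^{\times p}$ whose linear span is dense in the uniform norm on, say, the algebra of functions vanishing at infinity.

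The natural dense class consists of the functions $f_j(\la)=e^{\i t_j\la}$ (more precisely finite linear combinations of their translates/smoothings, or one may take $f_j$ of the form $\int e^{\i t\la}\,d\mu_{f_j}(t)$ with $\mu_{f_j}$ a finite measure, i.e. $f_j\in\mathcal{W}_0$). For $f_j(\la)=e^{\i t_j\la}$ I would expand $f_j(H_j)=e^{\i t_jH_j}$ and write, for fixed $(t_1,\dots,t_p)$,
\begin{align*}
\tau\big[e^{\i t_1H_1}V_1e^{\i t_2H_2}V_2\cdots e^{\i t_pH_p}V_p\big]
&=\int_{\Reals^p}e^{\i t_1\la_1}e^{\i t_2\la_2}\cdots e^{\i t_p\la_p}\,dm(\la_1,\dots,\la_p),
\end{align*}
which is exactly the statement that $m$, as a measure, integrates the exponentials in the way dictated by its values on rectangles. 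To justify this last step I would approximate each $e^{\i t_jH_j}$ by spectral (Riemann-type) sums $\sum_k e^{\i t_j\la_{j,k}}E_j(A_{j,k})$ over a partition $\{A_{j,k}\}_k$ of a large interval containing the (essential) spectrum, converging in the strong operator topology and boundedly in norm. Substituting these sums into the trace and using linearity together with the definition $m(A_1\times\cdots\times A_p)=\tau[E_1(A_1)V_1\cdots E_p(A_p)V_p]$ turns the trace expression into a Riemann sum $\sum_{k_1,\dots,k_p}e^{\i t_1\la_{1,k_1}}\cdots e^{\i t_p\la_{p,k_p}}\,m(A_{1,k_1}\times\cdots\times A_{p,k_p})$ for the integral $\int e^{\i t_1\la_1}\cdots e^{\i t_p\la_p}\,dm$; passing to the limit over refining partitions is legitimate because $m$ has finite total variation and the integrand is bounded and continuous. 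The convergence of the operator side of the Riemann sum to $\tau[e^{\i t_1H_1}V_1\cdots]$ uses joint continuity of multiplication (in the relevant topologies) paired with $V_j\in\ncs{2}\subset\ncl{2}$ and Hölder, exactly as in the boundedness estimate above. Finally, once the exponential case is established, I would integrate against the representing measures $\mu_{f_j}$ (invoking Proposition \ref{prop:so*} / a Fubini argument for the $\ncl{1}$-valued Bochner-type integrals, as used in Corollary \ref{prop:dos_der}) to obtain the identity for all $f_j\in\mathcal{W}_0$, and then appeal to the density of $\mathcal{W}_0$ in $C_0(\Reals)$ and the continuity of both sides to conclude for $f_j\in C_0^\infty(\Reals)$.

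The main obstacle I anticipate is the rigorous justification of the interchange of the trace with the (strong-operator) limit of the spectral Riemann sums when $H_j$ is merely affiliated with $\Mcal$ (hence possibly unbounded) and $\tau$ is only semi-finite: one must ensure the approximating sums stay uniformly bounded in $\ncl{1}$ and converge in $\ncl{1}$, not merely weakly, so that $\tau$ can be passed through. This is handled by keeping two of the $V_j$'s as the ``Hilbert–Schmidt anchors'' (writing the product as an $\ncl{2}\cdot\Mcal\cdots\Mcal\cdot\ncl{2}$ chain and estimating by Hölder), together with dominated convergence for the spectral integrals defining $e^{\i t_jH_j}$; the remaining steps — density, Fubini for the Bochner integral, and the finite-total-variation bookkeeping for $m$ — are routine given Theorem \ref{prop:m} and Proposition \ref{prop:so*}.
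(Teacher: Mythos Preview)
Your proposal is correct in spirit, but it takes a substantially longer route than the paper's argument. The paper's proof is two lines: the identity is \emph{immediate} when each $f_j$ is a simple function (a finite linear combination of characteristic functions $\chi_{A_j}$), because then $f_j(H_j)$ is a linear combination of the projections $E_j(A_j)$ and the identity reduces, by multilinearity, to the very definition of $m$ on rectangles. One then passes to $f_j\in C_0^\infty(\Reals)$ by uniform approximation with bounded simple functions, using exactly the uniform--norm continuity of both sides that you yourself establish in your first paragraph.

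Your detour through exponentials $e^{\i t_j\la}$ is not wrong, but it is redundant: the ``spectral Riemann sum'' you invoke to handle the exponential case \emph{is} a simple function, so you are effectively proving the simple--function case anyway, only for one particular bounded continuous function at a time. You then add two further layers (integrate against $\mu_{f_j}$ to reach $\mathcal W_0$, then appeal to density of $\mathcal W_0\cap C_0$ in $C_0$), each of which demands its own Fubini/Bochner justification and its own density lemma. None of this is needed once you notice that simple functions already sit inside the domain where the identity is tautological. The moral: when the target measure is \emph{defined} by its values on characteristic functions, the natural dense class is simple functions, not exponentials.
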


\begin{proof}
The result obviously holds for $f_1,f_2,\dots,f_p$ simple functions. Uniform approximation of $f_1,f_2,\dots,f_p\in C_0^\infty(\Reals)$ by (totally bounded) simple functions completes the proof.
\end{proof}

\begin{remark}\label{prop:imgr}
(i) The result analogous to the one of Theorem \ref{prop:m} holds for integrals with respect to the measures $m_1$ and $m_2$.\\
(ii) When the operators $H_0,H_1,\dots,H_p$ are bounded, the functions $f_1,f_2,\dots,f_p$ can be taken in $C^\infty(\Reals)$.
\end{remark}

\begin{cor}\label{prop:i_to_m} Let $H_0=H_0^*$ be affiliated with $\Mcal$ and $V=V^*\in\ncs{2}$. Denote $H_x:=H_0+xV$, $x\in [0,1]$. Assume that either $\tau$ is standard or $p=2$. Then for $f\in\Rfr_b$,
\begin{align*}\tau\left[\frac{d^p}{dx^p}f(H_0+xV)\right]=
p!\int_{\Reals^{p+1}}
\dd{\la_{p+1}}{p}{f}\,dm_{p,H_x,V}^{(1)}(\la_1,\la_2,\dots,\la_{p+1}).\end{align*}
\end{cor}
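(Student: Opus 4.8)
The plan is to strip off the iterated operator integral representation of $\frac{d^p}{dx^p}f(H_x)$ supplied by Lemma \ref{prop:dd_as'}, reduce to the resolvent functions $f_z$ where the integrand factors as a tensor product, and then convert the resulting trace into a Lebesgue integral against $m_{p,H_x,V}^{(1)}$ via the change‑of‑variables identity of Lemma \ref{prop:i_to_m_g}.

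Concretely, I would first apply Lemma \ref{prop:dd_as'} (applicable since $V\in\ncs2\subset\Mcal$ is bounded and $H_x=H_0+xV$ is self-adjoint, affiliated with $\Mcal$) to write, for $f\in\Rfr_b$,
\[
\frac{d^p}{dx^p}f(H_x)=p!\int_\Reals\cdots\int_\Reals\dd{\la_{p+1}}{p}{f}\,E_{H_x}(d\la_1)VE_{H_x}(d\la_2)V\cdots VE_{H_x}(d\la_{p+1}).
\]
For $f=f_z$ with $z\in\Complex\setminus\Reals$, Lemma \ref{f-la:dd1} gives $\dd{\la_{p+1}}{p}{f_z}=\prod_{j=1}^{p+1}(z-\la_j)^{-1}$, a product of single-variable functions, each of which lies in $C_0^\infty(\Reals)$; hence by the functional calculus the iterated operator integral telescopes to the operator product $(zI-H_x)^{-1}\bigl(V(zI-H_x)^{-1}\bigr)^{p}$, an element of $\ncs1$ by grouping its $p$ factors of $V\in\ncs2$. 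Taking the trace and invoking Lemma \ref{prop:i_to_m_g} — in the form valid for the measure $m_1$, cf. Remark \ref{prop:imgr}(i) — with $H_1=\cdots=H_{p+1}=H_x$, $V_1=\cdots=V_p=V$ and $f_1=\cdots=f_{p+1}=f_z$, we get
\[
\tau\Bigl[\frac{d^p}{dx^p}f_z(H_x)\Bigr]=p!\,\tau\bigl[(zI-H_x)^{-1}(V(zI-H_x)^{-1})^{p}\bigr]=p!\int_{\Reals^{p+1}}\prod_{j=1}^{p+1}\frac1{z-\la_j}\,dm_{p,H_x,V}^{(1)},
\]
and by Lemma \ref{f-la:dd1} once more the last integral equals $p!\int\dd{\la_{p+1}}{p}{f_z}\,dm_{p,H_x,V}^{(1)}$. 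This proves the corollary for $f=f_z$.

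To reach a general $f\in\Rfr_b$, note that by partial fractions $\Rfr_b$ is spanned by the constants and by the $z$-derivatives $\partial_z^k f_z$ ($z\in\Complex\setminus\Reals$, $k\geq0$); both sides of the claimed identity are linear in $f$ and vanish on constants (for $p\geq1$ the $p$-th $x$-derivative and the $p$-th divided difference of a constant both vanish, by Proposition \ref{prop:dds} \eqref{f-la:dd2}). So it suffices to differentiate the case-$f_z$ identity $k$ times in $z$: on the left the operator integral is analytic in $z$ and $\partial_z^k$ may be taken under $\frac{d^p}{dx^p}$ and under $\tau$ by the norm-continuity arguments used throughout Section \ref{sec:r}; on the right $\partial_z^k$ passes under the integral against the finite measure $m_{p,H_x,V}^{(1)}$ (Corollary \ref{prop:m.x} bounds its total variation) and through the divided difference by Lemma \ref{prop:dd_der}(iii), while Corollary \ref{prop:dd_rat} identifies $\partial_z^k\dd{\la_{p+1}}{p}{f_z}$ with $\dd{\la_{p+1}}{p}{\partial_z^k f_z}$. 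Alternatively, one may bypass the differentiation and instead expand $\dd{\la_{p+1}}{p}{f}$ — by Corollary \ref{prop:dd_rat} and the Leibniz rule — into a finite sum of tensor products $g_1(\la_1)\cdots g_{p+1}(\la_{p+1})$ with $g_i\colon\la\mapsto(z-\la)^{-m}$ in $C_0^\infty(\Reals)$, and apply Lemma \ref{prop:dd_as'} together with Lemma \ref{prop:i_to_m_g} (and Remark \ref{prop:imgr}(i)) termwise.

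I expect the only points needing care to be (i) the standard fact that the iterated operator integral of a tensor-product integrand is the corresponding operator product, together with the verification that this product lands in $\ncs1$ so that $\tau$ and $m_{p,H_x,V}^{(1)}$ are both legitimately in play, and (ii) in the passage to $\Rfr_b$, the interchange of $\partial_z^k$ with $\frac{d^p}{dx^p}$, with $\tau$, and with the Lebesgue integral; both are handled by the continuity and analyticity estimates already recorded (Lemmas \ref{prop:dd_as'}, \ref{prop:dd_der}, \ref{prop:i_to_m_g} and Corollaries \ref{prop:dd_rat}, \ref{prop:m.x}). I do not expect a genuinely new obstacle: the corollary is in essence a repackaging of Lemma \ref{prop:dd_as'}, Lemma \ref{f-la:dd1} and Lemma \ref{prop:i_to_m_g}.
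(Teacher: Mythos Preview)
Your proposal is correct and follows essentially the same route as the paper: reduce to the resolvent building blocks of $\Rfr_b$, use Lemma~\ref{f-la:dd1}/Corollary~\ref{prop:dd_rat} to write the divided difference as a finite sum of tensor products of $C_0^\infty$ functions, and then apply Lemma~\ref{prop:i_to_m_g} via Remark~\ref{prop:imgr}(i). The paper's proof is just your ``alternatively'' branch carried out directly for $f(\la)=(z-\la)^{-k}$, bypassing the $\partial_z^k$ interchange entirely.
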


\begin{proof} It is enough to prove the result for $f(\la)=(z-\la)^{-k}$, $k\in\Nats$.
By Lemma \ref{prop:dd_as'},
\begin{align}\label{f-la:i_to_m.1}&\frac{d^p}{dx^p}\big((zI-H_x)^{-k}\big)\\\nonumber&\quad=
p!\int_\Reals\int_\Reals\dots\int_\Reals
\dd{\la_{p+1}}{p}{(z-\la)^{-k}}\,E_{H_x}(d\la_1)VE_{H_x}(d\la_2)V\dots VE_{H_x}(d\la_{p+1}).
\end{align}
By Corollary \ref{prop:dd_rat}, the function $\dd{\la_{p+1}}{p}{(z-\la)^{-k}}$ is a linear combination of products $\prod_{j=1}^{p+1}f_j(\la_j)$, with $f_j$ in $C_0^\infty(\Reals)$ for $1\leq j\leq p+1$, and hence, the trace of the expression in \eqref{f-la:i_to_m.1} can be written as a linear combination of integrals like in Remark \ref{prop:imgr} (i), where $m_1=m_{p,H_x,V}^{(1)}(\la_1,\la_2,\dots,\la_{p+1})$.
\end{proof}

\begin{remark}\label{prop:imr} One also has
\begin{align*}\tau\left[((zI-H_0)^{-1}V)^p\right]=
\int_{\Reals^{p}}
\dd{\la_{p}}{p-1}{f_z}\,dm_{p,H_0,V}(\la_1,\la_2,\dots,\la_p)
\end{align*} and
\begin{align*}&\tau\left[(zI-H_0-V)^{-1}(V(zI-H_0)^{-1})^p\right]\\
&\quad=\int_{\Reals^{p+1}}
\dd{\la_{p+1}}{p}{f_z}\,dm_{p,H_0,V}^{(2)}(\la_1,\la_2,\dots,\la_{p+1}).
\end{align*}
\end{remark}

\begin{remark}
\label{prop:imrp}
If $H_0$ is bounded, then Corollary \ref{prop:i_to_m} also holds for $f$ a polynomial and for
$f\in C^\infty(\Reals)$ such that $f\big|_{[a,b]}$ is a polynomial, where $[a,b]\supset\sigma(H_0)\cup\sigma(H_0+V)$.
\end{remark}

\bigskip

{\bf A counterexample.} Let $p\ge2$ be an integer.
Let $V$ be a self--adjoint operator on a Hilbert space $\Hcal$
and assume $V$ belongs to the Schatten $p$-class, with respect to the
usual trace $\Tr$.
Let $E$ be a spectral measure.
A crucial estimate in~\cite{Kop84} is of the total variation of the
function that is defined on product sets by
\[
A_1\times\cdots\times A_p\mapsto\Tr\big(E(A_1)VE(A_2)V\cdots E(A_p)V\big).
\]
Unfortunately, the estimate result in~\cite{Kop84} is false when $p\ge3$.
In this section, we provide an example, based on Hadamard matrices, having unbounded total variation.
We also give a version for finite traces.

Consider the self--adjoint unitary $2\times2$ matrix
\[
V_2=\frac1{\sqrt 2}
\begin{pmatrix}
1&1 \\ 1&-1
\end{pmatrix}\,.
\]
When $n=2^k$, consider the self--adjoint unitary $n\times n$ matrix
\[
V_n=\underset{k\mbox{ times}}{\underbrace{V_2\otimes\cdots\otimes V_2}}.
\]
Then each such $V_n$ is a multiple of a Hadamard matrix.

Let $(e_{jk})_{1\le j,k\le n}$ be the usual system of matrix units for $M_n(\Complex)$.
Let $E_n$ be the spectral measure on the set $\{1,\ldots,n\}$ taking
values in $M_n(\Complex)$, defined by $E_n(\{j\})=e_{jj}$.

The following lemma can be proved directly for all $n$, or first in the case $n=2$ and then by observing
how the total variation behaves under taking tensor products of matrices and spectral measures.
\begin{lemma}\label{lem:Unvar}
For every integer $p\ge2$, and every $n$ that is a power of $2$, the set function
\[
A_1\times\cdots\times A_p\mapsto\Tr_n(E_n(A_1)V_nE_n(A_2)V_n\cdots E_n(A_p)V_n)
\]
has total variation $n^{p/2}$.
\end{lemma}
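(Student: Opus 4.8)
The plan is to establish the formula first for $n=2$ by a direct computation and then to bootstrap to all $n=2^k$ using the multiplicativity of total variation under tensor products. For $n=2$ the set function is supported on the four points $(j_1,j_2)\in\{1,2\}^2$, and its value at a single point $(j_1,\dots,j_p)$ is
\[
\Tr_2\big(e_{j_1j_1}V_2e_{j_2j_2}V_2\cdots e_{j_pj_p}V_2\big)
=(V_2)_{j_1j_2}(V_2)_{j_2j_3}\cdots(V_2)_{j_pj_1},
\]
each factor of which is $\pm 1/\sqrt2$, so the modulus of each atom is $2^{-p/2}$. Since the function is purely atomic on $\{1,2\}^p$, its total variation is the sum of the absolute values of the $2^p$ atoms, namely $2^p\cdot 2^{-p/2}=2^{p/2}$, matching $n^{p/2}$ with $n=2$. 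I would record this as the base case.

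For the inductive step, I would observe that if $E',E''$ are spectral measures with values in $M_m(\Complex)$, $M_{m'}(\Complex)$ and $V',V''$ the corresponding matrices, then the set function attached to $(E'\otimes E'', V'\otimes V'')$ on $\Reals^p$ is, after identifying the spectrum of $E'\otimes E''$ with the product of the two finite spectra, exactly the product set function: its atom at a tuple of pairs equals the product of the atom of the $(E',V')$-function at the first coordinates and the atom of the $(E'',V'')$-function at the second coordinates. This uses $\Tr_{mm'}=\Tr_m\otimes\Tr_{m'}$, the factorization $(e_{jj}\otimes e_{kk})=E'(\{j\})\otimes E''(\{k\})$, and $(V'\otimes V'')(\dots)=V'(\dots)\otimes V''(\dots)$ inside the trace. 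Hence the total variation of the tensor-product set function is the product $\|\cdot\|'\cdot\|\cdot\|''$ of the two total variations, and iterating $k$ times from the $n=2$ case gives total variation $(2^{p/2})^k=n^{p/2}$.

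The one point requiring a little care — and the step I expect to be the main (though still mild) obstacle — is justifying that total variation is exactly multiplicative under this tensor-product/product-of-atoms operation, as opposed to merely submultiplicative. Submultiplicativity is automatic; the reverse inequality needs the observation that for a purely atomic measure on a finite set the total variation is literally the $\ell^1$-norm of the vector of atoms, and the $\ell^1$-norm of an (outer) product of two finite vectors is the product of their $\ell^1$-norms. Since all the measures here are finitely supported, this is elementary, but it is the place where one must be explicit that no cancellation can reduce the variation. Everything else (the recursive definition of $V_n$, the identification $E_n(\{j\})=e_{jj}$, and the entrywise formula for $V_2$) feeds routinely into this scheme, so the proof is short once the base case and the tensor-product lemma are in place.
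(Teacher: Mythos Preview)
Your proposal is correct and follows precisely the second of the two approaches the paper itself suggests (``first in the case $n=2$ and then by observing how the total variation behaves under taking tensor products of matrices and spectral measures''); the paper gives no further details, and your argument fills them in accurately. One cosmetic slip: you write ``supported on the four points $(j_1,j_2)\in\{1,2\}^2$'' where you mean the $2^p$ points $(j_1,\dots,j_p)\in\{1,2\}^p$, but you correct this implicitly two lines later.
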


Consider the von Neumann algebra $\Mcal$ with normal trace $\tau$ given by
\[
(\Mcal,\tau)=\bigoplus_{k=1}^\infty\smdb{M_{n(k)}(\Complex)}{\alpha(k)}\,,
\]
where $\alpha(k)>0$ and this notation indicates that $\Mcal$ is the $\ell^\infty$--direct
sum of the matrix algebras $M_{n(k)}(\Complex)$, where every $n(k)$ is a power of $2$ and where $\tau$ is the trace
determined by
\[
\tau(\underset{k-1\text{ times}}{\underbrace{0\oplus\cdots\oplus0}}
 \oplus I_{n(k)}\oplus0\oplus0\oplus\cdots)=\alpha(k).
\]
We will be interested in the following two cases:
\begin{itemize}
\item[(I)] $\Mcal$ embeds in $\BH$, for $\Hcal$ a separable, infinite-dimensional Hilbert
space, in such a way that $\tau$ is the restriction of the usual trace $\Tr$ on $\BH$;
this is equivalent to $\alpha(k)$ being an integer multiple of $n(k)$ for all $k$.
\item[(II)] $\Mcal$ is a finite von Neumann algebra and $\tau$ is normalized to take value $1$ on
the identity;
this is equivalent to $\sum_{k=1}^\infty\alpha(k)=1$.
\end{itemize}
\begin{example}
Consider
\[
T=t_1V_{n(1)}\oplus t_2V_{n(2)}\oplus\cdots\in\Mcal,
\]
for a bounded sequence of $t_k\ge0$.
Then
\begin{gather}
|T|=t_1I_{n(1)}\oplus t_2I_{n(2)}\oplus\cdots \notag \\
\|T\|_p=\sum_{k=1}^\infty t_k^p\alpha(k). \label{eq:Tp}
\end{gather}
Taking the obvious diagonal spectral measure $E$ defined on the set
$\{(k,j)\in\Nats^2\mid j\le n(k)\}$ by
\[
E(\{(k,j)\})=
\underset{k-1\text{ times}}{\underbrace{0\oplus\cdots\oplus0}}
 \oplus e_{jj}\oplus0\oplus0\oplus\cdots
\]
and using the result of Lemma~\ref{lem:Unvar}, we find that the total variation
of the set function
\[
A_1\times\cdots\times A_p\mapsto\tau(E(A_1)TE(A_2)T\cdots E(A_p)T)
\]
is
\begin{equation}\label{eq:totvar}
\sum_{k=1}^\infty t_k^p\left(\frac{\alpha(k)}{n(k)}\right)n(k)^{p/2}
=\sum_{k=1}^\infty t_k^p\alpha(k)n(k)^{(p/2)-1}.
\end{equation}
Now assuming $n(k)$ is unbounded as $k\to\infty$ and fixing
an integer $p\ge3$, it is easy to choose values of $\alpha(k)$
and $t_k$ such that the $p$-norm in \eqref{eq:Tp} is finite
while the total variation~\eqref{eq:totvar} is infinite, in both cases (I) and (II) above.
\end{example}

\begin{remark}
The above examples also work to show that the set function
\[A_1\times\dots\times A_{p+1}\mapsto\tau[E(A_1)T\dots TE(A_{p+1})]\] has infinite total variation.
\end{remark}

\section{Main results}
\label{sec:mr}

In this section we state the main results which will be proved in the next three sections.

\begin{thm}\label{prop:tr}Let $2<p\in\Nats$.
Let $H_0$ be a self-adjoint operator affiliated with $\Mcal=\BH$ and $V$
a self-adjoint operator in $\ncs{2}$. Then, the following assertions
hold.\\
(i) There is a unique finite real-valued measure $\nu_p$ on $\Reals$ such that the trace formula
\begin{align}\label{f-la:tr}
\tau[R_{p,H_0,V}(f)]=\int_\Reals f^{(p)}(t)\,d\nu_p(t)
\end{align} holds for $f\in\mathcal{W}_p$. The total variation of $\nu_p$ is bounded by
\[\text{\rm var}(\nu_p)\leq c_p:=\frac{1}{p!}\norm{V}_2^p.\]
(ii) If, in addition, $H_0$ is bounded, then $\nu_p$ is absolutely continuous and supported in $[a,b]\supset\sigma(H_0)\cup\sigma(H_0+V)$.
The density $\eta_p$ of $\nu_p$ can be computed recursively by
\begin{align}\label{f-la:star}
\eta_{p}(t)=&\frac{\tau(V^{p-1})}{(p-1)!}-\nu_{p-1}((-\infty,t])\\\nonumber
&\quad-\frac{1}{(p-1)!}\int_{\Reals^{p-1}}\dd{\la_{p-1}}{p-2}{(\la-t)^{p-2}_+}\,
dm_{p-1,H_0,V}(\la_1,\dots,\la_{p-1}),
\end{align} for a.e. $t\in\Reals$. In this case, \eqref{f-la:tr} also holds for $f\in\Rfr$.
\end{thm}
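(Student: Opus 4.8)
The plan is to work first in the resolvent/rational category and then pass to $\mathcal{W}_p$ by an approximation argument. For part (i), I would start from Lemma~\ref{prop:res} \eqref{f-la:g_Rres2}, which gives $R_{p,H_0,V}(f_z)=(zI-H_0-V)^{-1}(V(zI-H_0)^{-1})^p$, and apply the trace together with Remark~\ref{prop:imr} to express $\tau[R_{p,H_0,V}(f_z)]$ as an integral of the divided difference $\dd{\la_{p+1}}{p}{f_z}$ against the measure $m_{p,H_0,V}^{(2)}$ on $\Reals^{p+1}$ (which exists as a countably additive measure of bounded variation in the standard-trace setting by Theorem~\ref{prop:m} and Corollary~\ref{prop:m.x}, using $V\in\ncs{2}$). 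By Proposition~\ref{prop:dds} \eqref{f-la:dd4}, $\dd{\la_{p+1}}{p}{f}=\frac{1}{(p-1)!}\int f^{(p)}(t)\,B_{\la_1,\dots,\la_{p+1}}(t)\,dt$ where $B$ is the basic spline; substituting and using Fubini (justified by the finite total variation of $m_{p,H_0,V}^{(2)}$ and the fact that the basic spline has $L^1$-norm $1/p$) yields a provisional measure $\nu_p$ with $d\nu_p(t)$ obtained by integrating the basic spline against $m_{p,H_0,V}^{(2)}$. This gives \eqref{f-la:tr} for $f\in\Rfr_b$, and the total-variation bound follows from $\text{var}(m_{p,H_0,V}^{(2)})\le\norm{V}_2^p$ combined with the integral $1/p$ of the basic spline and the factor $1/(p-1)!$. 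Then I would extend to $f\in\mathcal{W}_p$: writing $f(\la)=\int e^{\i t\la}\,d\mu_f(t)$ and using Lemma~\ref{prop:edertr} and Theorem~\ref{prop:Rpf} \eqref{f-la:Rpf} to get $\tau[R_{p,H_0,V}(f)]=\frac{1}{(p-1)!}\int_0^1(1-x)^{p-1}\tau[\frac{d^p}{dx^p}f(H_x)]\,dx$ with the requisite $\ncs{1}$-bounds, approximate $e^{\i t\la}$ (for fixed $t$) by resolvents/rational functions uniformly with controlled $\mathcal{W}_p$-data, and pass to the limit; uniqueness of $\nu_p$ is forced because $\{f^{(p)}\st f\in\mathcal{W}_p\}$ is dense in $C_0(\Reals)$.

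For part (ii), under the additional hypothesis that $H_0$ is bounded I would derive the recursion \eqref{f-la:star}. The key input is Lemma~\ref{prop:R_rec}, $R_{p+1,H_0,V}(f_z)=R_{p,H_0,V}(f_z)-((zI-H_0)^{-1}V)^p(zI-H_0)^{-1}$, rewritten in terms of traces: $\tau[R_{p,H_0,V}(f_z)]$ differs from $\tau[R_{p-1,H_0,V}(f_z)]$ by $-\tau[((zI-H_0)^{-1}V)^{p-1}(zI-H_0)^{-1}]$, which by cyclicity equals $-\tau[((zI-H_0)^{-1}V)^{p-1}(zI-H_0)^{-1}]$ and, via Remark~\ref{prop:imr}, is an integral of $\dd{\la_{p}}{p-1}{f_z}$ against $m_{p-1,H_0,V}$ (shifted by one resolvent factor). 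Applying again Proposition~\ref{prop:dds} \eqref{f-la:dd4} and \eqref{f-la:dd5} to turn the order-$(p-1)$ divided difference of $f_z$ into an integral of the degree-$(p-2)$ basic spline $\dd{\la_{p-1}}{p-2}{(\la-t)^{p-2}_+}$ against $m_{p-1,H_0,V}$, and recognizing the ``constant'' contribution $\frac{\tau(V^{p-1})}{(p-1)!}$ from the diagonal/leading-coefficient part (Proposition~\ref{prop:dds} \eqref{f-la:dd2}, \eqref{f-la:dd4}), I obtain that $\eta_p$, the density of $\nu_p$, satisfies \eqref{f-la:star} with the term $-\nu_{p-1}((-\infty,t])$ coming from integrating the lower-order density once (the primitive of the basic spline of degree $p-2$, evaluated via the recursion among basic splines, produces the cumulative distribution of $\nu_{p-1}$). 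Absolute continuity of $\nu_p$ then follows by induction: $\nu_2$ is absolutely continuous (Koplienko), and each step adds to an absolutely continuous measure a term that is itself an integral of a basic spline — hence possesses an $L^1$ density. The support statement follows because for bounded $H_0$ all the spectral measures $E_{H_0}$, $E_{H_0+V}$ are supported in $[a,b]$, so all the basic splines appearing have breakpoints in $[a,b]$ and are supported in $[a,b]$. Finally, that \eqref{f-la:tr} holds for $f\in\Rfr$ (not merely $\Rfr_b$) when $H_0$ is bounded uses Remark~\ref{prop:imrp} and Lemma~\ref{prop:dd_as'} (which holds for $f\in\Rfr$ when $H_0$ is bounded), since then unbounded rational functions still act boundedly on $\sigma(H_0)\cup\sigma(H_0+V)$.

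The main obstacle I anticipate is twofold. First, justifying the interchange of the Cauchy-transform/resolvent representation with the limiting procedure over $\mathcal{W}_p$: one must show that the provisional measure built from $m_{p,H_0,V}^{(2)}$ and the basic spline actually represents $\tau[R_{p,H_0,V}(f)]$ for \emph{all} $f\in\mathcal{W}_p$, which requires a density argument in a topology fine enough that both sides of \eqref{f-la:tr} are continuous — this is where the techniques of \cite{Dostanic} and the $\ncs{1}$-estimates of Lemma~\ref{prop:edertr} must be marshalled carefully. Second, the bookkeeping in deriving \eqref{f-la:star}: relating the degree-$(p-2)$ basic-spline integral against $m_{p-1,H_0,V}$ to both the primitive $\nu_{p-1}((-\infty,t])$ of the lower-order measure and the constant $\tau(V^{p-1})/(p-1)!$ requires tracking exactly which resolvent factors produce which spectral measures (note $m_{p-1,H_0,V}$ uses $E_{H_0}$ throughout, whereas $\nu_{p-1}$ secretly involves $m_{p-1,H_0,V}^{(2)}$ which mixes $E_{H_0+V}$ and $E_{H_0}$), so the identification of the two representations of $\nu_{p-1}$ — the ``spectral-shift'' one and the one arising inside the recursion — is the delicate computational heart of the argument, and is presumably where the Cauchy-transform analysis promised in the introduction (section~\ref{sec:ct}) does the real work.
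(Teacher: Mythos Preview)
Your approach to part (i) differs from the paper's. You propose to build $\nu_p$ explicitly by integrating the basic spline against $m_{p,H_0,V}^{(2)}$; this is exactly the route the paper takes in Section~\ref{sec:xi}, but only for $\dim(\Hcal)<\infty$. For general $\BH$ the paper instead uses the integral formula~\eqref{f-la:Rpf} for the remainder, writes $\tau\big[\frac{d^p}{dx^p}f(H_x)\big]$ as $p!\int\dd{\la_{p+1}}{p}{f}\,dm_{p,H_x,V}^{(1)}$ (Corollary~\ref{prop:i_to_m}), bounds this by $\norm{V}_2^p\sup_{[a,b]}|f^{(p)}|$ via Proposition~\ref{prop:dds}\,\eqref{f-la:dd6}, and then invokes Riesz representation to obtain $\nu_p$ --- no explicit formula at this stage. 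This sidesteps a subtlety in your argument: the basic-spline identity in Proposition~\ref{prop:dds}\,\eqref{f-la:dd4} holds only off the diagonal $D_{p+1}$, and you have not accounted for the diagonal mass of $m_{p,H_0,V}^{(2)}$ (Lemma~\ref{prop:no_atoms} kills atoms but assumes $\tau$ finite and says nothing about continuous mass on the diagonal). Your route is salvageable for (i) --- the diagonal pushes forward to a measure on $\Reals$ that still fits under $\int f^{(p)}\,d\nu_p$ with the correct variation bound --- but the paper's version is cleaner, and for unbounded $H_0$ it proceeds by cut-offs $H_{0,n}=E_{H_0}((-n,n))H_0$ plus Helly selection rather than working directly with $m_{p,H_0,V}^{(2)}$.

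For part (ii) you have the right skeleton: the recursion of Lemma~\ref{prop:R_rec} and the Cauchy-transform analysis of Section~\ref{sec:ct}. The paper executes this in Theorem~\ref{prop:more} via Theorem~\ref{prop:G_rec}. Assuming $\nu_{p-1}$ absolutely continuous (induction, base case Koplienko), one integrates the identity of Lemma~\ref{prop:g_der-s} repeatedly in $z$; the trace term $\tau\big[((zI-H_0)^{-1}V)^{p-1}\big]$ (which is what appears after one antiderivative, and is an integral against $m_{p-1,H_0,V}$) produces $\int\dd{\la_{p-1}}{p-2}{(z-\la)^{p-2}\log(z-\la)}\,dm_{p-1,H_0,V}$, and taking boundary values $-\frac{1}{\pi}\lim_{\eps\to0^+}\im(\cdot)\big|_{z=t+\i\eps}$ yields exactly the spline $\dd{\la_{p-1}}{p-2}{(\la-t)^{p-2}_+}$ together with the constant $\tau(V^{p-1})/(p-1)!$. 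The term $-\nu_{p-1}((-\infty,t])$ comes from integrating $G_{\nu_{p-1}}$ once by parts. So the ``bookkeeping'' you worried about is done entirely on the Cauchy-transform side, and because the recursion uses $m_{p-1,H_0,V}$ (all $E_{H_0}$'s, no $E_{H_0+V}$) the identification you feared between two representations of $\nu_{p-1}$ is not actually needed.
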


\begin{thm}\label{prop:tr'}Let $p\in\{2,3\}$.
Let $H_0=H_0^*$ be an operator affiliated with a von Neumann algebra $\Mcal$ with normal faithful semi-finite trace $\tau$ and $V=V^*$ an operator in $\ncs{2}$. Then, the following assertions hold.\\
(i) There is a unique real-valued measure $\nu_p$ on $\Reals$ such that the trace formula \eqref{f-la:tr} holds for $f\in C_c^\infty(\Reals)$. If $H_0$ is bounded, then $\nu_p$ is finite and \eqref{f-la:tr} also holds for $f\in\mathcal{W}_p\cup\Rfr$.\\
(ii) The measure $\nu_2$ is absolutely continuous. If, in addition, $H_0$ is bounded, then $\nu_p$ is absolutely continuous for $p=3$.\\
(iii)Assume, in addition, that $V\in\ncs{1}$ if $p=2$ or that $H_0$ is bounded if $p=3$. Then the density $\eta_p$ of $\nu_p$ can be computed recursively by \eqref{f-la:star}.
\end{thm}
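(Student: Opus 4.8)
The plan is to prove \eqref{f-la:tr} first for the resolvents $f=f_z$, $z\in\Complex\setminus\Reals$, and for $f\in C_c^\infty(\Reals)$, to read off $\eta_p$ from \eqref{f-la:star}, and then to treat uniqueness and the extension to $\Rfr$ and $\mathcal{W}_p$ separately. The key point is that for $p\in\{2,3\}$ everything can be expressed through operator integrals of order at most two, so that only the $p=2$ instances of Theorem~\ref{prop:m} and Corollary~\ref{prop:i_to_m} -- which hold for \emph{every} semifinite trace -- are used; this is forced on us, since for a general trace $\ncs2\not\subseteq\ncs3$, so the order-$3$ differentiation of Lemma~\ref{prop:edertr} is unavailable. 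For $p=2$: by \eqref{f-la:Rpf} and Lemma~\ref{prop:edertr} (applicable since $V\in\ncs2=\ncs p$), $\tau[R_{2,H_0,V}(f)]=2\int_0^1(1-x)\,\tau[\tfrac{d^2}{dx^2}f(H_x)]\,dx$ for $f\in C_c^\infty$ (for $f=f_z$ one differentiates the resolvent by Lemma~\ref{prop:res'}), and Corollary~\ref{prop:i_to_m} with $p=2$ turns the inner trace into $2\int_{\Reals^3}\De^{(2)}_{\la_1,\la_2,\la_3}(f)\,dm^{(1)}_{2,H_x,V}$. By cyclicity $m^{(1)}_{2,H_x,V}$ is carried by $\{\la_1=\la_3\}$ and is the push-forward of $m_{2,H_x,V}$ under $(\mu_1,\mu_2)\mapsto(\mu_1,\mu_2,\mu_1)$; representing $\De^{(2)}_{\mu_1,\mu_1,\mu_2}(f)$ by the basic spline as in Proposition~\ref{prop:dds}\eqref{f-la:dd4}--\eqref{f-la:dd5} and using Fubini yields $\tau[R_{2,H_0,V}(f)]=\int_\Reals f''(t)\eta_2(t)\,dt$ with
\[
\eta_2(t)=2\int_0^1(1-x)\int_{\Reals^2}\De^{(2)}_{\mu_1,\mu_1,\mu_2}\big((\la-t)_+\big)\,dm_{2,H_x,V}(\mu_1,\mu_2)\,dx.
\]
By Lemma~\ref{prop:m_positive} this is non-negative and $\|\eta_2\|_1=\tfrac12\tau(V^2)$; hence $\nu_2=\eta_2\,dt$ is absolutely continuous for bounded or unbounded $H_0$ (the averaging in $x$ absorbs the atomic part of $m_{2,H_x,V}$ on the diagonal, where $\De^{(2)}_{\mu,\mu,\mu}$ is a point evaluation). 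When $V\in\ncs1$, repeating the computation with $p=1$ (and Krein's $\tau[R_{1,H_0,V}(f)]=\int f'\,\xi_{H_0+V,H_0}$), then integrating by parts, recovers the $p=2$ instance of \eqref{f-la:star}, i.e. \eqref{f-la:koviaxi}.

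For $p=3$ I would use the identity $R_{3,H_0,V}(f)=R_{2,H_0,V}(f)-\tfrac1{2!}\tfrac{d^2}{dx^2}|_{x=0}f(H_0+xV)$ (which for $f=f_z$ is Lemma~\ref{prop:R_rec}). The correction lies in $\ncs1$ by Lemma~\ref{prop:edertr} with $p=2$, and by Corollary~\ref{prop:i_to_m} (again $p=2$) its trace is $\int_{\Reals^3}\De^{(2)}_{\la_1,\la_2,\la_3}(f)\,dm^{(1)}_{2,H_0,V}=\int_{\Reals^2}\De^{(2)}_{\mu_1,\mu_1,\mu_2}(f)\,dm_{2,H_0,V}$, which for $f=f_z$ equals $\tau[((zI-H_0)^{-1}V)^2(zI-H_0)^{-1}]$ by Remark~\ref{prop:imr}. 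Thus $\tau[R_{3,H_0,V}(f)]=\int_\Reals f''(t)\eta_2(t)\,dt-\tfrac12\int_{\Reals^2}\De^{(2)}_{\mu_1,\mu_1,\mu_2}(f)\,dm_{2,H_0,V}(\mu_1,\mu_2)$. I would then \emph{define} $\eta_3$ by \eqref{f-la:star} and verify the trace formula by reversing the bookkeeping: integrating $\int_\Reals f'''\eta_3$ by parts term by term, using $\int_\Reals\eta_2=\tfrac12\tau(V^2)=\tfrac12 m_{2,H_0,V}(\Reals^2)$ to annihilate boundary terms, the symmetry $m_{2,H_0,V}(A_1\times A_2)=m_{2,H_0,V}(A_2\times A_1)$ together with the divided-difference identity $\De^{(1)}_{\mu_1,\mu_2}(f')=\De^{(2)}_{\mu_1,\mu_1,\mu_2}(f)+\De^{(2)}_{\mu_1,\mu_2,\mu_2}(f)$, and the fact that $\int_\Reals f'''(t)\,\De^{(1)}_{\mu_1,\mu_2}((\la-t)_+)\,dt=\De^{(1)}_{\mu_1,\mu_2}(f')$ for all $\mu_1,\mu_2$ (including $\mu_1=\mu_2$, where $\De^{(1)}_{\mu,\mu}((\la-t)_+)=\chi_{(-\infty,\mu]}(t)$). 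One checks directly from \eqref{f-la:star} that $\eta_3$ is bounded -- continuous apart from upward jumps at the eigenvalues of $H_0$ that come from the atomic part of $m_{2,H_0,V}$ on its diagonal -- and vanishes at $\pm\infty$, so $\nu_3=\eta_3\,dt$ is absolutely continuous, and finite with support in any $[a,b]\supset\sigma(H_0)\cup\sigma(H_0+V)$ when $H_0$ is bounded. Absolute continuity can alternatively be obtained from the Cauchy transform $G(z)=\int (z-t)^{-1}\,d\nu_3(t)$, whose third $z$-derivative is $-\tau[R_{3,H_0,V}(f_z)]$: the two displays above identify a third $z$-primitive of $\tau[R_{3,H_0,V}(f_z)]$ with a primitive of the Cauchy transform of $\eta_3\in L^1$.

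For uniqueness, if $\nu,\nu'$ both satisfy \eqref{f-la:tr} for $f\in C_c^\infty$, then $\int g\,d(\nu-\nu')=0$ for every $g=f^{(p)}$, $f\in C_c^\infty$; as such $g$ are precisely the $C_c^\infty$ functions with vanishing moments of orders $0,\dots,p-1$, $\nu-\nu'$ must be a polynomial of degree $<p$ times Lebesgue measure, hence $0$ (using finiteness of $\nu_p$ when $H_0$ is bounded). Having the formula for $f=f_z$, one extends it to $\Rfr$ by differentiating in $z$ (Lemma~\ref{prop:dd_der} and dominated convergence give analyticity of both sides off $\Reals$) and by $R_{p,H_0,V}(f)=0$ for polynomials of degree $<p$ (Lemma~\ref{prop:pol}); for bounded $H_0$ one passes from $C_c^\infty$ to all of $\Rfr$ and $\mathcal{W}_p$ using that an arbitrary such $f$ agrees with a $C_c^\infty$ function near $\sigma(H_0)\cup\sigma(H_0+V)$ and that $f\mapsto\tau[R_{p,H_0,V}(f)]$ (via Lemmas~\ref{prop:fcalc}, \ref{prop:edertr}, \ref{prop:res}) and $f\mapsto\int f^{(p)}\,d\nu_p$ are continuous in the relevant norms.

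I expect the main difficulty to be the $p=3$ correction: reducing $\int_{\Reals^3}\De^{(2)}(f)\,dm^{(1)}_{2,H_0,V}$ to an integral against $m_{2,H_0,V}$ and $\De^{(1)}(f')$, and then carrying out the integration by parts while correctly tracking the diagonal contributions -- the atoms of $H_0$, which survive as jumps of the density $\eta_3$ rather than as atoms of $\nu_3$ -- and the boundary terms; this is exactly where the normalizing constant $\tfrac{\tau(V^2)}{2}$ in \eqref{f-la:star} and the step-function behavior of $\De^{(1)}_{\mu,\mu}((\la-t)_+)$ enter. The secondary subtlety, responsible for the restriction $p\le3$, is that $\ncs2\not\subseteq\ncs3$ for a general trace, which is why the order-$3$ computation must be routed entirely through order-$2$ results and the Koplienko function $\eta_2$.
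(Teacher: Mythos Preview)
Your overall strategy --- reduce $R_{3}$ to $R_{2}$ via the identity $R_{3,H_0,V}(f)=R_{2,H_0,V}(f)-\tfrac1{2!}\tfrac{d^2}{dx^2}\big|_{x=0}f(H_0+xV)$, express the correction through the order-two measure $m_{2,H_0,V}$, and thereby avoid any appeal to order-three operator integrals --- is exactly the paper's. The execution, however, differs in two places.

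For $p=3$ the paper does not define $\eta_3$ by~\eqref{f-la:star} and then verify the trace formula by direct integration by parts; instead it works with the Cauchy transform. Starting from the relation $G_{\nu_3}^{(3)}(z)=-G_{\nu_2}^{(2)}(z)+\tau\big[\big((zI-H_0)^{-1}V\big)^2(zI-H_0)^{-1}\big]$ (Lemma~\ref{prop:g_der-s}), the paper takes three $z$-antiderivatives, expresses the result through $\De^{(1)}_{\la_1,\la_2}\big((z-\la)\log(z-\la)\big)$, and then applies Stieltjes inversion term by term (Theorems~\ref{prop:G_rec} and~\ref{prop:more}); this simultaneously produces the density formula~\eqref{f-la:star} and absolute continuity. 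The extension from $f=f_z$ to $f(\la)=(z-\la)^{-(k+1)}$ is carried out via the specific identity of Lemma~\ref{prop:d=}, which rewrites the $k$-th $z$-derivative of $\tau\big[(zI-H_0)^{-1}V(zI-H_0)^{-1}V(zI-H_0)^{-1}\big]$ as an $x$-second derivative; your direct interchange of $\tfrac{\partial}{\partial z}$ and $\tfrac{\partial}{\partial x}$ on the resolvent achieves the same end more cheaply. Your integration-by-parts route is a legitimate alternative and is more elementary; the Cauchy-transform route has the advantage that absolute continuity and the explicit density drop out of the same computation, rather than having to be checked separately against the diagonal atoms.

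For $p=2$ there is a genuine gap. Your formula for $\eta_2$ contains, for each fixed $x$, a diagonal contribution $\sum_\mu\tfrac12\delta_\mu(t)\,\tau[E_{H_x}(\{\mu\})V^2]$ coming from $\De^{(2)}_{\mu,\mu,\mu}\big((\la-t)_+\big)$, and your claim that ``the averaging in $x$ absorbs the atomic part'' is precisely a spectral-averaging statement of Birman--Solomyak type. In the paper this kind of absolute continuity (Theorem~\ref{prop:hosa}) is proved \emph{as a consequence} of the existence of $\eta_p$, not as an input to it; using it here would be circular. The paper instead obtains $\eta_2$ by observing that Koplienko's original argument --- approximation of $V\in\ncs2$ by $\tau$-trace-class perturbations and passage to the limit in~\eqref{f-la:koviaxi} --- goes through verbatim once one has Theorem~\ref{prop:m} and Corollary~\ref{prop:i_to_m} for $p=2$ in the general semifinite setting. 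You should either follow that approximation argument or supply an independent proof that $\int_0^1(1-x)\,\tau[E_{H_x}(dt)V^2]\,dx$ is absolutely continuous.
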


\begin{remark}\label{rem:see}
When $V\in\ncs{1}$, Koplienko's spectral shift function $\eta_2=\eta_{H_0,H_0+V}$ can be represented by \eqref{f-la:star}, which reduces to the known formula (see \cite{Kop84,convexity})
\begin{align*}
\eta_2(t)&=\tau(V)-\int_{-\infty}^t\xi_{H_0+V,H_0}(\la)\,d\la-\int_t^\infty d\tau[E_{H_0}(\la)V]\\
&=-\int_{-\infty}^t\xi_{H_0+V,H_0}(\la)\,d\la+\tau[E_{H_0}((-\infty,t))V].
\end{align*} For $V$ in the standard Hilbert-Schmidt class, no explicit formula for $\eta_{H_0,H_0+V}$ is known; existence of Koplienko's spectral shift function is proved implicitly by approximation of $V$ by finite-rank operators.
\end{remark}

\begin{remark}
Representation \eqref{f-la:koviaxi} of Koplienko's spectral shift function via Krein's spectral shift function was obtained by integrating by parts in the trace formula in \eqref{f-la:trp=1} \cite{Kop84,convexity}. When $V\in\ncs{1}$ and $f\in\Rfr_b$ (or $f\in\Rfr$ if $H_0$ is bounded), one can see from Lemma \ref{prop:res'} that $\tau\left(\frac{d}{dt}\big|_{t=0}f(H_0+tV)\right)=\tau[Vf'(H_0)]$, and thus,
\[\tau[R_{2,H_0,V}(f)]=\tau[f(H_0+V)-f(H_0)-Vf'(H_0)].\] When $\Mcal$ is finite and $H_0$ is bounded (so that $\eta_2$ is integrable and supported in a segment containing the spectra of $H_0$ and $H_0+V$), integrating by parts in Koplienko's trace formula in \eqref{f-la:trp=2} gives \begin{align}\label{f-la:itf}\nonumber
&\tau\left[f(H_0+V)-f(H_0)-Vf'(H_0)-\frac12V^2f''(H_0)\right]\\&=
\int_\Reals f'''(t)\left(-\int_{-\infty}^t\eta_2(\la)\,d\la+
\frac12\tau\big[V^2E_{H_0}((-\infty,t))\big]\right)\,dt.
\end{align} The bound for the remainder in the approximation formula \eqref{f-la:itf} is $\mathcal{O}\left(\norm{V}_2^2\right)$ since $\norm{\eta_2}_1=\frac{\norm{V}_2^2}{2}$ and $\eta_2\geq 0$ (see \cite{Kop84,convexity} for properties of $\eta_2$).
\end{remark}

\begin{cor}
Let $H_0$ be a self-adjoint operator in $\Mcal$ and $V$
a self-adjoint operator in $\ncs{p}$, where  $2<p\in\Nats$ if $\Mcal=\BH$ or $p=3$ if $\Mcal$ is a general semi-finite von Neumann algebra. Then, there exists a sequence $\{\eta_{p,n}\}_n$ of $L_\infty$-functions such that
\[\tau\big[R_{p,H_0,V}(f)\big]=\lim_{n\rightarrow\infty}\int_\Reals f^{(p)}(t)\eta_{p,n}(t)\,dt,\] for $f\in\mathcal{W}_p$.
\end{cor}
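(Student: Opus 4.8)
The plan is to obtain the approximating sequence $\{\eta_{p,n}\}_n$ by replacing $V$ with suitable finite-rank truncations. First I would fix a sequence $V_n=V_n^*\in\ncs{1}\cap\ncs{p}$ of finite-rank operators with $V_n\to V$ in $\ncs{p}$; in the case $\Mcal=\BH$ this is just truncation to the first $n$ singular vectors, and in the general semi-finite case ($p=3$) one uses that $V\in\ncs{2}$ lies in the closure of $\ncs{1}\cap\ncs{2}$ in the $\|\cdot\|_{2}$-norm. For each $n$, the operator $H_0+V_n$ has the same essential spectrum behaviour needed so that, by Theorem \ref{prop:tr}(ii) (when $\Mcal=\BH$) or Theorem \ref{prop:tr'}(ii)--(iii) (when $\Mcal$ is a general semi-finite von Neumann algebra and $p=3$), the measure $\nu_{p,n}:=\nu_{p,H_0,V_n}$ is absolutely continuous with density $\eta_{p,n}\in L_1(\Reals)$, and in fact $\eta_{p,n}\in L_\infty$: this last point follows because the recursive formula \eqref{f-la:star} expresses $\eta_{p,n}$ in terms of $\nu_{p-1,n}((-\infty,t])$, the bounded quantity $\tau(V_n^{p-1})/(p-1)!$, and an integral of a bounded basic-spline kernel against the finite measure $m_{p-1,H_0,V_n}$ — all of which are bounded functions of $t$. (One may need to first truncate $H_0$ as well, via $E_{H_0}([-N,N])H_0E_{H_0}([-N,N])$, to reduce to the bounded case, and then take a diagonal subsequence; I would absorb that into the choice of the index $n$.)

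The second step is to pass to the limit inside the trace formula. For $f\in\mathcal{W}_p$ given by $f(\la)=\int_\Reals e^{\i t\la}\,d\mu_f(t)$, I would write
\[
\tau\big[R_{p,H_0,V_n}(f)\big]=\int_\Reals f^{(p)}(t)\,\eta_{p,n}(t)\,dt
\]
using absolute continuity of $\nu_{p,n}$, and then show the left-hand side converges to $\tau\big[R_{p,H_0,V}(f)\big]$ as $n\to\infty$. By Theorem \ref{prop:Rpf}\,\eqref{f-la:Rpf} and Lemma \ref{prop:edertr}, $R_{p,H_0,W}(f)$ depends on $W=W^*\in\ncs{p}$ through the Bochner-type multiple operator integral of Lemma \ref{prop:eder}, which is multilinear of degree $p$ in $W$; continuity of $W\mapsto\tau[R_{p,H_0,W}(f)]$ in the $\ncs{p}$-norm then follows from the estimate $\norm{\frac{d^p}{dx^p}f(H_x)}_1\le\norm{W}_p^p\norm{\mu_{f^{(p)}}}_p$ together with a standard telescoping argument in the multilinear integrand (each factor $V_n$ swapped for $V$ costs at most $\norm{V_n-V}_p$ times a product of the remaining norms). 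Hence $\tau[R_{p,H_0,V_n}(f)]\to\tau[R_{p,H_0,V}(f)]$, which gives exactly the asserted limit formula with the sequence $\{\eta_{p,n}\}_n$.

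The main obstacle is keeping the $\eta_{p,n}$ in $L_\infty$ uniformly enough to make the statement meaningful while simultaneously handling the possibly unbounded $H_0$: the clean recursive formula \eqref{f-la:star} that yields $L_\infty$ densities is only available when $H_0$ is bounded (Theorem \ref{prop:tr}(ii) and Theorem \ref{prop:tr'}(iii)), so the truncation of $H_0$ and the truncation of $V$ must be carried out together and then threaded through a single diagonal sequence; I do not claim any uniform $L_\infty$-bound on $\eta_{p,n}$, only membership in $L_\infty$ for each $n$, which is all the statement requires. A secondary technical point is justifying the interchange of limit and integral on the analytic side, but since the convergence $\tau[R_{p,H_0,V_n}(f)]\to\tau[R_{p,H_0,V}(f)]$ is proved directly (not by passing to the limit under the integral sign on the right), no such interchange is actually needed — the right-hand side $\int f^{(p)}\eta_{p,n}$ is simply carried along as an exact equality for each $n$.
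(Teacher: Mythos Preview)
Your approach is essentially the same as the paper's: approximate $V$ in $\ncs{p}$ by operators $V_n$ that are linear combinations of $\tau$-finite projections (hence in $\ncs{2}$), invoke Theorem~\ref{prop:tr}(ii) or Theorem~\ref{prop:tr'}(ii)--(iii) to obtain the densities $\eta_{p,n}$, and use the multiple-operator-integral machinery (Lemma~\ref{prop:edertr}, Proposition~\ref{prop:so*}) to pass to the limit in $\tau[R_{p,H_0,V_n}(f)]$.

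Two minor corrections. First, the hypothesis reads ``$H_0$ a self-adjoint operator \emph{in} $\Mcal$'', so $H_0$ is bounded; your entire discussion of truncating $H_0$ and taking a diagonal subsequence is unnecessary, and what you flag as the ``main obstacle'' simply does not arise. Second, in the general semi-finite case with $p=3$ you write ``$V\in\ncs{2}$'', but the hypothesis only gives $V\in\ncs{3}$; the point is rather that the approximants $V_n$, being linear combinations of $\tau$-finite projections, automatically lie in $\ncs{2}$, which is what Theorem~\ref{prop:tr'} requires.
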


\begin{proof}
Given $V\in\ncs{p}$, there exists a sequence of operators $V_n\in\Mcal$, which are linear combinations of $\tau$-finite projections in $\Mcal$ (or just finite rank operators when $\Mcal=\BH$) such that $\lim_{n\rightarrow\infty}\norm{V-V_n}_p=0$. Then by Lemma \ref{prop:edertr} and Proposition \ref{prop:so*} for $f\in\mathcal{W}_p$,
\begin{align}
\label{f-la:p_1}
\lim_{n\rightarrow\infty}\tau[R_{p,H_0,V_n}(f)]=\tau[R_{p,H_0,V}(f)].
\end{align}
By Theorem \ref{prop:tr} in the case of $\Mcal=\BH$ or Theorem \ref{prop:tr'} in the case of a general $\Mcal$, respectively, applied to the $\tau$-Hilbert-Schmidt perturbations $V_n$, there exists a sequence $\{\eta_{p,n}\}_n$ of $L_\infty$-functions such that
\begin{align}
\label{f-la:p_2}
\tau\big[R_{p,H_0,V_n}(f)\big]=\int_\Reals f^{(p)}(t)\eta_{p,n}(t)\,dt.
\end{align}
Combining \eqref{f-la:p_1} and \eqref{f-la:p_2} completes the proof.
\end{proof}

\begin{thm}\label{prop:tr_free}
Let $\tau$ be finite and let $H_0=H_0^*$ be affiliated with the algebra $\Mcal$ and $V=V^*\in\Mcal$. Assume that $(zI-H_0)^{-1}$ and $V$ are free in the noncommutative space $(\Mcal,\tau)$. Then for $p\geq 3$ the following assertions hold.\\
(i) There is a unique finite real-valued measure $\nu_p$ on $\Reals$ such that the trace formula \eqref{f-la:tr} holds for $f\in\mathcal{W}_p$.\\
(ii) If, in addition, $H_0$ is bounded, then $\nu_p$ is absolutely continuous and supported in $[a,b]\supset\sigma(H_0)\cup\sigma(H_0+V)$. The density $\eta_p$ of $\nu_p$ can be computed recursively by \eqref{f-la:star}. In this case, \eqref{f-la:tr} also holds for $f\in\Rfr$.
\end{thm}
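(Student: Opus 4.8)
\emph{Strategy.} Freeness of $(zI-H_0)^{-1}$ and $V$ enters only through Theorem~\ref{prop:m_free}, which guarantees that the multiple spectral measures $m_{p-1,H_0,V}$ and $m_{p-1,H_0,V}^{(1)}$ extend to finite Borel measures of bounded variation (for $p=3$ this already holds without freeness, by Theorem~\ref{prop:m}); once these measures are in hand, the conclusions follow by the same Cauchy-transform argument that yields Theorem~\ref{prop:tr} in the case $\Mcal=\BH$, since no other step of that argument uses that $\tau$ is the standard trace. So the plan is to transcribe the $\Mcal=\BH$ proof, feeding in Theorem~\ref{prop:m_free} in place of the measure-extension input of \cite{Birman96,Pavlov}, and to check the handful of places where the finite \emph{but signed} measures produced by Theorem~\ref{prop:m_free} are used in Fubini-type arguments.

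\emph{The trace formula for resolvents.} I would first prove \eqref{f-la:tr} for $f=f_z$, $z\in\Complex\setminus\Reals$, by induction on $p\ge2$. For $p=2$ this is Koplienko's theorem, and $\nu_2$, its absolute continuity, and the recursion \eqref{f-la:star} are furnished by Theorem~\ref{prop:tr'}. For the inductive step, Lemma~\ref{prop:R_rec} gives
\[
\tau[R_{p,H_0,V}(f_z)]=\tau[R_{p-1,H_0,V}(f_z)]-\tau\big[\big((zI-H_0)^{-1}V\big)^{p-1}(zI-H_0)^{-1}\big];
\]
expanding the last trace in the spectral resolution of $H_0$ (cf. Lemma~\ref{prop:i_to_m_g} and Remark~\ref{prop:imgr}) and applying Lemma~\ref{f-la:dd1}, it equals $\int_{\Reals^p}\dd{\la_p}{p-1}{f_z}\,dm_{p-1,H_0,V}^{(1)}$, a finite integral by the measures recalled above. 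Now the divided-difference and basic-spline identities of Proposition~\ref{prop:dds} (with Corollary~\ref{prop:dd_rat}) rewrite $\dd{\la_p}{p-1}{f_z}$ as an integral in $t$ of $(z-t)^{-p}$ against a basic-spline kernel; carrying out the Fubini exchange (licit, $m_{p-1,H_0,V}^{(1)}$ having finite variation) and collapsing the integral over $\Reals^p$ to one over $\Reals^{p-1}$ against $m_{p-1,H_0,V}$ — the same bookkeeping as in the $\Mcal=\BH$ case — exhibits the last term as $\int_\Reals f_z^{(p)}(t)\,d\rho_p(t)$ with $\rho_p$ a finite measure whose density is $\frac{\tau(V^{p-1})}{(p-1)!}-\frac{1}{(p-1)!}\int_{\Reals^{p-1}}\dd{\la_{p-1}}{p-2}{(\la-t)^{p-2}_+}\,dm_{p-1,H_0,V}$. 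Since a finite Borel measure on $\Reals$ is determined by its Cauchy transform, combining with the inductive hypothesis for $\tau[R_{p-1,H_0,V}(f_z)]$ produces a finite measure $\nu_p$ with $\tau[R_{p,H_0,V}(f_z)]=\int f_z^{(p)}\,d\nu_p$ and the recursion \eqref{f-la:star}; that $\nu_p$ is real-valued follows from $\tau[R_{p,H_0,V}(f)]\in\Reals$ for real-valued $f$ (equivalently, from the real cumulant decomposition of $m_{p-1,H_0,V}$ in the proof of Theorem~\ref{prop:m_free}).

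\emph{Extension, uniqueness, and absolute continuity.} Passage from $\{f_z\st z\in\Complex\setminus\Reals\}$ to all of $\mathcal{W}_p$ proceeds exactly as in the $\Mcal=\BH$ case: by Theorem~\ref{prop:Rpf}\,\eqref{f-la:Rpf} and the bound $\norm{\frac{d^p}{dx^p}f(H_x)}_1\le\norm{V}_p^p\norm{\mu_{f^{(p)}}}_p$ from Lemma~\ref{prop:edertr}, the functional $f\mapsto\tau[R_{p,H_0,V}(f)]$ is continuous on $\mathcal{W}_p$ in a norm in which $\Rfr_b$ is dense, while $f\mapsto\int f^{(p)}\,d\nu_p$ is continuous in the supremum norm of $f^{(p)}$ because $\nu_p$ is finite; matching on $\Rfr_b$ gives \eqref{f-la:tr}, and the same estimate bounds the total variation of $\nu_p$. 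Uniqueness holds because $\{f^{(p)}\st f\in\mathcal{W}_p\}$ contains all $\la\mapsto e^{\i t\la}$, $t\neq0$, which separate finite Borel measures on $\Reals$. Suppose finally that $H_0$ is bounded, so $\sigma(H_0)\cup\sigma(H_0+V)\subseteq[a,b]$ and $m_{p-1,H_0,V}$ is supported in $[a,b]^{p-1}$. Then $0\le\dd{\la_{p-1}}{p-2}{(\la-t)^{p-2}_+}\le1$, this quantity equalling $1$ for $t<\min_i\la_i$ and $0$ for $t>\max_i\la_i$ (Proposition~\ref{prop:dds}\,\eqref{f-la:dd2}); hence the integral term in \eqref{f-la:star} is a bounded function vanishing off $[a,b]$, $\nu_{p-1}((-\infty,t])$ has bounded variation and, by induction, is absolutely continuous, so $\eta_p\in L^1$ and $\nu_p=\eta_p\,dt$ is absolutely continuous. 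Using $\nu_{p-1}(\Reals)=\tau(V^{p-1})/(p-1)!$ (valid for bounded $H_0$ by Lemma~\ref{prop:pol} and the polynomial case of the trace formula) one checks $\eta_p$ vanishes off $[a,b]$, and \eqref{f-la:tr} extends to $f\in\Rfr$ via Lemma~\ref{prop:pol}, Remark~\ref{prop:imrp}, and partial fractions.

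\emph{Main obstacle.} Conceptually the theorem is Theorem~\ref{prop:m_free} combined with the $\Mcal=\BH$ machinery, so the real work is bookkeeping. The delicate points are: justifying the Fubini exchanges and the collapse of $m_{p-1,H_0,V}^{(1)}$ to $m_{p-1,H_0,V}$ for measures that are only of finite total variation — neither positive (as in Lemma~\ref{prop:m_positive}) nor products — which forces one to carry the signed cumulant decomposition of Theorem~\ref{prop:m_free} through the argument; handling the contribution of the diagonal of $\Reals^p$ to $m_{p-1,H_0,V}^{(1)}$ so that the divided-difference/spline identities of Proposition~\ref{prop:dds} apply almost everywhere (cf. Lemma~\ref{prop:no_atoms}); and checking that the induction meshes with the base cases $p=2,3$ of Theorem~\ref{prop:tr'}, in particular that the normalizing constants in \eqref{f-la:star} are reproduced correctly.
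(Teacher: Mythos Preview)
Your approach is correct in outline and, for part~(ii), essentially coincides with the paper's: both feed Theorem~\ref{prop:m_free} into the Cauchy-transform recursion of Section~\ref{sec:ct} (Theorem~\ref{prop:more}) to produce the density~\eqref{f-la:star}. One point you should make explicit is that Lemma~\ref{prop:i_to_m_g} and Remark~\ref{prop:imr} are stated only under the hypothesis ``$\tau$ standard or $p=2$''; the paper therefore re-derives the identity $\tau\big[((zI-H_0)^{-1}V)^{p-1}\big]=\int_{\Reals^{p-1}}\dd{\la_{p-1}}{p-2}{f_z}\,dm_{p-1,H_0,V}$ directly from the moment--cumulant decomposition~\eqref{f-la:mfree*} rather than citing those results (though their proofs use nothing beyond the existence of the measure, which Theorem~\ref{prop:m_free} provides).

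For part~(i) the routes differ. The paper obtains $\nu_p$ by the direct argument of Theorem~\ref{prop:tr}: bound $\big|\tau\big[\frac{d^p}{dx^p}f(H_x)\big]\big|\le\mathrm{var}(m_{p,H_x,V}^{(1)})\sup|f^{(p)}|$ via the integral representation~\eqref{f-la:derdif}, then integrate over $x$ using~\eqref{f-la:Rpf}. You instead build $\nu_p$ inductively from $\nu_{p-1}$ via Lemma~\ref{prop:R_rec} and the same Cauchy-transform machinery as in~(ii). Your route has the merit of using only the measures $m_{k,H_0,V}$ and $m_{k,H_0,V}^{(1)}$ at $x=0$, which is precisely what Theorem~\ref{prop:m_free} supplies (freeness of $H_x$ and $V$ for $x\neq0$ is not asserted). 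On the other hand, the Cauchy-transform argument (Theorem~\ref{prop:more}) requires $H_0$ bounded, so to get~(i) for unbounded $H_0$ you must first run your induction for the truncations $H_{0,n}=E_{H_0}((-n,n))H_0$ --- which remain free from $V$ --- and then pass to the limit as in Step~3 of the proof of Theorem~\ref{prop:tr}; this extension deserves more than a passing ``exactly as in the $\Mcal=\BH$ case''.
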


\section{Recursive formulas for the Cauchy transform}

\label{sec:ct}

Let $H_0$ and $V$ be self-adjoint operators in $\Mcal$. Assume, in
addition, that $V\in\ncs{2}$. In this section we investigate a measure $\nu_p=\nu_{p,H_0,V}$ as defined in \eqref{f-la:tr} for $f=f_z$ and $f(t)=t^p$. We derive properties of the Cauchy transform of the measure $\nu_p$ which will be used in section \ref{sec:bh} to show that the measure $\nu_{p+1}=\nu_{p+1,H_0,V}$ satisfying \eqref{f-la:tr} for $f\in\Rfr$ is absolutely continuous and its density can be determined explicitly via the density of $\nu_p$ and an integral of a spline function against a certain multiple spectral measure. In addition, for a general trace $\tau$, the results of this section will be used in section \ref{sec:vn} to prove existence of an absolutely continuous measure $\nu_3$ satisfying \eqref{f-la:tr}
for $p=3$ and find an explicit formula for the density of $\nu_3$.

Let $G_{\nu}$ denote the Cauchy transform of a finite measure $\nu$:
\begin{align}\label{f-la:g_def}
G_{\nu}(z)=\int_\Reals\frac{1}{z-t}\,d\nu(t),\quad \im(z)\neq
0.\end{align}

The goal of this section is to prove the theorem below.

\begin{thm}\label{prop:more}
Let $H_0=H_0^*\in\Mcal$ and $V=V^*\in\ncs{2}$. Suppose that $\nu_p$ is a real-valued absolutely continuous measure satisfying \eqref{f-la:tr} for $f=f_z$ and $f(t)=t^p$. Let $G:\Complex_+\to\Complex$ be the analytic function satisfying
\begin{align}\label{f-la:ctd}
&G^{(p+1)}(z)=-G_{\nu_p}^{(p)}(z)
-(-1)^{(p+1)}\tau\big[\big((zI-H_0)^{-1}V\big)^p(zI-H_0)^{-1}\big],\\ \label{f-la:cta}
& \lim_{\im|z|\rightarrow\infty}G(z)=0.
\end{align}Then $G(z)$ is the Cauchy transform of the measure $\nu_{p+1}$ satisfying \eqref{f-la:tr} for $f=f_z$, which is absolutely continuous with the density given by \[\eta_{p+1}(t)=\frac{1}{p!}\left(\tau(V^p)-p!\nu_p((-\infty,t])
-\int_{\Reals^p}\dd{\la_p}{p-1}{(\la-t)^{p-1}_+}\,dm_{p,H_0,V}(\la_1,\dots,\la_p)\right),\]
for a.e. $t\in\Reals$.
\end{thm}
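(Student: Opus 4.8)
The plan is to identify $G(z)$ as the Cauchy transform of a candidate density function $\eta_{p+1}$, then verify that this density satisfies the trace formula \eqref{f-la:tr} for $f=f_z$, and finally upgrade to an identification of $\nu_{p+1}$. First I would introduce the explicit candidate
\[
\eta_{p+1}(t)=\frac{1}{p!}\left(\tau(V^p)-p!\,\nu_p((-\infty,t])
-\int_{\Reals^p}\dd{\la_p}{p-1}{(\la-t)^{p-1}_+}\,dm_{p,H_0,V}(\la_1,\dots,\la_p)\right),
\]
which is a bounded, compactly supported (or at least integrable) function: $\nu_p$ is a finite real-valued measure by hypothesis, $m_{p,H_0,V}$ is a finite measure by Theorem \ref{prop:m} (under the standing assumption that $\tau$ is standard or $p=2$, which is the setting in which $\nu_p$ is being built here), and the basic spline $\dd{\la_p}{p-1}{(\la-t)^{p-1}_+}$ is a nonnegative integrable function of $t$ with integral $1/(p-1)$ by Proposition \ref{prop:dds}\eqref{f-la:dd5}. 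Hence its Cauchy transform $G_{\eta_{p+1}}$ is a well-defined analytic function on $\Complex_+$ vanishing at $\infty$.

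Next I would show $G=G_{\eta_{p+1}}$ by checking that both satisfy the same $(p+1)$st-order ODE \eqref{f-la:ctd} together with the decay condition \eqref{f-la:cta}, which pins down an analytic function up to a polynomial of degree $\le p$, and the decay kills the polynomial. The key computation is to differentiate $G_{\eta_{p+1}}$ term by term $p+1$ times in $z$ and match against the right-hand side of \eqref{f-la:ctd}. For the first term, $\frac{d}{dt}\nu_p((-\infty,t]) = \nu_p$ (as a distribution), so differentiating the Cauchy transform of $-\nu_p((-\infty,t])$ once in $z$ — after an integration by parts using $\partial_t\frac{1}{z-t}=\partial_z\frac1{z-t}$ — produces $-G_{\nu_p}(z)$, and $p$ further $z$-derivatives give $-G_{\nu_p}^{(p)}(z)$; the constant $\tau(V^p)/p!$ term contributes nothing under differentiation. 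For the spline term, I would use Lemma \ref{prop:dd_der}(iii) and Corollary \ref{prop:dd_rat}: differentiating the Cauchy transform against $t$ converts $(\la-t)^{p-1}_+$ into (a multiple of) $(\la-t)^{p-2}_+$, and ultimately into $f_z$ itself, so that after $p+1$ $z$-derivatives the spline integral against $m_{p,H_0,V}$ collapses to
\[
\int_{\Reals^p}\dd{\la_p}{p-1}{f_z}\,dm_{p,H_0,V}(\la_1,\dots,\la_p),
\]
which by Remark \ref{prop:imr} equals $\tau[((zI-H_0)^{-1}V)^p]$; tracking the sign and the factor $\frac1{p!}\cdot\frac1{(p-1)!}$ against the normalization in Proposition \ref{prop:dds}\eqref{f-la:dd4} should reproduce exactly the term $-(-1)^{p+1}\tau[((zI-H_0)^{-1}V)^p(zI-H_0)^{-1}]$ via the resolvent identity $\tau[((zI-H_0)^{-1}V)^p]$ versus $\tau[((zI-H_0)^{-1}V)^p(zI-H_0)^{-1}]$ — more precisely I expect to need $\frac{d}{dz}\tau[((zI-H_0)^{-1}V)^p]$ up to combinatorial factors, which is where Lemma \ref{prop:res'} enters. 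By uniqueness of the solution to \eqref{f-la:ctd}--\eqref{f-la:cta}, $G=G_{\eta_{p+1}}$.

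Finally, having $G=G_{\eta_{p+1}(t)\,dt}$, I would note that the measure $\nu_{p+1}$ satisfying \eqref{f-la:tr} for $f=f_z$ is precisely characterized by its Cauchy transform: from Lemma \ref{f-la:dd1} and the definition of $R_{p+1,H_0,V}(f_z)$, $\tau[R_{p+1,H_0,V}(f_z)]$ is $(-1)^{p+1}(p+1)!$ times a $(p+1)$st $z$-derivative expression, and integrating $f_z^{(p+1)}(t)=\frac{(p+1)!}{(z-t)^{p+2}}$ against $d\nu_{p+1}$ is, up to the same constant, the $(p+1)$st derivative of $G_{\nu_{p+1}}$; so matching Cauchy transforms (which determine finite measures uniquely) gives $d\nu_{p+1}=\eta_{p+1}\,dt$, absolutely continuous with the stated density. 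The main obstacle I anticipate is bookkeeping: getting every sign, factorial, and binomial coefficient right in the term-by-term differentiation of $G_{\eta_{p+1}}$, and in particular correctly relating $\tau[((zI-H_0)^{-1}V)^p]$ to $\tau[((zI-H_0)^{-1}V)^p(zI-H_0)^{-1}]$ and to the $(p+1)$st $z$-derivative of the relevant resolvent trace via Lemma \ref{prop:res'}; the analytic and measure-theoretic parts (uniqueness of the ODE solution, uniqueness of a finite measure given its Cauchy transform, integrability of $\eta_{p+1}$) are routine given the lemmas already established.
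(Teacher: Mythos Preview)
Your overall strategy---fix the candidate density $\eta_{p+1}$, form its Cauchy transform, and verify that it satisfies \eqref{f-la:ctd}--\eqref{f-la:cta}---is sound and is essentially the reverse of what the paper does. The paper (via Lemma~\ref{prop:g_int} and Theorem~\ref{prop:G_rec}) starts from the differential relation \eqref{f-la:ctd}, integrates it $p+1$ times using the key observation
\[
-\tau\big[((zI-H_0)^{-1}V)^p(zI-H_0)^{-1}\big]=\frac{d}{dz}\Big(\tfrac1p\,\tau\big[((zI-H_0)^{-1}V)^p\big]\Big),
\]
rewrites the result via divided differences of $(z-\la)^{p-1}\log(z-\la)$, and then recovers the density by taking boundary values (imaginary-part limits). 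Your direction avoids the logarithm and the Poisson-type inversion, at the price of a chain of integrations by parts; both routes prove the same identity, and yours is arguably more direct once one already has the candidate $\eta_{p+1}$ in hand.

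Two points in your sketch need correction. First, the individual summands in the formula for $\eta_{p+1}$ are \emph{not} integrable on $\Reals$: the constant $\tau(V^p)/p!$ obviously is not, and $\dd{\la_p}{p-1}{(\la-t)^{p-1}_+}$ is \emph{not} the basic spline (the basic spline with these break points is $\dd{\la_p}{p-1}{(\la-t)^{p-2}_+}$); in fact it equals $1$ for $t<\min_j\la_j$. So you cannot take term-by-term Cauchy transforms, and your claim that the constant term ``contributes nothing'' is wrong. What does work is to differentiate $G_{\eta_{p+1}}$ first (legitimate because the full $\eta_{p+1}$ is supported in $[a,b]$) and then integrate by parts over $[a,b]$: the boundary contributions from the three summands at $t=a$ and $t=b$ cancel, using $\nu_p(\Reals)=\tau(V^p)/p!$ (Lemma~\ref{prop:int_m}) and $m_{p,H_0,V}(\Reals^p)=\tau(V^p)$. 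After one integration by parts the truncated-power exponent drops from $p-1$ to $p-2$, at which point Proposition~\ref{prop:dds}\eqref{f-la:dd4} applied to $f(\la)=(z-\la)^{-2}$, together with the displayed derivative identity above and Remark~\ref{prop:imr}, yields exactly the required trace term. Second, you do not address why $\eta_{p+1}$ is real-valued; since $m_{p,H_0,V}$ is complex in general, this is not automatic. The paper handles it via the conjugation symmetry $\overline{m_{p,H_0,V}(d\la_1,\dots,d\la_p)}=m_{p,H_0,V}(d\la_p,\dots,d\la_1)$ combined with the symmetry of the divided difference in its nodes---and this short argument is in fact the sole content of the paper's proof of Theorem~\ref{prop:more} itself, the heavy lifting having been delegated to Theorem~\ref{prop:G_rec}.
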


\begin{lemma}\label{prop:int_m}Let $\nu_p$ be a measure satisfying \eqref{f-la:tr} for $f(t)=t^p$. Then
\[\int_\Reals d\nu_p(t)=\frac{1}{p!}\tau(V^p).\]
\end{lemma}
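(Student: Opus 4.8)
The plan is to evaluate the polynomial $f(t)=t^p$ in the trace formula \eqref{f-la:tr} and compare both sides. Since $f^{(p)}(t)=p!$ is constant, the right-hand side of \eqref{f-la:tr} collapses to $p!\,\nu_p(\Reals) = p!\int_\Reals d\nu_p(t)$. So it suffices to show that the left-hand side, $\tau[R_{p,H_0,V}(t^p)]$, equals $\tau(V^p)$.

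First I would compute $R_{p,H_0,V}(t^p)$ directly. By Lemma \ref{prop:pol}, for a polynomial $f$ of degree $m$ the remainder $R_{p,H_0,V}(f)$ is a linear combination of words $H_0^{k_0}VH_0^{k_1}V\cdots VH_0^{k_p}$ with $k_0+k_1+\cdots+k_p=m-p$ and all $k_j\geq 0$. For $f(t)=t^p$ we have $m=p$, so the only admissible multi-index is $k_0=k_1=\cdots=k_p=0$, and hence $R_{p,H_0,V}(t^p)=c\,V^p$ for a single scalar $c$. To pin down $c$, I would invoke Lemma \ref{prop:res'}: the $p$-th $x$-derivative of $H_x^{k}=(H_0+xV)^k$ at $x=0$, for $k=p$, is $p!\sum H_0^{k_0}VH_0^{k_1}V\cdots VH_0^{k_p}$ summed over $k_0+\cdots+k_p=0$, i.e. $p!\,V^p$. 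Combining this with the integral form \eqref{f-la:Rpf} of the remainder (or just with the definition \eqref{f-la:rem}, noting that the $p$-th derivative of $H_x^p$ is the constant $p!\,V^p$ while all lower derivatives contribute nothing beyond the subtracted Taylor terms), we get $R_{p,H_0,V}(t^p)=\frac{1}{(p-1)!}\int_0^1(1-t)^{p-1}\,p!\,V^p\,dt = p!\,V^p\cdot\frac{1}{(p-1)!}\cdot\frac1p = V^p$.

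Applying $\tau$ then gives $\tau[R_{p,H_0,V}(t^p)]=\tau(V^p)$, and equating with the right-hand side $p!\int_\Reals d\nu_p(t)$ yields the claim. One should note that $V^p\in\ncs{1}$ since $V\in\ncs{2}$ (by Hölder for the $\tau$-$L_p$ spaces, $V\in\ncs{2}\subset\ncs{p'}$ for all $p'\geq 2$ when $\tau$ is finite, or more carefully $\|V^p\|_1=\|V\|_p^p$ and one uses that the hypotheses of the ambient theorems guarantee $V$ lies in the appropriate class), so the trace is well-defined and finite; this is the only point requiring a small amount of care, but it is not a genuine obstacle. The main (and essentially only) subtlety is to be sure that the polynomial $t^p$ is an admissible test function in \eqref{f-la:tr}, i.e. that the measure $\nu_p$ one has in hand does satisfy the trace formula for $f(t)=t^p$ — but this is exactly part of the hypothesis of the lemma as stated, so no further work is needed.
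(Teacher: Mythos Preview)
Your proposal is correct and follows essentially the same approach as the paper: both compute $R_{p,H_0,V}(t^p)=V^p$ using Lemma~\ref{prop:res'} and then apply the trace formula~\eqref{f-la:tr}. The only cosmetic difference is that the paper uses the definition~\eqref{f-la:rem} directly (expanding $(H_0+V)^p$ and subtracting the lower-order Taylor terms to leave $V^p$), whereas you route through the integral remainder~\eqref{f-la:Rpf} with the constant $p$-th derivative $p!\,V^p$; these are equivalent.
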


\begin{proof}Applying the trace formula \eqref{f-la:tr} to the
polynomial $f(t)=t^p$ and applying Lemma \ref{prop:res'} give
\begin{align*}
p!\int_\Reals
d\nu_p(t)=\tau\left[(H_0+V)^p-\sum_{j=0}^{p-1}\sum_{\substack{k_0,k_1,\dots,k_j\geq
0\\ k_0+k_1+\dots+k_j=p-j}}H_0^{p_0}VH_0^{p_1}V\dots
VH_0^{p_j}\right]=\tau(V^p).
\end{align*}
\end{proof}

\begin{lemma}\label{prop:g_der-s}Let $H_0=H_0^*\in\Mcal$ and $V=V^*\in\ncs{p}$. Let $\nu_p$ and $\nu_{p+1}$ be compactly supported measures. Then $\nu_p$ and $\nu_{p+1}$ satisfy \eqref{f-la:tr} for $f=f_z$ if and only if
\[G_{\nu_{p+1}}^{(p+1)}(z)=-G_{\nu_p}^{(p)}(z)
-(-1)^{(p+1)}\tau\big[\big((zI-H_0)^{-1}V\big)^p(zI-H_0)^{-1}\big].\]
\end{lemma}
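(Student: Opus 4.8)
The statement to prove is the equivalence in Lemma~\ref{prop:g_der-s}: for compactly supported measures $\nu_p,\nu_{p+1}$, they satisfy \eqref{f-la:tr} for $f=f_z$ (for all $z\in\Complex\setminus\Reals$) if and only if the displayed $(p{+}1)$st-order differential identity holds. The natural strategy is to express $\tau[R_{p,H_0,V}(f_z)]$ and $\tau[R_{p+1,H_0,V}(f_z)]$ in terms of Cauchy transforms and resolvent traces, and then recognize that differentiating in $z$ turns the defining integral relation into the stated ODE-type identity. First I would recall from Lemma~\ref{prop:res} \eqref{f-la:g_Rres2} that
\[
R_{p,H_0,V}(f_z)=(zI-H_0-V)^{-1}\bigl(V(zI-H_0)^{-1}\bigr)^p,
\]
and from Lemma~\ref{prop:R_rec} the recursion $R_{p+1,H_0,V}(f_z)=R_{p,H_0,V}(f_z)-\bigl((zI-H_0)^{-1}V\bigr)^p(zI-H_0)^{-1}$. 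Applying $\tau$ (justified since $V\in\ncs{2}$, hence the relevant products lie in $\ncs{1}$ by Hölder) gives
\[
\tau[R_{p+1,H_0,V}(f_z)]=\tau[R_{p,H_0,V}(f_z)]-\tau\bigl[\bigl((zI-H_0)^{-1}V\bigr)^p(zI-H_0)^{-1}\bigr].
\]

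Next I would connect the left-hand sides to Cauchy transforms. Since $f_z^{(k)}(t)=\dfrac{k!}{(z-t)^{k+1}}$, the trace formula \eqref{f-la:tr} for $f=f_z$ reads
\[
\tau[R_{p,H_0,V}(f_z)]=\int_\Reals\frac{p!}{(z-t)^{p+1}}\,d\nu_p(t)
=p!\cdot\frac{(-1)^p}{p!}G_{\nu_p}^{(p)}(z)=(-1)^pG_{\nu_p}^{(p)}(z),
\]
using $\dfrac{d^p}{dz^p}\dfrac{1}{z-t}=\dfrac{(-1)^p p!}{(z-t)^{p+1}}$; differentiation under the integral sign is legitimate here because $\nu_p$ is compactly supported and $z$ avoids its support. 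Thus ``$\nu_p$ satisfies \eqref{f-la:tr} for $f=f_z$ for all nonreal $z$'' is \emph{equivalent} to ``$\tau[R_{p,H_0,V}(f_z)]=(-1)^p G_{\nu_p}^{(p)}(z)$ for all nonreal $z$'', and similarly for $\nu_{p+1}$ with exponent $p+1$. Substituting both into the recursion above yields
\[
(-1)^{p+1}G_{\nu_{p+1}}^{(p+1)}(z)=(-1)^pG_{\nu_p}^{(p)}(z)-\tau\bigl[\bigl((zI-H_0)^{-1}V\bigr)^p(zI-H_0)^{-1}\bigr],
\]
and multiplying through by $(-1)^{p+1}$ gives exactly the asserted identity
\[
G_{\nu_{p+1}}^{(p+1)}(z)=-G_{\nu_p}^{(p)}(z)-(-1)^{(p+1)}\tau\bigl[\bigl((zI-H_0)^{-1}V\bigr)^p(zI-H_0)^{-1}\bigr].
\]

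For the converse direction I would argue that the map $z\mapsto\tau[R_{p+1,H_0,V}(f_z)]-(-1)^{p+1}G_{\nu_{p+1}}^{(p+1)}(z)$ is analytic on $\Complex_+$ (and on $\Complex_-$), vanishes as $\im|z|\to\infty$ (both terms decay: $R_{p+1}$ decays like a high power of the resolvent by \eqref{f-la:g_Rres2}, and the $(p{+}1)$st derivative of a compactly-supported Cauchy transform decays), and—using the already-established identity relating $\tau[R_{p+1}]$, $\tau[R_p]$, and the resolvent-trace term together with the hypothesized ODE—has vanishing $(p{+}1)$st-order "difference" once we know the statement for $\nu_p$; a Liouville/uniqueness argument then forces it to be identically zero, giving \eqref{f-la:tr} for $\nu_{p+1}$. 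The main obstacle I anticipate is bookkeeping the constants and signs correctly (the $(-1)^p$ factors and the $p!$ from $f_z^{(p)}$) and carefully justifying differentiation under the integral together with the decay-at-infinity step that underpins the uniqueness in the converse; the analytic/operator-theoretic content is entirely contained in the two already-proven lemmas (Lemma~\ref{prop:res} and Lemma~\ref{prop:R_rec}), so no new estimates should be needed.
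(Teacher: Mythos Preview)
Your forward direction is correct and is exactly the paper's argument: the paper's one-line proof invokes Lemma~\ref{prop:R_rec} together with the identity $(-1)^pG_{\nu_p}^{(p)}(z)=\tau[R_{p,H_0,V}(f_z)]$, which is precisely what you spell out.

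Your treatment of the converse, however, is overcomplicated. No Liouville/uniqueness argument is needed. The point is that the equivalence ``$\nu_p$ satisfies \eqref{f-la:tr} for $f=f_z$'' $\Leftrightarrow$ ``$(-1)^pG_{\nu_p}^{(p)}(z)=\tau[R_{p,H_0,V}(f_z)]$'' (and similarly for $\nu_{p+1}$) together with the recursion from Lemma~\ref{prop:R_rec} is pure algebra that runs in both directions. Concretely: assuming the differential identity and, say, that $\nu_p$ satisfies \eqref{f-la:tr}, multiply the identity by $(-1)^{p+1}$ to get
\[
(-1)^{p+1}G_{\nu_{p+1}}^{(p+1)}(z)=(-1)^pG_{\nu_p}^{(p)}(z)-\tau\big[\big((zI-H_0)^{-1}V\big)^p(zI-H_0)^{-1}\big]
=\tau[R_{p,H_0,V}(f_z)]-\tau[\cdots]=\tau[R_{p+1,H_0,V}(f_z)],
\]
which is exactly \eqref{f-la:tr} for $\nu_{p+1}$. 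There is no ``$(p{+}1)$st-order difference'' to kill and no decay-at-infinity step to justify; the map you write down is identically zero by direct substitution, not merely after applying Liouville. (Note, incidentally, that the literal biconditional as stated requires one of the two trace formulas as an anchor; the differential identity alone cannot pin down both measures, which is consistent with how the lemma is actually used in the paper.)
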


\begin{proof}The result follows immediately from Lemma
\ref{prop:R_rec} upon employing the straightforward equality
\[(-1)^pG_{\nu_p}^{(p)}(z)=\tau\left[R_{p,H_0,V}\left(f_z\right)\right].\]
\end{proof}

Lemma \ref{prop:g_der-s} will be used to construct an absolutely continuous measure $\nu_{p+1}$ satisfying \eqref{f-la:tr} for $f=f_z$ based on the existence of an absolutely continuous measure $\nu_p$ satisfying \eqref{f-la:tr} for $f=f_z$.

\begin{lemma}\label{prop:g_int}
Let $H_0=H_0^*\in\Mcal$ and $V=V^*\in\ncs{p}$. Let $\nu_p$ be a measure satisfying \eqref{f-la:tr} for $f=f_z$ and $f(t)=t^p$. Assume, in addition, that $\nu_p$ is absolutely continuous with the density $\eta_p$ compactly supported in $[a,b]$. Assume that $G: \Complex_+\to\Complex$ is an analytic function satisfying \eqref{f-la:ctd}. Then $G$ is determined by
\begin{align*}
G(z)=&-\log(z-b)\frac{1}{p!}\tau(V^p)-
\int_\Reals\frac{1}{z-\la}\chi_{[a,b]}(\la)\int_a^\la\eta_p(t)\,dt\,d\la\\
&+(-1)^{(p+1)}\frac{1}{p}\int\dots\int\tau\big[\big((zI-H_0)^{-1}V\big)^p\big]\,dz^p,
\end{align*} up to a polynomial of degree $p$.
\end{lemma}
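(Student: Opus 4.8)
The plan is to integrate the defining relation \eqref{f-la:ctd} a total of $p+1$ times; since an $(p+1)$-fold primitive is determined only up to an additive polynomial of degree $p$, this is precisely the indeterminacy allowed in the statement, and the normalization \eqref{f-la:cta} plays no role here. Throughout I would write $R=R(z)=(zI-H_0)^{-1}$ and record that, by the noncommutative Hölder inequality, $(RV)^p\in\ncs 1$, so the traces below are finite and cyclic, and $z\mapsto\tau[(RV)^p]$ is analytic on $\Complex_+$ (differentiation under $\tau$ being justified by $\norm{\cdot}_{1,\infty}$-continuity of the trace).

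First I would dispose of the term $-G_{\nu_p}^{(p)}(z)$ in \eqref{f-la:ctd}. Absolute continuity of $\nu_p$ gives $G_{\nu_p}(z)=\int_\Reals\frac{\eta_p(t)}{z-t}\,dt$, and since $\eta_p\in L^1(\Reals)$ is supported in $[a,b]$, differentiation under the integral sign shows that $z\mapsto\int_\Reals\log(z-t)\eta_p(t)\,dt$, with the branch of $\log$ analytic on $\Complex_+$, is a primitive of $G_{\nu_p}$. Hence integrating $-G_{\nu_p}^{(p)}$ a total of $p+1$ times yields $-\int_\Reals\log(z-t)\eta_p(t)\,dt$, up to a polynomial of degree $p$. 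Integration by parts in $t$, with $u(t)=\log(z-t)$ and $v(t)=\int_a^t\eta_p(s)\,ds$, using $v(a)=0$ and $v(b)=\int_\Reals d\nu_p=\frac1{p!}\tau(V^p)$ (Lemma \ref{prop:int_m}), rewrites this primitive as
\[
-\log(z-b)\frac1{p!}\tau(V^p)-\int_\Reals\frac1{z-\la}\chi_{[a,b]}(\la)\int_a^\la\eta_p(t)\,dt\,d\la,
\]
which is exactly the first line of the asserted formula.

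Next I would treat the term $-(-1)^{p+1}\tau[(RV)^pR]$. Differentiating $\tau[(RV)^p]$ via the product rule, using $R'=-R^2$ and cyclicity of the trace (each of the $p$ resulting summands equals $\tau[(RV)^pR]$), gives the identity $\tau[(RV)^pR]=-\frac1p\frac{d}{dz}\tau[(RV)^p]$. Consequently $-(-1)^{p+1}\tau[((zI-H_0)^{-1}V)^p(zI-H_0)^{-1}]=(-1)^{p+1}\frac1p\frac{d}{dz}\tau[(RV)^p]$, and integrating this $p+1$ times produces $(-1)^{p+1}\frac1p\int\cdots\int\tau[((zI-H_0)^{-1}V)^p]\,dz^p$, again up to a polynomial of degree $p$. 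Adding the two contributions and collecting all constants of integration into a single polynomial of degree $p$ yields the claimed expression for $G$.

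The only genuine care is in the calculus: the choice and analyticity of the branch of $\log$ on $\Complex_+$, the justification of differentiation under the integral sign (both for the scalar integral defining $G_{\nu_p}$ and, via $\norm{\cdot}_{1,\infty}$-continuity, for the operator-valued resolvent expression under $\tau$), and bookkeeping of the phrase ``up to a polynomial of degree $p$'' across the $p+1$ successive integrations. The cyclicity identity for $\frac{d}{dz}\tau[(RV)^p]$ and the single integration by parts are short once those points are settled, and I do not expect any of this to present a serious obstacle.
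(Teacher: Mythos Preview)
Your proposal is correct and follows essentially the same approach as the paper: both use the cyclicity identity $\tau[(RV)^pR]=-\frac1p\frac{d}{dz}\tau[(RV)^p]$ to handle the resolvent term, and both integrate the Cauchy transform term via integration by parts together with Lemma~\ref{prop:int_m}. The only cosmetic difference is the order of operations on the $G_{\nu_p}$ term---you first take a primitive (via $\int\log(z-t)\eta_p(t)\,dt$) and then integrate by parts in $t$, whereas the paper first integrates $G_{\nu_p}(z)$ by parts (writing it as $\frac{1}{z-b}\frac{\tau(V^p)}{p!}$ plus an explicit $z$-derivative) and then takes the primitive; the computations are equivalent.
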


\begin{proof}We note that
\[-\tau\big[\big((zI-H_0)^{-1}V\big)^p(zI-H_0)^{-1}\big]=
\frac{d}{dz}\left(\frac{1}{p}\tau\big[\big((zI-H_0)^{-1}V\big)^p\big]\right).\]
Then by Lemma \ref{prop:g_der-s},
\begin{align}\label{f-la:parts0}
G(z)=-\int G_{\nu_p}(z)\,dz
+(-1)^{(p+1)}\frac{1}{p}\int\dots\int\tau\big[\big((zI-H_0)^{-1}V\big)^p\big]\,dz^p.
\end{align}
By the assumption of the lemma, $d\nu_p(\la)=\eta_p(\la)\,d\la$, and hence,
\begin{align*}
G_{\nu_p}(z)=\int_a^b\frac{1}{z-\la}\eta_p(\la)\,d\la.
\end{align*}Integrating the latter expression by parts gives
\begin{align}\label{f-la:parts1}
G_{\nu_p}(z)=\left(\frac{1}{z-\la}\int_a^\la\eta_p(t)\,dt\right)\bigg|_a^b
-\int_a^b\frac{1}{(z-\la)^2}\int_a^\la\eta_p(t)\,dt\,d\la.
\end{align}By Lemma \ref{prop:int_m}, the first summand in \eqref{f-la:parts1} equals
\begin{align}
\label{f-la:parts2}
\frac{1}{z-b}\int_a^b\eta_p(t)\,dt=\frac{1}{z-b}\frac{1}{p!}\tau(V^p).
\end{align} The second summand in \eqref{f-la:parts1} equals
\begin{align}
\label{f-la:parts3}
\frac{d}{dz}\left(\int_a^b\frac{1}{z-\la}\int_a^\la\eta_p(t)\,dt\,d\la\right).
\end{align}
Combining \eqref{f-la:parts0}-\eqref{f-la:parts3} completes the proof.
\end{proof}

\begin{thm}\label{prop:G_rec}
Let $H_0=H_0^*\in\Mcal$ and $V=V^*\in\ncs{2}$. Let $[a,b]$ be a segment containing $\sigma(H_0)\cup\sigma(H_0+V)$. Assume that either $\tau$ is standard or $p=2$. Let $\nu_p$ be a measure compactly supported in $[a,b]$ and satisfying \eqref{f-la:tr} for $f=f_z$ and $f(t)=t^p$. Assume, in addition, that $\nu_p$ is absolutely continuous with the density $\eta_p$. Then the function
\begin{align*}
G(z)=
\frac{1}{p!}\int_\Reals\frac{1}{z-t}&\bigg(\tau(V^p)-p!\nu_p((-\infty,t])\\\nonumber
&-\int_{\Reals^p}\dd{\la_p}{p-1}{(\la-t)^{p-1}_+}\,dm_{p,H_0,V}(\la_1,\dots,\la_p)\bigg)\,dt
\end{align*} satisfies \eqref{f-la:ctd} and \eqref{f-la:cta}.
\end{thm}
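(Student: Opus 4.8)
The plan is to verify directly that the function $G$ displayed in the statement satisfies the two requirements \eqref{f-la:ctd} and \eqref{f-la:cta}. Write $\Phi(t)=\tau(V^p)-p!\,\nu_p((-\infty,t])$ and let $S(t)=\int_{\Reals^p}\dd{\la_p}{p-1}{(\la-t)^{p-1}_+}\,dm_{p,H_0,V}(\la_1,\dots,\la_p)$ be the spline integral appearing in the statement, so that $G=G_{\nu_{p+1}}$, where $d\nu_{p+1}(t)=\eta_{p+1}(t)\,dt$ with $\eta_{p+1}=\tfrac1{p!}(\Phi-S)$; here $m_{p,H_0,V}$ is a genuine finite Borel measure under the standing hypothesis (``$\tau$ standard or $p=2$''), by Theorem \ref{prop:m}. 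First I would establish that $\nu_{p+1}$ is a finite measure, which yields \eqref{f-la:cta} because the Cauchy transform of a finite measure tends to $0$ as $\im|z|\to\infty$. For $t\ge b$, Lemma \ref{prop:int_m} gives $\nu_p((-\infty,t])=\nu_p(\Reals)=\tfrac1{p!}\tau(V^p)$, so $\Phi(t)=0$; and since $m_{p,H_0,V}$ is supported in $[a,b]^p$ (the $\la_j$ ranging over $\sigma(H_0)$), for such $(\la_1,\dots,\la_p)$ the spline $t\mapsto\dd{\la_p}{p-1}{(\la-t)^{p-1}_+}$ vanishes for $t>\max_j\la_j$ and equals $\dd{\la_p}{p-1}{(\la-t)^{p-1}}=1$ for $t<\min_j\la_j$, by Proposition \ref{prop:dds}\eqref{f-la:dd2} (with $p-1$ in place of $p$, since $\la\mapsto(\la-t)^{p-1}$ is then monic of degree $p-1$). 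Hence $S(t)=0$ for $t\ge b$, while $S(t)=m_{p,H_0,V}(\Reals^p)=\tau(V^p)=\Phi(t)$ for $t<a$; so $\eta_{p+1}$ is supported in $[a,b]$. It is bounded there because $\Phi$ is bounded and the spline $t\mapsto\dd{\la_p}{p-1}{(\la-t)^{p-1}_+}$ takes values in $[0,1]$ — it decreases monotonically in $t$, its $t$-derivative being $-(p-1)$ times a non-negative basic spline, by Proposition \ref{prop:dds}\eqref{f-la:dd5} — whence $S$ is bounded.

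For \eqref{f-la:ctd}, differentiating $G$ under the integral sign $p+1$ times gives
\[
G^{(p+1)}(z)=(-1)^{p+1}(p+1)\int_\Reals\frac{\Phi(t)-S(t)}{(z-t)^{p+2}}\,dt,
\]
and I would handle the two summands separately. For the $\Phi$-summand, integration by parts in $t$ (legitimate since $\Phi$ is absolutely continuous, $\nu_p$ being absolutely continuous by hypothesis) through $(z-t)^{-(p+2)}=\tfrac1{p+1}\tfrac{d}{dt}(z-t)^{-(p+1)}$, with vanishing boundary terms (as $\Phi$ is bounded and $|z-t|^{-(p+1)}\to0$), together with $\Phi'(t)=-p!\,\eta_p(t)$ and $\int_\Reals\eta_p(t)(z-t)^{-(p+1)}\,dt=\tfrac{(-1)^p}{p!}G_{\nu_p}^{(p)}(z)$, produces exactly $-G_{\nu_p}^{(p)}(z)$, the first term of \eqref{f-la:ctd}. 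For the $S$-summand, I would first apply Fubini (justified since the integrand is bounded and $m_{p,H_0,V}$ is finite) to write $\int_\Reals(z-t)^{-(p+2)}S(t)\,dt=\int_{\Reals^p}\bigl(\int_\Reals(z-t)^{-(p+2)}\dd{\la_p}{p-1}{(\la-t)^{p-1}_+}\,dt\bigr)\,dm_{p,H_0,V}$, and evaluate the inner one-variable integral for each $(\la_1,\dots,\la_p)$ by one integration by parts in $t$ (which replaces $\dd{\la_p}{p-1}{(\la-t)^{p-1}_+}$ by the basic spline $\dd{\la_p}{p-1}{(\la-t)^{p-2}_+}$, the boundary term vanishing) followed by Proposition \ref{prop:dds}\eqref{f-la:dd4} (with $p-1$ in place of $p$, applied to $\la\mapsto\tfrac1{p(p-1)(z-\la)^2}$) and Corollary \ref{prop:dd_rat}, to the effect that
\[
\int_\Reals\frac{\dd{\la_p}{p-1}{(\la-t)^{p-1}_+}}{(z-t)^{p+2}}\,dt=-\frac1{p(p+1)}\,\frac{\partial}{\partial z}\prod_{j=1}^{p}\frac1{z-\la_j}.
\]
Integrating this against $m_{p,H_0,V}$ and recognizing, via Remark \ref{prop:imr} and Lemma \ref{f-la:dd1}, that $\int_{\Reals^p}\prod_j(z-\la_j)^{-1}\,dm_{p,H_0,V}=\tau[((zI-H_0)^{-1}V)^p]$, one finds that the $S$-summand contributes $\tfrac{(-1)^{p+1}}{p}\,\tfrac{d}{dz}\tau[((zI-H_0)^{-1}V)^p]$. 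Finally the identity $\tau[((zI-H_0)^{-1}V)^p(zI-H_0)^{-1}]=-\tfrac1p\tfrac{d}{dz}\tau[((zI-H_0)^{-1}V)^p]$ — already used in the proof of Lemma \ref{prop:g_int} — turns this into the second term $-(-1)^{p+1}\tau[((zI-H_0)^{-1}V)^p(zI-H_0)^{-1}]$ of \eqref{f-la:ctd}; summing the two contributions gives \eqref{f-la:ctd}.

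The step I expect to be the main obstacle is the divided-difference/spline bookkeeping in the $S$-summand: the inner integration by parts must be interpreted with care when the breakpoints $\la_j$ coalesce (where the basic spline $\dd{\la_p}{p-1}{(\la-t)^{p-2}_+}$ degenerates and $m_{p,H_0,V}$ may carry mass, cf.\ Lemma \ref{prop:no_atoms}), so the displayed identity is to be checked by direct computation at coalescing nodes and by integration by parts off them, and the numerical constants $p-1$, $p$, $p+1$, $(p-2)!$, $p!$ have to be tracked consistently through the integrations by parts and the invocations of Proposition \ref{prop:dds}\eqref{f-la:dd4} and Corollary \ref{prop:dd_rat}. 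The remaining points are routine: differentiating $G$ under the integral sign (valid since $\eta_{p+1}\in L^1$ has compact support and $z\notin\Reals$), the vanishing of the boundary terms, the absolute continuity of $\Phi$, and the interchange of $\tfrac{\partial}{\partial z}$ with $\int dm_{p,H_0,V}$ (valid since $m_{p,H_0,V}$ is finite and the integrand is holomorphic on $\Complex_+$ with $z$-derivative bounded, locally in $z$, uniformly in $\la$ over the compact support) — and it is precisely for the availability of $m_{p,H_0,V}$ as a finite measure that the hypothesis ``$\tau$ standard or $p=2$'' is needed, through Theorem \ref{prop:m}.
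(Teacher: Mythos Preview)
Your argument is correct and complete; the bookkeeping in the $S$-summand checks out (including on the diagonal, where your direct computation gives $\int_{-\infty}^{\la_0}(z-t)^{-(p+2)}\,dt=\frac{1}{(p+1)(z-\la_0)^{p+1}}$, matching $-\frac{1}{p(p+1)}\partial_z(z-\la_0)^{-p}$), and for partially coalescing nodes the identity follows by continuity of both sides in $(\la_1,\dots,\la_p)$.

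Your route, however, is genuinely different from the paper's. The paper does not verify \eqref{f-la:ctd} by differentiating the proposed $G$; instead it works \emph{forwards} from Lemma~\ref{prop:g_int}, writing a solution of \eqref{f-la:ctd} as a $p$-fold antiderivative involving $\log(z-\la)$, pushing the antiderivative inside $dm_{p,H_0,V}$ via Lemma~\ref{prop:dd_der}(i), and then identifying the resulting expression as a Cauchy transform by Nevanlinna-type reasoning (showing $J_{\la_1,\dots,\la_p}$ maps $\Complex_+$ to a half-plane with the right asymptotics) and by computing the boundary value $\phi(t)=-\tfrac1\pi\lim_{\varepsilon\to0^+}\im(\cdots)$ explicitly. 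Your approach avoids logarithms and boundary-value theory entirely, replacing them by a single integration by parts in each summand and the spline identity from Proposition~\ref{prop:dds}\eqref{f-la:dd4}; it is shorter and more elementary. The paper's approach, on the other hand, is constructive --- it \emph{derives} the density $\eta_{p+1}$ rather than merely checking that a given candidate works --- and its logarithm/boundary-value machinery is reused later (e.g.\ in Section~\ref{sec:xi}).
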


\begin{proof}Since $d\nu_p(t)=\eta_p(t)\,dt$, we have
\begin{align}
\label{f-la:k-1}
\chi_{[a,b]}(\la)\int_a^\la\eta_p(t)\,dt=\nu_p((-\infty,\la])\chi_{[a,b]}(\la).
\end{align}
By Remark \ref{prop:imr}, we obtain the representation
\begin{align}\label{f-la:k0}
\tau\left[\left((zI-H_0)^{-1}V\right)^p\right]=
\int_{\Reals^p}\dd{\la_p}{p-1}{\frac{1}{z-\la}}\,dm_{p,H_0,V}(\la_1,\dots,\la_p).
\end{align}Since $\sigma(H_0)\cup\sigma(H_0+V)\subset[a,b]$, the measure $m_{p,H_0,V}$ is supported in $[a,b]$. By Lemma \ref{prop:dd_der} (i), we can interchange the order of integration in
\begin{align*}
&\int\dots\int\tau\big[\big((zI-H_0)^{-1}V\big)^p\big]\,dz^p\\
&\quad=\int\dots\int\left(\int_{[a,b]^p}\dd{\la_p}{p-1}{\frac{1}{z-\la}}\,
dm_{p,H_0,V}(\la_1,\dots,\la_p)\right)\,dz^p
\end{align*}
and obtain
\begin{align}\label{f-la:k1}
&\int\dots\int\tau\big[\big((zI-H_0)^{-1}V\big)^p\big]\,dz^p\\\nonumber
&=\int_{[a,b]^p}\dd{\la_p}{p-1}{\int\dots\int\frac{1}{z-\la}\,dz^p}\,
dm_{p,H_0,V}(\la_1,\dots,\la_p),
\end{align} with a suitable choice of constants of integration on the left-hand side of \eqref{f-la:k1}. For a reason to become clear later, we choose the antiderivatives in \eqref{f-la:k1} with real constants of integration.
Since
\[\int\dots\int\frac{1}{z-\la}\,dz^p
=(z-\la)^{p-1}\log(z-\la)+\alpha_{p-1}z^{p-1}+pol_{p-2}(z),\] with $pol_{p-2}(z)$ a polynomial of degree $p-2$ and a constant $\alpha_{p-1}\in\Reals$ to be fixed later, we obtain by Proposition \ref{prop:dds} \eqref{f-la:dd2} that the expression in \eqref{f-la:k1} equals
\begin{align}\label{f-la:k1'}
\frac{1}{(p-1)!}
\int_{[a,b]^p}\left(\dd{\la_p}{p-1}{(z-\la)^{p-1}\log(z-\la)}+\alpha_{p-1}\right)\,
dm_{p,H_0,V}(\la_1,\dots,\la_p).
\end{align}
By Lemma \ref{prop:g_int} and \eqref{f-la:k-1} - \eqref{f-la:k1'},
\begin{align}\label{f-la:k2}
G(z)=&-\int_a^b\frac{1}{z-t}\nu_p((-\infty,t])\,dt\\\nonumber
&+\frac{(-1)^{p+1}}{p!}\int_{[a,b]^p}
\bigg(\dd{\la_p}{p-1}{(z-\la)^{p-1}\log(z-\la)}\\
\nonumber&\quad\quad\quad\quad\quad\quad+(-1)^{p}\log(z-b)+\alpha_{p-1}\bigg)\,
dm_{p,H_0,V}(\la_1,\dots,\la_p).
\end{align}

Now we will represent the second integral in \eqref{f-la:k2} as the Cauchy transform of an absolutely continuous measure. If not all $\la_1,\la_2,\dots,\la_p$ coincide, then by Proposition \ref{prop:dds} \eqref{f-la:dd4} and \eqref{f-la:dd5},
\begin{align}\label{f-la:k2a}\nonumber
&\dd{\la_p}{p-1}{(z-\la)^{p-1}\log(z-\la)}\\\nonumber
&\quad=\frac{1}{(p-2)!}\int_\Reals \frac{\partial^{p-1}}{\partial t^{p-1}}\big((z-t)^{p-1}\log(z-t)\big)\dd{\la_p}{p-1}{(\la-t)_+^{p-2}}\,dt\\
\nonumber
&\quad=\frac{1}{(p-2)!}\int_\Reals\big((-1)^{p-1}(p-1)!\log(z-t)+
\gamma_{p-1}\big)\dd{\la_p}{p-1}{(\la-t)_+^{p-2}}\,dt\\
&\quad=(-1)^{p-1}(p-1)\int_\Reals\log(z-t)\dd{\la_p}{p-1}{(\la-t)_+^{p-2}}\,dt+
\frac{1}{(p-1)!}\gamma_{p-1},
\end{align} with $\gamma_{p-1}\in\Reals$. By \eqref{f-la:k2a} and Proposition \ref{prop:dds} \eqref{f-la:dd5}, we obtain
\begin{align}
\label{f-la:k2b}\nonumber
&J_{\la_1,\dots,\la_p}(z)=\dd{\la_p}{p-1}{(z-\la)^{p-1}\log(z-\la)}+(-1)^{p}\log(z-b)+\alpha_{p-1}\\
&\quad=(-1)^{p-1}(p-1)\int_\Reals\big(\log(z-t)-\log(z-b)\big)
\dd{\la_p}{p-1}{(\la-t)_+^{p-2}}\,dt\\\nonumber
&\quad\quad+\frac{1}{(p-1)!}\gamma_{p-1}+\alpha_{p-1}.
\end{align} Since in \eqref{f-la:k2} we need only to consider $\la_1,\dots,\la_p\in (a,b)$ and $\dd{\la_p}{p-1}{(\la-t)_+^{p-2}}$ is supported in
$[\min\{\la_1,\dots,\la_p\},\max\{\la_1,\dots,\la_p\}]$, we obtain that in \eqref{f-la:k2b} it is enough to take $t\in [a,b]$.
By standard computations, for $t<b$,
\begin{align}\label{f-la:lc+}
\text{the function }\;
z\mapsto\log(z-t)-\log(z-b)\;\text{ maps }\Complex_+ \text{ to }\Complex_-
\end{align} and
\begin{align}\label{f-la:lcl}
\lim_{y\rightarrow\infty}\i y\big(\log(\i y-t)-\log(\i y-b)\big)=b-t.
\end{align} Let $\alpha_{p-1}=-\frac{1}{(p-1)!}\gamma_{p-1}$. Then \eqref{f-la:lc+} and \eqref{f-la:lcl} along with Proposition \ref{prop:dds} \eqref{f-la:dd5} imply that $J_{\la_1,\dots,\la_p}$ in \eqref{f-la:k2b} maps $\Complex_+$ to $\Complex_\pm$ (depending on the sign of $(-1)^{p-1}$) and
$\lim_{y\rightarrow\infty}\i y J_{\la_1,\dots,\la_p}(\i y)\in\Reals$.
By the classical theory of analytic functions, $J_{\la_1,\dots,\la_p}$ is the Cauchy transform of a finite real-valued measure.
If $\la_1=\la_2=\dots=\la_p$, then
\begin{align}
\label{f-la:k2c}\nonumber
&J_{\la_1,\dots,\la_1}(z)
=\dd{\la_p}{p-1}{(z-\la)^{p-1}\log(z-\la)}+(-1)^{p}\log(z-b)+\alpha_{p-1}\\
&\quad=(-1)^{p-1}\big(\log(z-\la_1)-\log(z-b)\big)+\alpha_{p-1}.
\end{align} By \eqref{f-la:lc+} and \eqref{f-la:lcl}, the function $J_{\la_1,\dots,\la_1}$ is also the Cauchy transform of a finite real-valued measure. Below we show that the measure generating $J_{\la_1,\dots,\la_p}$ is absolutely continuous.

If all $\la_1,\la_2,\dots,\la_p$ are distinct, then by Proposition \ref{prop:dds} \eqref{f-la:dd7},
\begin{align*}
\dd{\la_p}{p-1}{(z-\la)^{p-1}\log(z-\la)}=
\sum_{k=1}^{p}\frac{(z-\la_k)^{p-1}\log(z-\la_k)}{\prod_{j\neq k}(\la_k-\la_j)}.
\end{align*}Since $\dd{\la_p}{p-1}{(z-\la)^{p-1}\log(z-\la)}$ is symmetric in $\la_1,\la_2,\dots,\la_p$, we may assume without loss of generality that $\la_1<\la_2<\dots<\la_p$. Then
\begin{align}
\label{f-la:k3}\nonumber
\phi(t):&=-\frac{1}{\pi}\lim_{\varepsilon\rightarrow 0^+}\im\big((-1)^{p}\big(\log(t+\i\varepsilon-b)+\alpha_{p-1}\big)\big)\\
&\quad-\frac{1}{\pi}\lim_{\varepsilon\rightarrow 0^+}\im
\big(\dd{\la_p}{p-1}{(t+\i\varepsilon-\la)^{p-1}\log(t+\i\varepsilon-\la)}\big)\\ \nonumber
&=\begin{cases}
(-1)^{p+1}+(-1)^p\sum_{k=1}^{p}\frac{(\la_k-t)^{p-1}}{\prod_{j\neq k}(\la_k-\la_j)} & \text{ if }t<\la_1\\[1.5ex]
(-1)^{p+1}+(-1)^p\sum_{k=m}^{p}\frac{(\la_k-t)^{p-1}}{\prod_{j\neq k}(\la_k-\la_j)} & \text{ if }\la_{m-1}\leq t<\la_m,\text { for } 2\leq m\leq p\\[1.5ex]
(-1)^{p+1} & \text{ if }\la_p\leq t<b\\[1.5ex]
0 &\text{ if }t\geq b.
\end{cases}
\end{align}
By Proposition \ref{prop:dds} \eqref{f-la:dd7} and \eqref{f-la:dd2},
\begin{align}\label{f-la:k4}
(-1)^{p+1}+(-1)^p\sum_{k=1}^{p}\frac{(\la_k-t)^{p-1}}{\prod_{j\neq k}(\la_k-\la_j)}
=(-1)^{p+1}+(-1)^p\dd{\la_p}{p-1}{(\la-t)^{p-1}}=0,
\end{align}and hence, $\phi$ is supported in $[a,b]$.
Combining \eqref{f-la:k3} and \eqref{f-la:k4} gives
\begin{align}\label{f-la:k5}
\phi(t)=(-1)^{p+1}\chi_{(-\infty,b]}(t)+(-1)^p\dd{\la_p}{p-1}{(\la-t)_+^{p-1}}.
\end{align}
Similarly, with the use of Definition \ref{prop:dddef} and Lemma \ref{prop:dd_der} (ii), one can see that \eqref{f-la:k5} holds when some of the values $\la_1,\la_2,\dots,\la_p$ repeat.
Combining \eqref{f-la:k2} and \eqref{f-la:k5} gives
\begin{align}\label{f-la:k6a}
G(z)=&-\int_a^b\frac{1}{z-t}\nu_p((-\infty,t])\,dt\\\nonumber
&+\frac{1}{p!}\int_{[a,b]^p}\int_\Reals\frac{1}{z-t}
\left(\chi_{(-\infty,b]}(t)-\dd{\la_p}{p-1}{(\la-t)_+^{p-1}}\right)\,dt\,
dm_{p,H_0,V}(\la_1,\dots,\la_p).
\end{align}
Changing the order of integration in the second integral in \eqref{f-la:k6a} and applying
Lemma \ref{prop:int_m} along with the fact that $\nu_p$ is supported in $[a,b]$ imply the representation
\begin{align*}
G(z)=
&\int_\Reals\frac{1}{z-t}\bigg(\chi_{(-\infty,b]}(t)
\left(\frac{\tau(V^p)}{p!}-\nu_p((-\infty,t])\right)\\\nonumber
&\quad-\frac{1}{p!}\int_{\Reals^p}\dd{\la_p}{p-1}{(\la-t)_+^{p-1}}\,
dm_{p,H_0,V}(\la_1,\dots,\la_p)\bigg)\,dt\\\nonumber
=&\int_\Reals\frac{1}{z-t}\bigg(
\frac{\tau(V^p)}{p!}-\nu_p((-\infty,t])\\\nonumber
&\quad-\frac{1}{p!}\int_{\Reals^p}\dd{\la_p}{p-1}{(\la-t)_+^{p-1}}\,
dm_{p,H_0,V}(\la_1,\dots,\la_p)\bigg)\,dt.
\end{align*}
\end{proof}

\begin{proof}[Proof of Theorem \ref{prop:more}] In view of Theorem \ref{prop:G_rec}, it is enough to prove that the function
\begin{align}\label{f-la:ddm0}
t\mapsto\frac{1}{p!}\left(\tau(V^p)-p!\nu_p((-\infty,t])
-\int_{\Reals^p}\dd{\la_p}{p-1}{(\la-t)^{p-1}_+}\,dm_{p,H_0,V}(\la_1,\dots,\la_p)\right)
\end{align} is real-valued.
The integral
\begin{align}\label{f-la:ddm}
\int_{\Reals^p}\dd{\la_p}{p-1}{(\la-t)^{p-1}_+}\,dm_{p,H_0,V}(\la_1,\dots,\la_p)
\end{align}can be written as
\begin{align}\label{f-la:re+im}
&\int_{\Reals^p}\dd{\la_p}{p-1}{(\la-t)^{p-1}_+}\,d\re \big(m_{p,H_0,V}(\la_1,\dots,\la_p)\big)\\\nonumber
&\quad+\i\int_{\Reals^p}\dd{\la_p}{p-1}{(\la-t)^{p-1}_+}\,d\im \big(m_{p,H_0,V}(\la_1,\dots,\la_p)\big).
\end{align}
It is easy to see that
\[\overline{m_{p,H_0,V}(d\la_1,d\la_2,\dots,d\la_{p-1},d\la_p)}=
m_{p,H_0,V}(d\la_p,d\la_{p-1},\dots,d\la_2,d\la_1),\] and hence,
\begin{align}\label{f-la:impem}
\im \big(m_{p,H_0,V}(d\la_1,d\la_2,\dots,d\la_{p-1},d\la_p)\big)
=-\im \big(m_{p,H_0,V}(d\la_p,d\la_{p-1},\dots,d\la_2,d\la_1)\big).
\end{align} Along with symmetry of the divided difference $\dd{\la_p}{p-1}{(\la-t)^{p-1}_+}$ in $\la_1,\dots,\la_p$, the equality \eqref{f-la:impem} implies that the second integral in \eqref{f-la:re+im} equals $0$, and thus \eqref{f-la:ddm} is real-valued. We have that $\nu_1$ and $\eta_1$ are real-valued. By induction, we obtain that $\nu_p$ and $\eta_p$ are real-valued for every $p\in\Nats$. Therefore, \eqref{f-la:ddm0} is real-valued.
\end{proof}



\section{Spectral shift functions for $\Mcal=\BH$}

\label{sec:bh}

\begin{proof}[Proof of Theorem \ref{prop:tr} (i)] Let $H_x=H_0+xV$.
The proof of the theorem will proceed in several steps.

{\it Step 1.} Assume first that $H_0$ is bounded and $f\in\Rfr$. Let $[a,b]$ be a segment containing $\sigma(H_0)\cup\sigma(H_0+V)$. By Corollary
\ref{prop:m.k}, the finitely additive measure defined on rectangles by
\[m_{p,H_x,V}^{(1)}(A_1\times A_2\times\cdots\times
A_p\times A_{p+1})=\tau\big[E_{H_x}(A_1)VE_{H_x}(A_2)V\dots
E_{H_x}(A_p)VE_{H_x}(A_{p+1})\big],\] with $A_1,\ldots,A_{p+1}$ Borel subsets of
$\Reals$, extends to a countably additive measure with total
variation not exceeding $\norm{V}_2^p$. It follows from Corollary \ref{prop:i_to_m} and Remark \ref{prop:imrp} that
\begin{align}\label{f-la:derdif}
\tau\left[\frac{d^p}{dx^p}f(H_0+xV)\right]=p!\int_{\Reals^{p+1}}
\dd{\la_{p+1}}{p}{f}\,dm_{p,H_x,V}^{(1)}(\la_1,\la_2,\dots,\la_{p+1}).\end{align}
By Proposition \ref{prop:dds} \eqref{f-la:dd6},
\[\big|\dd{\la_{p+1}}{p}{f}\big|\leq\frac{1}{p!}\max_{\la\in [a,b]}|f^{(p)}(\la)|,\]
which along with \eqref{f-la:derdif} ensures that
\[\left|\tau\left[\frac{d^p}{dx^p}f(H_0+xV)\right]\right|\leq\norm{V}_2^p\max_{\la\in [a,b]}|f^{(p)}(\la)|.\]
Applying the latter estimate to the integrand in \eqref{f-la:Rpf}
guarantees that $R_{p,H_0,V}(f)$ is a bounded functional on the space of $f^{(p)}$
with the norm not exceeding $\frac{1}{p!}\norm{V}_2^p$. Therefore, there
exists a measure $\nu_{p,H_0,V}$ supported in $[a,b]$ and of variation not exceeding $\frac{1}{p!}\norm{V}_2^p$ such that
\begin{align}\label{f-la:tr.bdd}\tau[R_{p,H_0,V}(f)]=\int_a^b
f^{(p)}(t)\,d\nu_{p,H_0,V}(t),\end{align} for all $f\in\Rfr$.

{\it Step 2.} We prove the claim of the theorem for $H_0$ bounded and $f\in\mathcal{W}_p$. Repeating the reasoning
of \cite[Theorem 2.8]{Dostanic}, one extends \eqref{f-la:tr.bdd} from
$\Rfr$ to the set of functions $\Reals\ni\la\mapsto e^{\i t\la}$,
$t\in\Reals$, as follows. By Runge's Theorem, there exists a
sequence of rational functions $r_n$ with poles off $D=\{\la\st
|\la|\leq 1+\norm{H_0}+\norm{V}\}$ such that
\[r_n^{(k)}(\la)\rightarrow (\i t)^k e^{\i t\la},\quad \la\in D,\quad k=0,1,2,\dots,\]
where the convergence is understood in the uniform sense. Making use
of Lemma \ref{prop:fcalc} and passing to the limit on both sides of
\eqref{f-la:tr.bdd} written for $f\in\Rfr$ proves \eqref{f-la:tr.bdd} for $f(\la)=e^{\i
t\la}$, with the same measure $\nu_{p,H_0,V}$ as at the previous step. Finally,
applying Corollary \ref{prop:dos_der} extends \eqref{f-la:tr.bdd} to the
class of  $f\in\mathcal{W}_p$, with the same measure $\nu_{p,H_0,V}$.

{\it Step 3.} Now we extend \eqref{f-la:tr} to the case of an
unbounded operator $H_0$ and $f\in\mathcal{W}_p$. This is done similarly to \cite[Lemma 2.7]{Dostanic}, with replacement of iterated operator integrals by multiple operator integrals. Let $H_{0,n}=E_{H_0}((-n,n))H_0$ and $H_{x,n}=H_{0,n}+xV$.
It follows from \eqref{f-la:Rpf} of Theorem \ref{prop:Rpf} that
\begin{align*}&
R_{p,H_0,V}(f)-R_{p,H_{0,n},V}(f)\\&\quad=\frac{1}{(p-1)!}\int_0^1\left(\frac{d^p}{dx^p}f(H_x)- \frac{d^p}{dx^p}
f(H_{x,n})\right)(1-x)^{p-1}\,dx.\end{align*} There exists a finite Borel measure  $\mu_f$ such that $f(\la)=\int_\Reals e^{it\la}\,d\mu_f(t)$.
On the strength of Lemma \ref{prop:edertr},
\begin{align}\label{f-la:elim0}
&\tau\left[\frac{d^p}{dx^p}f(H_x)-\frac{d^p}{dx^p}f(H_{x,n})\right]\\\nonumber
&=p!\int_{\Pi^{(p)}}\tau\left[e^{\i
(s_0-s_1)H_x}V\dots V e^{\i s_p H_x}-e^{\i
(s_0-s_1)H_{x,n}}V\dots V e^{\i s_p H_{x,n}}\right]\,d\sigma_f^{(p)}(s_0,\dots,s_p).
\end{align}
Proposition \ref{prop:so*} implies that the integrand in \eqref{f-la:elim0} converges to $0$, and hence, the whole expression in \eqref{f-la:elim0} converges to $0$ as $n\rightarrow\infty$. Then applying Proposition \ref{prop:so*} yields
\begin{align}\label{f-la:Helly}\nonumber
&\lim_{n\rightarrow\infty}\tau\left[R_{p,H_0,V}(f)-R_{p,H_{0,n},V}(f)\right]\\
&\quad=\lim_{n\rightarrow\infty}\frac{1}{(p-1)!}\int_0^1\tau\left[\frac{d^p}{dx^p}f(H_x)- \frac{d^p}{dx^p}
f(H_{x,n})\right](1-x)^{p-1}\,dx
=0.
\end{align}By the result of the previous step applied to the bounded operators
$H_{0,n}$, there is a sequence of measures $\nu_{p,H_{0,n},V}$ of variation bounded by $c_p$,
representing the functionals $R_{p,H_{0,n},V}(f)$ for $f\in\mathcal{W}_p$. Denote by $F_n$ the distribution function of $\nu_{p,H_{0,n},V}$. By Helly's selection theorem, there is a subsequence $\{F_{n_k}\}_k$ and a function $F$ of variation not exceeding $c_p$ such that $F_{n_k}$ converges to $F$ pointwise and in $L^1_{loc}(\Reals)$. The trace formula \eqref{f-la:tr} for bounded operators and the convergence in \eqref{f-la:Helly} ensure that the measure with the distribution $F$ satisfies \eqref{f-la:tr} for $f\in\mathcal{W}_p$.
\end{proof}

\begin{proof}[Proof of Theorem \ref{prop:tr} (ii)]
It is an immediate consequence of Theorem \ref{prop:tr} (i) and Theorem \ref{prop:more}.
\end{proof}

\section{Spectral shift functions for an arbitrary semi-finite $\Mcal$}

\label{sec:vn}


\begin{proof}[Proof of Theorem \ref{prop:tr'} for $p=2$]
Due to Theorem \ref{prop:m} and Corollary \ref{prop:i_to_m}, the proof of existence of Koplienko's spectral shift function $\eta_2$ for a Hilbert-Schmidt perturbation $V$ \cite[Lemma 3.3]{Kop84} (cf. also \cite{BoucletKo}) can be extended to the case of a $\tau$-Hilbert-Schmidt perturbation.
\end{proof}

The proof of Theorem \ref{prop:tr'} for $p=3$ will be based on the fact (see the lemma below) that if a measure (possibly complex-valued) satisfies \eqref{f-la:tr} for
$f=f_z$, then it satisfies \eqref{f-la:tr} for any $f\in\Rfr_b$.

\begin{lemma}\label{prop:G_extend} Let $H_0=H_0^*$ be an operator affiliated
with $\Mcal$ and $V=V^*\in\ncs{2}$. Let $\nu_p$, with $p=3$, be a Borel measure
satisfying
\begin{align*}
R_{p,H_0,V}\left(f_z\right)=p!\int_\Reals\frac{1}{(z-t)^{p+1}}\,d\nu_p(t).
\end{align*} Then, for all $f\in\Rfr_b$,
\begin{align}\label{f-la:tr'}
R_{p,H_0,V}(f)=\int_\Reals f^{(p)}(t)\,d\nu_p(t).
\end{align}
If, in addition, $H_0$ is bounded and $\nu_p$ is compactly supported,
then \eqref{f-la:tr'} holds for $f\in\Rfr$.
\end{lemma}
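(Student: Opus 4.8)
The plan is to leverage the fact that the family $\{f_z : z \in \Complex \setminus \Reals\}$ is, in a suitable sense, dense in $\Rfr_b$ together with its derivatives, and that both sides of \eqref{f-la:tr'} are ``continuous'' functionals with respect to this approximation. First I would observe that $\Rfr_b$ is spanned (over $\Complex$) by the functions $f_z$ and their iterated $z$-derivatives: by partial fractions, every bounded rational function with nonreal poles is a linear combination of functions of the form $\la \mapsto (z-\la)^{-k}$ for $z \in \Complex \setminus \Reals$ and $k \geq 1$, and by Corollary \ref{prop:dd_rat} (or directly) these are, up to constants, the $z$-derivatives $\frac{\partial^{k-1}}{\partial z^{k-1}} f_z$. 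Since both the map $f \mapsto R_{p,H_0,V}(f)$ (by Lemma \ref{prop:res}, which expresses $R_{p,H_0,V}(f_z)$ as a resolvent expression depending analytically on $z$) and the map $f \mapsto \int_\Reals f^{(p)}(t)\,d\nu_p(t)$ are linear in $f$, it suffices to prove \eqref{f-la:tr'} for each $f = \frac{\partial^{k-1}}{\partial z^{k-1}} f_z$.

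The second step is to differentiate the hypothesized identity $R_{3,H_0,V}(f_z) = 3!\int_\Reals (z-t)^{-4}\,d\nu_3(t)$ repeatedly in $z$. On the left, Lemma \ref{prop:res'} (or direct computation from \eqref{f-la:g_Rres2}) shows that $\frac{\partial^{k-1}}{\partial z^{k-1}} R_{3,H_0,V}(f_z) = c_k\, R_{3,H_0,V}\!\big(\tfrac{\partial^{k-1}}{\partial z^{k-1}} f_z\big)$ for an explicit constant $c_k$; one must check that differentiation under $R_{3,H_0,V}$ is legitimate, which follows from the norm-analyticity of the resolvent in $z$ on compact subsets of $\Complex \setminus \Reals$. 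On the right, differentiating under the integral sign is justified because $\nu_3$ is a finite Borel measure (or, if $H_0$ is unbounded, one works locally in $z$ away from $\Reals$ where $(z-t)^{-1}$ and its $t$-derivatives are bounded uniformly in $t$ on the relevant compact sets — here the hypothesis that $\nu_3$ is a Borel measure satisfying the $f_z$-formula already forces enough integrability). This yields \eqref{f-la:tr'} for $f = \frac{\partial^{k-1}}{\partial z^{k-1}} f_z$, since $\big(\frac{\partial^{k-1}}{\partial z^{k-1}} f_z\big)^{(3)}(t) = \frac{\partial^{k-1}}{\partial z^{k-1}}\big[ \frac{\partial^3}{\partial t^3}(z-t)^{-1}\big] = \frac{\partial^{k-1}}{\partial z^{k-1}}\big[6(z-t)^{-4}\big]$ matches the constant $c_k$ produced on the left. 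Combining with Step 1 gives \eqref{f-la:tr'} for all $f \in \Rfr_b$.

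For the last sentence of the statement, when $H_0$ is bounded and $\nu_3$ is compactly supported, I would extend from $\Rfr_b$ to all of $\Rfr$ as follows. Given $f \in \Rfr$ (possibly unbounded, i.e.\ a polynomial plus a bounded rational function), write $f = q + g$ with $q$ a polynomial and $g \in \Rfr_b$. For $g$ we already have \eqref{f-la:tr'}. For the polynomial part $q$, use Lemma \ref{prop:pol} to see that $R_{p,H_0,V}(q)$ is a finite sum of words $a_{k_0,\dots,k_p} H_0^{k_0} V H_0^{k_1} V \cdots V H_0^{k_p}$ with $\sum k_i = \deg q - p$; since $H_0$ is bounded and $V \in \ncs{2} \subset \ncs{1}$ is trace-class-summable after pairing (more precisely $V \in \ncs{2}$ and $H_0$ bounded make each such word lie in $\ncs{1}$), the trace $\tau[R_{p,H_0,V}(q)]$ is well-defined, and by Proposition \ref{prop:dds} \eqref{f-la:dd2} and Remark \ref{prop:imrp} it equals $\int_\Reals q^{(p)}(t)\,d\nu_p(t)$ — note $q^{(p)}$ is itself a polynomial and $\nu_p$ is compactly supported, so the integral makes sense. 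Adding the two contributions gives \eqref{f-la:tr'} for $f \in \Rfr$.

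The main obstacle I anticipate is the justification of interchanging $\frac{\partial^{k-1}}{\partial z^{k-1}}$ with both $R_{p,H_0,V}(\cdot)$ and $\int_\Reals(\cdot)\,d\nu_p$, particularly in the unbounded-$H_0$ case where one has only that $\nu_3$ is \emph{a} Borel measure satisfying the $f_z$-identity without an a priori finiteness or support bound: one must argue that the $f_z$-formula itself, holding for all $z \in \Complex \setminus \Reals$, forces $\nu_3$ to integrate $(z-t)^{-4}$ (hence all the $t$-derivatives $(z-t)^{-j}$, $j \geq 1$) and that this integrability is locally uniform in $z$, which is what powers the differentiation-under-the-integral step. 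Everything else is routine partial-fraction bookkeeping and resolvent-identity manipulation of the kind already carried out in Lemmas \ref{prop:res'}--\ref{prop:R_rec}.
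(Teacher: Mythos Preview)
Your approach to the $\Rfr_b$ case is correct and in fact more direct than the paper's. The paper's Step~2 routes through the recursion $R_p\leftrightarrow R_{p-1}$ (Lemma~\ref{prop:R_rec}), invokes Koplienko's trace formula for $p-1=2$, and uses the special identity Lemma~\ref{prop:d=} to recognize the $z$-differentiated trace term $\frac{(-1)^k}{k!}\frac{d^k}{dz^k}\tau\big[((zI-H_0)^{-1}V)^{2}(zI-H_0)^{-1}\big]$ as $\frac12\tau\big[\frac{d^2}{dx^2}\big|_{x=0}(zI-H_x)^{-k-1}\big]$; all of this hinges on $p=3$. Your argument --- that $\partial_z$ commutes with $R_{p,H_0,V}(\cdot)$ on the resolvent family (because $\partial_z[f_z(H_0+tV)]=(\partial_z f_z)(H_0+tV)$ and the $t$- and $z$-derivatives commute by joint analyticity), so differentiating the hypothesis $k-1$ times in $z$ gives the formula for $(z-\cdot)^{-k}$ directly --- works for any $p$ and avoids both Lemma~\ref{prop:d=} and the input from the $p-1$ case. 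Both arguments require differentiating $\int(z-t)^{-(p+1)}\,d\nu_p(t)$ under the integral sign, so the integrability concern you flag is equally present in the paper's proof and is not a disadvantage of your route.

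Your treatment of the polynomial case, however, has a genuine gap. You claim that $\tau[R_{p,H_0,V}(q)]=\int q^{(p)}\,d\nu_p$ follows from Proposition~\ref{prop:dds}~\eqref{f-la:dd2} and Remark~\ref{prop:imrp}, but neither of these says anything about $\nu_p$: they concern divided differences and the multiple spectral measures $m_{p,H_x,V}^{(1)}$, not the measure $\nu_p$ whose only defining property is the $f_z$-identity. (Incidentally, the inclusion $\ncs{2}\subset\ncs{1}$ you write is false; the reason the words $H_0^{k_0}V\cdots VH_0^{k_p}$ lie in $\ncs{1}$ is H\"older with at least two factors of $V\in\ncs{2}$.) The paper's Step~1 pins down the polynomial moments of $\nu_p$ from the $f_z$-hypothesis by expanding both $G_{\nu_p}^{(p)}(z)$ and the resolvent expression \eqref{f-la:g_der1} as Laurent series at $z=\infty$ and matching coefficients. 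An alternative compatible with your approximation style would be Runge's theorem: approximate $q$ and its derivatives uniformly on a compact neighborhood of $\sigma(H_0)\cup\sigma(H_0+V)\cup\mathrm{supp}\,\nu_p$ by functions in $\Rfr_b$ (e.g.\ linear combinations of $(z_0-\la)^{-k}$ with $|z_0|$ large), then pass to the limit using Lemma~\ref{prop:fcalc} on the left and compact support of $\nu_p$ on the right. Either way, some argument linking polynomials to $\nu_p$ is needed; your current citations do not supply one.
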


To prove Lemma \ref{prop:G_extend}, we need a simple lemma below.

\begin{lemma}\label{prop:g_der}Assume that the trace formula \eqref{f-la:tr} holds for $f=f_z$ with a finite measure $\nu_p$. Then,
\begin{align}\label{f-la:g_der1}
G_{\nu_p}^{(p)}(z)&=(-1)^p\tau\left[(zI-H_0-V)^{-1}-
\sum_{j=0}^{p-1}(zI-H_0)^{-1}\left(V(zI-H_0)^{-1}\right)^j\right]\\\label{f-la:g_der2}
&=(-1)^p\tau\big[(zI-H_0-V)^{-1}\left(V(zI-H_0)^{-1}\right)^p\big].
\end{align}
\end{lemma}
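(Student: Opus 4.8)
The plan is to recognise $G_{\nu_p}^{(p)}$ as a scalar multiple of $\tau[R_{p,H_0,V}(f_z)]$ and then to substitute the two operator expressions for $R_{p,H_0,V}(f_z)$ furnished by Lemma~\ref{prop:res}. First I would record the elementary derivative identities for $f_z(\la)=(z-\la)^{-1}$: differentiating $p$ times in the real variable $\la$ gives $f_z^{(p)}(\la)=p!\,(z-\la)^{-(p+1)}$, while differentiating $(z-t)^{-1}$ $p$ times in $z$ gives $\frac{d^p}{dz^p}\big[(z-t)^{-1}\big]=(-1)^p p!\,(z-t)^{-(p+1)}$, so that $\frac{d^p}{dz^p}\big[(z-t)^{-1}\big]=(-1)^p f_z^{(p)}(t)$.

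Since $\nu_p$ is finite and $\im(z)\neq 0$, the function $t\mapsto (z-t)^{-1}$ together with all of its $z$-derivatives is bounded uniformly in $t$ on a neighbourhood of $z$, so differentiation under the integral sign is permissible and $G_{\nu_p}^{(p)}(z)=\int_\Reals \frac{d^p}{dz^p}\big[(z-t)^{-1}\big]\,d\nu_p(t)=(-1)^p\int_\Reals f_z^{(p)}(t)\,d\nu_p(t)$. By hypothesis the trace formula \eqref{f-la:tr} applies to $f=f_z$, whence the last integral equals $\tau[R_{p,H_0,V}(f_z)]$, and therefore $G_{\nu_p}^{(p)}(z)=(-1)^p\,\tau[R_{p,H_0,V}(f_z)]$. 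Now Lemma~\ref{prop:res} gives the operator identities
\[
R_{p,H_0,V}(f_z)=(zI-H_0-V)^{-1}-\sum_{j=0}^{p-1}(zI-H_0)^{-1}\big(V(zI-H_0)^{-1}\big)^j=(zI-H_0-V)^{-1}\big(V(zI-H_0)^{-1}\big)^p ;
\]
applying $\tau$ to these and multiplying through by $(-1)^p$ yields \eqref{f-la:g_der1} and \eqref{f-la:g_der2} at once.

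This is essentially bookkeeping and I do not anticipate a genuine obstacle; the only point needing a word of care is that the resolvents $(zI-H_0-V)^{-1}$ and $(zI-H_0)^{-1}$ are individually bounded but not trace class, so the trace in \eqref{f-la:g_der1} must be understood as the trace of the whole bracketed operator. This is legitimate because that operator coincides with $R_{p,H_0,V}(f_z)$, whose trace is finite by the standing hypothesis (and indeed, for $p\geq 2$, $R_{p,H_0,V}(f_z)\in\ncl{1}$ since each factor $V(zI-H_0)^{-1}$ lies in $\ncs{2}$ by $V\in\ncs{2}$ and at least two such factors occur in the product of Lemma~\ref{prop:res}).
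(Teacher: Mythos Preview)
Your proof is correct and follows essentially the same route as the paper: differentiate the Cauchy transform under the integral sign, invoke the trace formula \eqref{f-la:tr} for $f=f_z$ to identify $\int f_z^{(p)}\,d\nu_p$ with $\tau[R_{p,H_0,V}(f_z)]$, and then apply the two operator expressions of Lemma~\ref{prop:res}. The paper's proof is terser and omits your justification of differentiation under the integral and your remark on trace-class issues, but the logical skeleton is identical.
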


\begin{proof}
Differentiating the integral in \eqref{f-la:g_def} gives
\begin{align}\label{f-la:g'}
G_{\nu_p}^{(p)}(z)=(-1)^p
p!\int_\Reals\frac{1}{(z-t)^{p+1}}\,d\nu_p(t),\quad \im(z)\neq 0.\end{align}
Applying the trace formula \eqref{f-la:tr} to $f=f_z$ ensures
\begin{align}\label{f-la:g''}
\tau\left[(zI-H_0-V)^{-1}-
\sum_{j=0}^{p-1}(zI-H_0)^{-1}\left(V(zI-H_0)^{-1}\right)^j\right]=
p!\int_\Reals\frac{1}{(z-t)^{p+1}}\,d\nu_p(t).
\end{align}
Comparing \eqref{f-la:g''} with \eqref{f-la:g'} completes the proof of
\eqref{f-la:g_der1}; comparing \eqref{f-la:g''} with \eqref{f-la:g_Rres2} of Lemma \ref{prop:res} completes the proof of \eqref{f-la:g_der2}.
\end{proof}

\begin{proof}[Proof of Lemma \ref{prop:G_extend}]
{\it Step 1.} Assume that $H_0$ is bounded. We prove the claim for $f$ a polynomial. For
$z\in\Complex\setminus\Reals$, with $|z|$ large enough,
\[G_{\nu_p}(z)=\sum_{k=0}^\infty z^{-(k+1)}\int_\Reals t^k\,d\nu_p(t),\] and hence,
\begin{align}
\label{f-la:gp1} (-1)^p G_{\nu_p}^{(p)}(z)=\sum_{k=0}^\infty
z^{-(k+p+1)}(k+1)(k+2)\dots (k+p)\int_\Reals t^k\,d\nu_p(t).\end{align} On
the other hand,
\begin{align}\label{f-la:gp2}\nonumber
&(-1)^p G_{\nu_p}^{(p)}(z)\\\nonumber&\quad=
\tau\left[(zI-H_0-V)^{-1}-\sum_{j=0}^{p-1}(zI-H_0)^{-1}\left(V(zI-H_0)^{-1}\right)^j\right]\\
&\quad= \tau\left[\frac{1}{z}\left(I-\frac{H_0+V}{z}\right)^{-1}-
\sum_{j=0}^{p-1}\frac{1}{z^{j+1}}\left(I-\frac{H_0}{z}\right)^{-1}
\left(V\left(I-\frac{H_0}{z}\right)^{-1}\right)^j\right].
\end{align}
Employing the power series expansion in \eqref{f-la:gp2} gives
\begin{align}\label{f-la:gp2'}\nonumber
&(-1)^p G_{\nu_p}^{(p)}(z)\\\nonumber&=
\tau\bigg[\frac{1}{z}\sum_{m=0}^\infty\left(\frac{H_0+V}{z}\right)^{m}
-\sum_{j=0}^{p-1}\sum_{i=0}^\infty\frac{1}{z^{j+1}}
\sum_{\substack{k_0,k_1,\dots,k_j\geq 0\\
k_0+k_1+\dots+k_j=i}}\left(\frac{H_0}{z}\right)^{k_0}
V\left(\frac{H_0}{z}\right)^{k_1}V\dots
V\left(\frac{H_0}{z}\right)^{k_j}\bigg]\\
&=\tau\bigg[\sum_{m=0}^\infty z^{-(m+1)}(H_0+V)^{m}-\sum_{j=0}^{p-1}\sum_{i=0}^\infty z^{-(j+1)}
\sum_{\substack{k_0,k_1,\dots,k_j\geq 0\\k_0+k_1+\dots+k_j=i}}z^{-i}H_0^{k_0}
VH_0^{k_1}V\dots VH_0^{k_j}\bigg].
\end{align}
By expanding $(H_0+V)^m$ one can see that
\begin{align}\label{f-la:gp3}
&\tau\bigg[\sum_{m=0}^{p-1} z^{-(m+1)}(H_0+V)^{m}
-\sum_{j=0}^{p-1}\sum_{i=0}^{p-1-j} z^{-(j+1)}
\sum_{\substack{k_0,k_1,\dots,k_j\geq 0\\ k_0+k_1+\dots+k_j=i}}z^{-i}H_0^{k_0}
VH_0^{k_1}V\dots VH_0^{k_j}\bigg]=0.
\end{align}
Subtracting \eqref{f-la:gp3} from \eqref{f-la:gp2'} yields
\begin{align}\label{f-la:gp4}\nonumber
&(-1)^p G_{\nu_p}^{(p)}(z)\\\nonumber&=\tau\bigg[\sum_{m=p}^\infty
z^{-(m+1)}(H_0+V)^{m}-\sum_{j=0}^{p-1}\sum_{i=p-j}^\infty z^{-(i+j+1)}
\sum_{\substack{k_0,k_1,\dots,k_j\geq 0\\ k_0+k_1+\dots+k_j=i}}H_0^{k_0}
VH_0^{k_1}V\dots
VH_0^{k_j}\bigg]\\&=\tau\bigg[\sum_{m=p}^\infty
z^{-(m+1)}\bigg((H_0+V)^{m}-\sum_{j=0}^{p-1}\sum_{\substack{k_0,k_1,\dots,k_j\geq
0\\ k_0+k_1+\dots+k_j=m-j}}H_0^{k_0} VH_0^{k_1}V\dots VH_0^{k_j}\bigg)\bigg].
\end{align}
By the continuity of the trace $\tau$, \eqref{f-la:gp4} can be
rewritten as
\begin{align}\label{f-la:gp4'}\nonumber
&(-1)^p G_{\nu_p}^{(p)}(z)\\\nonumber&=\sum_{m=p}^\infty
z^{-(m+1)}\tau\bigg[(H_0+V)^{m}-\sum_{j=0}^{p-1}\sum_{\substack{k_0,k_1,\dots,k_j\geq
0\\ k_0+k_1+\dots+k_j=m-j}}H_0^{k_0} VH_0^{k_1}V\dots
VH_0^{k_j}\bigg]\\&=\sum_{k=0}^\infty
z^{-(k+p+1)}\tau\bigg[(H_0+V)^{k+p}-\sum_{j=0}^{p-1}\sum_{\substack{k_0,k_1,\dots,k_j\geq
0\\ k_0+k_1+\dots+k_j=k+p-j}}H_0^{k_0} VH_0^{k_1}V\dots
VH_0^{k_j}\bigg].
\end{align}
By comparing the representations for $(-1)^p G_{\nu_p}^{(p)}(z)$ of
\eqref{f-la:gp1} and \eqref{f-la:gp4'}, we obtain that for any $k\in\{0\}\cup\Nats$,
\begin{align*}&\tau\bigg[(H_0+V)^{k+p}-\sum_{j=0}^{p-1}\sum_{\substack{k_0,k_1,\dots,k_j\geq
0\\ k_0+k_1+\dots+k_j=k+p-j}}H_0^{k_0} VH_0^{k_1}V\dots
VH_0^{k_j}\bigg]\\&\quad=(k+1)(k+2)\dots (k+p)\int_\Reals t^k\,d\nu_p(t),
\end{align*} along with Lemma \ref{prop:res'} proving the trace formula \eqref{f-la:tr} for all polynomials. We note that under the assumptions of Step 1, $p$ can be any natural number.

{\it Step 2.} Assume that $f\in\Rfr_b$, with $H_0$ not necessarily bounded.
It is enough to prove the statement for $f(t)=\frac{1}{(z-t)^{k+1}}$, $k\in\{0\}\cup\Nats$.
Applying Lemma \ref{prop:R_rec} gives
\begin{align}\label{f-la:z-1}
&p!\int_\Reals\frac{1}{(z-t)^{p+1}}\,d\nu_{p}(t)\\\nonumber
&\quad=(p-1)!\int_\Reals\frac{1}{(z-t)^{p}}\,d\nu_{p-1}(t)
-\tau\big[\big((zI-H_0)^{-1}V\big)^{p-1}(zI-H_0)^{-1}\big].
\end{align}
Differentiating \eqref{f-la:z-1} $k$ times with respect to $z$ gives
\begin{align}\label{f-la:z-2}
&(-1)^k(p+k)!\int_\Reals\frac{1}{(z-t)^{p+1+k}}\,d\nu_{p}(t)\\\nonumber
&=(-1)^k(p-1+k)!\int_\Reals\frac{1}{(z-t)^{p+k}}\,d\nu_{p-1}(t)
-\frac{d^k}{dz^k}\tau\big[\big((zI-H_0)^{-1}V\big)^{p-1}(zI-H_0)^{-1}\big].
\end{align}
Dividing by $(-1)^k k!$ on both sides of \eqref{f-la:z-2} implies
\begin{align}\label{f-la:z-3}
&\frac{(p+k)!}{k!}\int_\Reals\frac{1}{(z-t)^{p+1+k}}\,d\nu_{p}(t)\\\nonumber
&=\frac{(p-1+k)!}{k!}\int_\Reals\frac{1}{(z-t)^{p+k}}\,d\nu_{p-1}(t)
-\frac{(-1)^k}{k!}\frac{d^k}{dz^k}\tau\big[\big((zI-H_0)^{-1}V\big)^{p-1}(zI-H_0)^{-1}\big].
\end{align}
Making use of the representation
\[R_{p-1,H_0,V}\left(\frac{1}{(z-t)^{k+1}}\right)=
\frac{(p-1+k)!}{k!}\int_\Reals\frac{1}{(z-t)^{p+k}}\,d\nu_{p-1}(t)\] (see Theorem \ref{prop:tr'} for Koplienko's spectral shift function) and Lemma
\ref{prop:d=} converts \eqref{f-la:z-3} to
\begin{align}\label{f-la:z-4}
&\frac{(p+k)!}{k!}\int_\Reals\frac{1}{(z-t)^{p+1+k}}\,d\nu_{p}(t)\\\nonumber
&\quad=R_{p-1,H_0,V}\left(\frac{1}{(z-t)^{k+1}}\right)
-\frac12\tau\left[\frac{d^2}{dx^2}\bigg|_{x=0}\bigg((zI-H_0-xV)^{-k-1}\bigg)\right].
\end{align}By \eqref{f-la:rem},
\begin{align}\label{f-la:(z-1)^k+1}
&R_{p,H_0,V}\left(\frac{1}{(z-t)^{k+1}}\right)\\\nonumber
&\quad=R_{p-1,H_0,V}\left(\frac{1}{(z-t)^{k+1}}\right)
-\frac12\tau\left[\frac{d^2}{dx^2}\bigg|_{x=0}\bigg((zI-H_0-xV)^{-k-1}\bigg)\right].\end{align} Comparing \eqref{f-la:z-4} and \eqref{f-la:(z-1)^k+1}
completes the proof of \eqref{f-la:tr} for $f(t)=\frac{1}{(z-t)^{k+1}}$.
\end{proof}

\begin{proof}[Proof of Theorem \ref{prop:tr'} for $p=3$.] When $H_0$ is bounded,
Lemma \ref{prop:G_extend} and Theorem \ref{prop:more} prove the theorem for $f\in\Rfr$. Repeating the argument of Step 2 from the proof of Theorem \ref{prop:tr} (i) for $\tau$ the standard trace extends (ii) and (iii) of Theorem \ref{prop:tr'} to $f\in\mathcal{W}_p$ for $H_0$ bounded. Repeating the argument of Step 3 from the proof of Theorem \ref{prop:tr} (i) on each segment of $\Reals$ extends (i) to $f\in C_c^\infty(\Reals)$ for $H_0$ unbounded.
\end{proof}

\begin{proof}[Proof of Theorem \ref{prop:tr_free}]
(i) Due to Theorem \ref{prop:m_free}, there exists a bounded measure $\nu_p$ satisfying the trace formula \eqref{f-la:tr} for $f\in\mathcal{W}_p$. The proof repeats the proof of Theorem \ref{prop:tr} for the standard trace.

(ii) Using the moment--cumulant formula (see \cite[Theorem~2.17]{Speicher}),
we have
\begin{align}\label{f-la:free*}
\tau\big[\big((zI-H_0)^{-1}V\big)^{p-1}\big]=
\sum_{\pi=\{B_1,\ldots,B_\ell\}\in\operatorname{NC}(p-1)}k_{K(\pi)}[V,\ldots,V]
\prod_{j=1}^\ell\tau\big[(zI-H_0)^{-|B_j|}\big],
\end{align}
where (see the proof of Theorem~\ref{prop:m_free} for a bit of explanation, or \cite[Theorem~2.17]{Speicher} for a thorough description)
$k_{K(\pi)}[V,\ldots,V]$ is a polynomial of $\tau(V),\tau(V^2),\ldots,\tau(V^{p-1})$.
Since for $b\ge1$,
\begin{align*}
\tau\big[(zI-H_0)^{-b}\big]=\int_{\Reals^b}\frac{1}{(z-\la_1)\dots(z-\la_b)}\,
\tau\big(E_{H_0}(d\la_1)\cdots E_{H_0}(d\la_1)\big),
\end{align*}
we have
\[
\prod_{j=1}^\ell\tau\big[(zI-H_0)^{-b}\big]=\int_{\Reals^{p-1}}\frac1{(z-\la_1)\cdots(z-\la_p)}d\gamma_{p-1,\pi}(\la_1,\ldots,\la_p),
\]
where $\gamma_{p-1,\pi}$ is the measure described at~\eqref{f-la:mfree*1}.
Combining \eqref{f-la:free*} and \eqref{f-la:mfree*} gives
\[\tau\big[\big((zI-H_0)^{-1}V\big)^{p-1}\big]=
\int_{\Reals^{p-1}}\dd{\la_{p-1}}{p-2}{\frac1{z-\la}}\,dm_{p-1,H_0,V}(\la_1,\dots,\la_{p-1}).\]
Following the lines in the proof of Theorem \ref{prop:more} completes the proof of the absolute continuity of $\nu_p$ and repeating the proof of Lemma \ref{prop:G_extend}, Step 1, proves \eqref{f-la:tr} for $f$ a polynomial.
\end{proof}

\section{Spectral shift functions via basic splines}
\label{sec:xi}

We represent the density of the measure $\nu_p$ provided by Theorem \ref{prop:tr}
as an integral of a basic spline against a certain multiple spectral measure
when $H_0$ and $V$ are matrices. In addition, we show that existence of Krein's spectral shift function can be derived from the representation of the Cauchy transform via basic splines when
$\Mcal$ is finite. The representation of the Cauchy transform via basic splines, in its turn, follows from the double integral representation of $f(H_0+V)-f(H_0)$.


\begin{lemma}\label{prop:g} Let $\dim(\Hcal)<\infty$ and $H_0=H_0^*, V=V^*\in\Mcal=\BH$.
Then the Cauchy transform of the measure $\nu_p$ satisfying \eqref{f-la:tr} equals
\begin{align*}
G_{\nu_p}^{(p)}(z)=\frac{d^p}{dz^p}\bigg[(-1)^p\int_{\Reals^{p+1}}&
\dd{\la_{p+1}}{p}{\frac{1}{(p-1)!}(z-\la)^{p-1}\log(z-\la)}\\
&dm_{p,H_0,V}^{(2)}(\la_1,\la_2,\dots,\la_{p+1})\bigg],\quad\im(z)\neq 0.
\end{align*}
\end{lemma}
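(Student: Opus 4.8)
The plan is to reduce the claim to the elementary identity
\begin{align*}
\frac{1}{(p-1)!}\,\frac{\partial^p}{\partial z^p}\big((z-\la)^{p-1}\log(z-\la)\big)=\frac{1}{z-\la},\qquad \im(z)\neq0,\ \la\in\Reals,
\end{align*}
combined with the commutation of $\frac{\partial}{\partial z}$ with the divided difference taken in $\la$. First I would note that since $\dim(\Hcal)<\infty$, any measure $\nu_p$ satisfying \eqref{f-la:tr} is finite, so Lemma \ref{prop:g_der}, in particular equation \eqref{f-la:g_der2}, applies and gives
\begin{align*}
G_{\nu_p}^{(p)}(z)=(-1)^p\,\tau\big[(zI-H_0-V)^{-1}\big(V(zI-H_0)^{-1}\big)^p\big].
\end{align*}
By the second displayed formula of Remark \ref{prop:imr}, the right-hand side equals $(-1)^p\int_{\Reals^{p+1}}\dd{\la_{p+1}}{p}{f_z}\,dm_{p,H_0,V}^{(2)}(\la_1,\dots,\la_{p+1})$, where $f_z(\la)=\frac1{z-\la}$; and because $\dim(\Hcal)<\infty$ the measure $m_{p,H_0,V}^{(2)}$ is a finite linear combination of point masses supported on $\sigma(H_0+V)\times\sigma(H_0)^p$, so this is in fact a finite sum.

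Next I would prove the displayed calculus identity. Fixing any branch of the logarithm that is holomorphic on $\Complex\setminus\Reals$ (the principal branch works, since $\im(z-\la)=\im(z)\ne0$), one shows by induction on $p$ that
\begin{align*}
\frac{\partial^{p-1}}{\partial z^{p-1}}\big((z-\la)^{p-1}\log(z-\la)\big)=(p-1)!\,\log(z-\la)+c_{p-1}
\end{align*}
for a constant $c_{p-1}$ independent of $z$ (the inductive step uses $\partial_z\big((z-\la)^p\log(z-\la)\big)=p(z-\la)^{p-1}\log(z-\la)+(z-\la)^{p-1}$), and then differentiates once more. The choice of branch is harmless for what follows: two branches of $(z-\la)^{p-1}\log(z-\la)$ differ, as functions of $\la$, by a polynomial of degree $p-1$, whose divided difference of order $p$ vanishes by Proposition \ref{prop:dds}\eqref{f-la:dd2}.

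Finally, for fixed $z\in\Complex\setminus\Reals$ the map $\la\mapsto(z-\la)^{p-1}\log(z-\la)$ is holomorphic in a neighbourhood of $\Reals$, so Proposition \ref{prop:dds}\eqref{f-la:dd3}, and hence Lemma \ref{prop:dd_der}(iii), apply and give
\begin{align*}
\frac{d^p}{dz^p}\left[\dd{\la_{p+1}}{p}{\frac{(z-\la)^{p-1}\log(z-\la)}{(p-1)!}}\right]
&=\dd{\la_{p+1}}{p}{\frac{1}{(p-1)!}\,\frac{\partial^p}{\partial z^p}\big((z-\la)^{p-1}\log(z-\la)\big)}\\
&=\dd{\la_{p+1}}{p}{f_z}.
\end{align*}
Since $m_{p,H_0,V}^{(2)}$ is a finite combination of point masses, $\frac{d^p}{dz^p}$ passes through the integral; combining this with the first paragraph yields precisely the asserted formula for $G_{\nu_p}^{(p)}(z)$. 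I do not expect a genuine obstacle; the only points requiring care are the holomorphy of $\la\mapsto(z-\la)^{p-1}\log(z-\la)$ near $\Reals$ — needed so that divided differences with coinciding nodes are defined and Lemma \ref{prop:dd_der}(iii) is legitimate — and the harmlessness of the branch ambiguity, both addressed above.
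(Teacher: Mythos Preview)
Your proof is correct and follows essentially the same route as the paper's: both start from Lemma~\ref{prop:g_der} and Remark~\ref{prop:imr} to write $G_{\nu_p}^{(p)}(z)=(-1)^p\int_{\Reals^{p+1}}\dd{\la_{p+1}}{p}{f_z}\,dm_{p,H_0,V}^{(2)}$, and then invoke Lemma~\ref{prop:dd_der} together with Proposition~\ref{prop:dds}\eqref{f-la:dd2} to identify $\frac{1}{(p-1)!}(z-\la)^{p-1}\log(z-\la)$ as a $p$-fold $z$-antiderivative of $\frac{1}{z-\la}$ whose polynomial defect is annihilated by the order-$p$ divided difference. Your version adds some care about the branch of the logarithm and exploits the finite-dimensionality to justify the interchange of $\frac{d^p}{dz^p}$ with the integral, but the argument is the same.
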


\begin{proof}Upon applying Remark \ref{prop:imr} and Lemma \ref{prop:g_der}, we obtain
\begin{align*}
G_{\nu_p}^{(p)}(z)=(-1)^p\int_{\Reals^{p+1}}\dd{\la_{p+1}}{p}{\frac{1}{z-\la}}\,
dm_{p,H_0,V}^{(2)}(\la_1,\la_2,\dots,\la_{p+1}).
\end{align*}
By Lemma \ref{prop:dd_der}, one of the antiderivatives of order $p$
of the function \[z\mapsto \dd{\la_{p+1}}{p}{\frac{1}{z-\la}}\]
equals
\[\dd{\la_{p+1}}{p}{\frac{1}{(p-1)!}(z-\la)^{p-1}\log(z-\la)-c_{p-1}(z-\la)^{p-1}},\]
where $c_{p-1}$ is a constant. Applying Proposition \ref{prop:dds}
\eqref{f-la:dd2} gives \[\dd{\la_{p+1}}{p}{c_{p-1}(z-\la)^{p-1}}=0,\]
completing the proof of the lemma.
\end{proof}

\begin{lemma}\label{prop:spline1}Let
$D_{p+1}=\{(\la_1,\la_2,\dots,\la_{p+1})\st\la_1=\la_2=\cdots=\la_{p+1}\in\Reals\}$.
Then, for any $(\la_1,\la_2,\dots,\la_{p+1})\in\Reals^{p+1}\setminus
D_{p+1}$ and $z\in\Complex\setminus\Reals$,
\begin{align*}
&\dd{\la_{p+1}}{p}{\frac{1}{(p-1)!}(z-\la)^{p-1}\log(z-\la)}\\&\quad
=\frac{1}{(p-1)!}\int_\Reals \frac{(-1)^p}{z-t}
\dd{\la_{p+1}}{p}{(\la-t)_+^{p-1}}\,dt.
\end{align*}
\end{lemma}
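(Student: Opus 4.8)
The plan is to apply the Peano--kernel (integral) representation of the divided difference, Proposition~\ref{prop:dds}~\eqref{f-la:dd4}, to the one--variable function
\[
g(\la):=\frac{1}{(p-1)!}(z-\la)^{p-1}\log(z-\la).
\]
Since $z\in\Complex\setminus\Reals$, the set $\{z-\la\st\la\in\Reals\}$ lies in one open half--plane (the upper one if $\im(z)>0$, the lower one otherwise), on which a holomorphic branch of the logarithm is defined; I fix that branch, which is the convention already used in the proof of Theorem~\ref{prop:G_rec}. Consequently $g$ is real--analytic in $\la$ on all of $\Reals$, hence in particular $g\in C^p[\min\{\la_1,\dots,\la_{p+1}\},\max\{\la_1,\dots,\la_{p+1}\}]$. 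Because $(\la_1,\dots,\la_{p+1})\notin D_{p+1}$, at least two of the $\la_i$ are distinct, so the first alternative in \eqref{f-la:dd4} applies and gives
\[
\dd{\la_{p+1}}{p}{g}=\frac{1}{(p-1)!}\int_\Reals g^{(p)}(t)\,\dd{\la_{p+1}}{p}{(\la-t)_+^{p-1}}\,dt.
\]

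It remains to identify $g^{(p)}$. Writing $u=z-\la$, so that $\frac{\partial^p}{\partial\la^p}=(-1)^p\frac{d^p}{du^p}$, one has $\frac{d^p}{du^p}\big(u^{p-1}\log u\big)=\frac{(p-1)!}{u}$. This follows by induction on $p$: the case $p=1$ is $\frac{d}{du}\log u=\frac1u$, and since $\frac{d}{du}\big(u^{p-1}\log u\big)=(p-1)u^{p-2}\log u+u^{p-2}$, differentiating $p-1$ further times kills the polynomial term $u^{p-2}$ and, by the inductive hypothesis, turns $(p-1)u^{p-2}\log u$ into $(p-1)\cdot\frac{(p-2)!}{u}=\frac{(p-1)!}{u}$. (Equivalently, one differentiates the identity \eqref{f-la:k2a} once more.) Hence
\[
g^{(p)}(\la)=\frac{1}{(p-1)!}\cdot(-1)^p\cdot\frac{(p-1)!}{z-\la}=\frac{(-1)^p}{z-\la},
\]
so $g^{(p)}(t)=\frac{(-1)^p}{z-t}$.

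Substituting this into the displayed identity yields exactly
\[
\dd{\la_{p+1}}{p}{\frac{1}{(p-1)!}(z-\la)^{p-1}\log(z-\la)}=\frac{1}{(p-1)!}\int_\Reals\frac{(-1)^p}{z-t}\,\dd{\la_{p+1}}{p}{(\la-t)_+^{p-1}}\,dt,
\]
which is the assertion; the integral converges absolutely because the basic spline $\dd{\la_{p+1}}{p}{(\la-t)_+^{p-1}}$ is bounded and supported in $[\min\{\la_1,\dots,\la_{p+1}\},\max\{\la_1,\dots,\la_{p+1}\}]$ by Proposition~\ref{prop:dds}~\eqref{f-la:dd5}, while $t\mapsto(z-t)^{-1}$ is bounded on $\Reals$. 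There is no genuine obstacle here: the statement reduces to a direct application of \eqref{f-la:dd4} once $g^{(p)}$ is computed, and the only point requiring a word of care is the choice of a holomorphic branch of $\log$ on the half--plane containing $\{z-\la\st\la\in\Reals\}$ (equivalently, checking that $g$ is $C^p$ near the nodes), handled as above.
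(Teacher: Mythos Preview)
Your proof is correct and follows essentially the same approach as the paper: both apply Proposition~\ref{prop:dds}~\eqref{f-la:dd4} to $g(\la)=\frac{1}{(p-1)!}(z-\la)^{p-1}\log(z-\la)$ and then identify $g^{(p)}(t)=\frac{(-1)^p}{z-t}$. You supply more detail than the paper (the choice of branch of $\log$, the inductive derivative computation, and the absolute convergence of the integral), but the argument is the same.
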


\begin{proof}
By Proposition \ref{prop:dds} \eqref{f-la:dd4},
\begin{align*}
&\dd{\la_{p+1}}{p}{\frac{1}{(p-1)!}(z-\la)^{p-1}\log(z-\la)}\\
&\quad=\frac{1}{(p-1)!}\int_\Reals\frac{\partial^p}{\partial t^p}
\left(\frac{1}{(p-1)!}(z-t)^{p-1}\log(z-t)\right)
\dd{\la_{p+1}}{p}{(\la-t)_+^{p-1}}\,dt\\
&\quad=\frac{1}{(p-1)!}\int_\Reals \frac{(-1)^p}{z-t}
\dd{\la_{p+1}}{p}{(\la-t)_+^{p-1}}\,dt.
\end{align*}
\end{proof}

\begin{thm}\label{prop:g_spline}Let $\dim(\Hcal)<\infty$ and $H_0=H_0^*, V=V^*\in\Mcal=\BH$. Then the Cauchy transform of the measure $\nu_p$ satisfying \eqref{f-la:tr} equals
\begin{align*}
&G_{\nu_p}(z)\\&=\int_\Reals\frac{1}{z-t}
\left(\frac{1}{(p-1)!}\int_{\Reals^{p+1}\setminus_{D_{p+1}}}
\dd{\la_{p+1}}{p}{(\la-t)_+^{p-1}}
dm_{p,H_0,V}^{(2)}(\la_1,\la_2,\dots,\la_{p+1})\right)\,dt.
\end{align*}
\end{thm}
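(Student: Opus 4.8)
The plan is to combine Lemma~\ref{prop:g} with Lemma~\ref{prop:spline1} and then interchange the order of integration. First I would recall from Lemma~\ref{prop:g} that, with $\dim(\Hcal)<\infty$,
\[
G_{\nu_p}^{(p)}(z)=\frac{d^p}{dz^p}\left[(-1)^p\int_{\Reals^{p+1}}
\dd{\la_{p+1}}{p}{\tfrac{1}{(p-1)!}(z-\la)^{p-1}\log(z-\la)}\,
dm_{p,H_0,V}^{(2)}(\la_1,\dots,\la_{p+1})\right].
\]
Since $\dim(\Hcal)<\infty$, the measure $m_{p,H_0,V}^{(2)}$ is a finite linear combination of point masses, so all the integrals here are finite sums and there are no convergence subtleties. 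By Lemma~\ref{prop:no_atoms}, $m_{p,H_0,V}^{(2)}$ has no atoms on the diagonal $D_{p+1}$, so we may restrict the integral to $\Reals^{p+1}\setminus D_{p+1}$ without changing its value; on that set Lemma~\ref{prop:spline1} applies pointwise.

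Next I would substitute the identity of Lemma~\ref{prop:spline1},
\[
\dd{\la_{p+1}}{p}{\tfrac{1}{(p-1)!}(z-\la)^{p-1}\log(z-\la)}
=\frac{1}{(p-1)!}\int_\Reals\frac{(-1)^p}{z-t}
\dd{\la_{p+1}}{p}{(\la-t)_+^{p-1}}\,dt,
\]
into the expression for $G_{\nu_p}^{(p)}$. The factors $(-1)^p$ cancel, and after interchanging the (finite-sum) integration against $m_{p,H_0,V}^{(2)}$ with the $dt$-integration — legitimate because $m_{p,H_0,V}^{(2)}$ is a finite combination of point masses and, by Proposition~\ref{prop:dds}~\eqref{f-la:dd5}, for each fixed $(\la_1,\dots,\la_{p+1})$ the basic spline $t\mapsto\dd{\la_{p+1}}{p}{(\la-t)_+^{p-1}}$ is supported in a bounded interval and integrable — one obtains
\[
G_{\nu_p}^{(p)}(z)=\frac{d^p}{dz^p}\left[\int_\Reals\frac{1}{z-t}\,
\Phi(t)\,dt\right],\qquad
\Phi(t):=\frac{1}{(p-1)!}\int_{\Reals^{p+1}\setminus D_{p+1}}
\dd{\la_{p+1}}{p}{(\la-t)_+^{p-1}}\,dm_{p,H_0,V}^{(2)}(\la_1,\dots,\la_{p+1}).
\]
Thus $G_{\nu_p}^{(p)}(z)=\frac{d^p}{dz^p}G_\mu(z)$, where $\mu$ is the finite, compactly supported measure with density $\Phi$.

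It then remains to conclude that $G_{\nu_p}(z)=G_\mu(z)$, i.e.\ that the two $p$th antiderivatives agree and not merely up to a polynomial. Here I would use the normalization at infinity: both $G_{\nu_p}$ and $G_\mu$ are Cauchy transforms of finite compactly supported measures, hence both are $O(1/|z|)$ as $|\im z|\to\infty$ and tend to $0$ there. Two functions that are analytic off $\Reals$, have equal $p$th derivatives, and both vanish at infinity must differ by a polynomial that also vanishes at infinity, hence by zero. This gives exactly
\[
G_{\nu_p}(z)=\int_\Reals\frac{1}{z-t}\,\Phi(t)\,dt
=\int_\Reals\frac{1}{z-t}\left(\frac{1}{(p-1)!}\int_{\Reals^{p+1}\setminus D_{p+1}}
\dd{\la_{p+1}}{p}{(\la-t)_+^{p-1}}\,dm_{p,H_0,V}^{(2)}(\la_1,\dots,\la_{p+1})\right)dt,
\]
which is the claimed formula.

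The routine parts are the Fubini-type interchange and the antiderivative bookkeeping, both of which are genuinely routine in the finite-dimensional setting where every measure in sight is a finite sum of atoms. The only point that needs a little care — and which I regard as the main (modest) obstacle — is justifying that one may replace $\Reals^{p+1}$ by $\Reals^{p+1}\setminus D_{p+1}$ before applying Lemma~\ref{prop:spline1}, since Lemma~\ref{prop:spline1} is stated only off the diagonal; this is precisely what Lemma~\ref{prop:no_atoms} supplies, and one should invoke it explicitly. With that in hand, the proof is a direct chain of substitutions.
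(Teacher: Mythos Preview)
Your proposal is correct and follows essentially the same route as the paper: combine Lemma~\ref{prop:g} with Lemma~\ref{prop:spline1}, use Lemma~\ref{prop:no_atoms} to dispose of the diagonal contribution, interchange the order of integration, and then kill the residual polynomial by comparing asymptotics at infinity. The only cosmetic difference is that the paper keeps the diagonal piece in the decomposition~\eqref{f-la:spline2} and shows it vanishes afterward, whereas you remove it at the outset; and the paper justifies the Fubini step via the bounded variation of $m_{p,H_0,V}^{(2)}$ and the $L_1$-bound on the basic spline (Proposition~\ref{prop:dds}~\eqref{f-la:dd5}), while you use the stronger fact that in finite dimensions the measure is a finite sum of point masses---both are fine.
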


\begin{proof}By Lemmas \ref{prop:g} and \ref{prop:spline1},
\begin{align}\label{f-la:spline2}\nonumber
&G_{\nu_p}(z)=\text{\rm
pol}_p(z)\\\nonumber&+\frac{1}{(p-1)!}\int_{\Reals^{p+1}\setminus_{D_{p+1}}}
\left(\int_\Reals\frac{1}{z-t}
\dd{\la_{p+1}}{p}{(\la-t)_+^{p-1}}\,dt\right)
dm_{p,H_0,V}^{(2)}(\la_1,\la_2,\dots,\la_{p+1})\\
&+\frac{1}{(p-1)!}\int_{D_{p+1}}\frac{1}{z-\la}\,dm_{p,H_0,V}^{(2)}(\la,\la,\dots,\la),
\end{align} where $\text{\rm pol}_p(z)$ is a polynomial of degree $\leq
p$. As stated in Proposition \ref{prop:dds} \eqref{f-la:dd5}, the
basic spline $\dd{\la_{p+1}}{p}{(\la-t)_+^{p-1}}$ is non-negative and
integrable, with the $L_1$-norm equal to $1/p$. By Corollary
\ref{prop:m.k}, the measure $m_{p,H_0,V}^{(2)}$ has bounded variation. On
one hand, it guarantees that the first integral in
\eqref{f-la:spline2} is $\mathcal{O}(1/\im(z))$ as $\im(z)\rightarrow
+\infty$. On the other hand, it allows to change the order of
integration in the first integral in \eqref{f-la:spline2}. By Lemma
\ref{prop:no_atoms}, the second integral in \eqref{f-la:spline2} equals $0$. Comparing the asymptotics of $G_{\nu_p}(z)$ and the
integrals in \eqref{f-la:spline2} as $\im(z)\rightarrow +\infty$
implies that $\text{\rm pol}_p(z)=0$, completing the proof of the
theorem.
\end{proof}

\begin{cor}Let $\dim(\Hcal)<\infty$ and $H_0=H_0^*, V=V^*\in\Mcal=\BH$.
Then the density of the measure $\nu_p$ satisfying \eqref{f-la:tr} equals
\[\eta_p(t)=\frac{1}{(p-1)!}\int_{\Reals^{p+1}\setminus_{D_{p+1}}}
\dd{\la_{p+1}}{p}{(\la-t)_+^{p-1}}
dm_{p,H_0,V}^{(2)}(\la_1,\la_2,\dots,\la_{p+1}),\] for a.e.
$t\in\Reals$.
\end{cor}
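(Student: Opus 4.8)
The plan is to read off $\eta_p$ by inverting the Cauchy-transform formula obtained in Theorem~\ref{prop:g_spline}. Put
\[
g(t)=\frac{1}{(p-1)!}\int_{\Reals^{p+1}\setminus D_{p+1}}\dd{\la_{p+1}}{p}{(\la-t)_+^{p-1}}\,dm_{p,H_0,V}^{(2)}(\la_1,\la_2,\dots,\la_{p+1}),\qquad t\in\Reals.
\]
Theorem~\ref{prop:g_spline} asserts precisely that $G_{\nu_p}(z)=\int_\Reals\frac1{z-t}\,g(t)\,dt$ whenever $\im(z)\neq 0$, so the corollary will follow once we know that $g\in L^1(\Reals)$ and invoke the fact that the Cauchy transform determines a finite measure uniquely.

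The one point to verify is the $L^1$-bound. By Corollary~\ref{prop:m.k} (as already used in the proof of Theorem~\ref{prop:g_spline}), the measure $m_{p,H_0,V}^{(2)}$ has finite total variation on $\Reals^{p+1}$. For each fixed $(\la_1,\dots,\la_{p+1})\notin D_{p+1}$, Proposition~\ref{prop:dds}~\eqref{f-la:dd5} states that the basic spline $t\mapsto\dd{\la_{p+1}}{p}{(\la-t)_+^{p-1}}$ is non-negative with $\int_\Reals\dd{\la_{p+1}}{p}{(\la-t)_+^{p-1}}\,dt=\frac1p$. Hence, by Tonelli's theorem,
\[
\int_\Reals|g(t)|\,dt\leq\frac1{(p-1)!}\int_{\Reals^{p+1}}\Big(\int_\Reals\dd{\la_{p+1}}{p}{(\la-t)_+^{p-1}}\,dt\Big)\,d\big|m_{p,H_0,V}^{(2)}\big|=\frac1{p!}\,\text{\rm var}\big(m_{p,H_0,V}^{(2)}\big)<\infty,
\]
so $g\,dt$ is a finite complex Borel measure on $\Reals$. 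Consequently $G_{\nu_p}=G_{g\,dt}$, and since the Cauchy transform of a finite complex Borel measure on $\Reals$ determines that measure uniquely (for instance via the Stieltjes inversion formula), we get $d\nu_p(t)=g(t)\,dt$. In particular $\nu_p$ is absolutely continuous with $\eta_p=g$ almost everywhere, which is the assertion; since $\nu_p$ is real-valued by Theorem~\ref{prop:tr}, so is $g$.

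There is no serious obstacle here beyond Theorem~\ref{prop:g_spline} itself: the only things requiring attention are the finiteness of $\text{\rm var}(m_{p,H_0,V}^{(2)})$, supplied by Corollary~\ref{prop:m.k}, and the Tonelli interchange used in the $L^1$-estimate above (which also underlies the change of order of integration already performed inside the proof of Theorem~\ref{prop:g_spline}), both of which rest on that bounded-variation bound together with the non-negativity and unit-mass-up-to-$1/p$ property of the basic spline.
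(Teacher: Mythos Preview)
Your proof is correct and follows essentially the same approach as the paper: both rely on Theorem~\ref{prop:g_spline} to identify $G_{\nu_p}$ with the Cauchy transform of $g(t)\,dt$, and then deduce $d\nu_p=g(t)\,dt$ from uniqueness of the Cauchy transform. The only cosmetic difference is in that last step: you invoke Stieltjes inversion directly, whereas the paper cites Step~1 of Lemma~\ref{prop:G_extend} (the power--series expansion of $G_{\nu_p}$ giving equality of all polynomial moments on the compact support), which is just another way of reading off the measure from its Cauchy transform.
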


\begin{proof}
By Theorem \ref{prop:g_spline}, the Cauchy transforms of $\nu_p$ and $\eta_p(t)dt$ coincide.
This implies (see Step 1 in the proof of Lemma \ref{prop:G_extend}) that the functionals given by $\nu_p$ and $\eta_p(t)dt$ coincide on the polynomials defined on $[a,b]$, where $[a,b]$ contains the spectra of $H_0$ and $H_0+V$. Hence, $d\nu_p=\eta_p(t)dt$.
\end{proof}

Below, we prove absolute continuity of $\nu_1$ by techniques different from those of \cite{Krein1}.

\begin{thm}Let $\tau$ be finite. Let $H_0=H_0^*$ be an operator affiliated with $\Mcal$ and $V=V^*\in\Mcal$. The trace formula \eqref{f-la:tr} with $p=1$ holds for every $f\in\mathcal{W}_1$, with $\nu_1$ absolutely continuous. The density $\eta_1$ of $\nu_1$ is given by the formula
\begin{align}
\label{f-la:new_eta1}
\eta_1(t)=\int_{\Reals^2\setminus_{D_2}}
\frac{1}{|\mu-\la|}\chi_{(\min\{\la,\mu\},\max\{\la,\mu\})}(t)\,dm_{1,H_0,V}^{(2)}(\la,\mu),
\end{align} for a.e. $t\in\Reals$.
If, in addition, $H_0$ is bounded, then \eqref{f-la:tr} holds for $f\in\Rfr$.
\end{thm}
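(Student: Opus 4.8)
The plan is to extract the trace formula first for the resolvent functions $f=f_z$, $z\in\Complex\setminus\Reals$, reading off $\eta_1$ directly from the double operator integral \eqref{f-la:doi}, and then to bootstrap, in turn, to $\Rfr_b$, to $\Rfr$ when $H_0$ is bounded, and finally to $\mathcal{W}_1$ by approximation arguments already present in the paper. First I would observe that, by \eqref{f-la:doi} applied to $f=f_z$ --- equivalently, Lemma \ref{prop:res} \eqref{f-la:g_Rres2} with $p=1$ --- $R_{1,H_0,V}(f_z)=(zI-H_0-V)^{-1}V(zI-H_0)^{-1}$, and that the set function $A_1\times A_2\mapsto\tau[E_{H_0+V}(A_1)VE_{H_0}(A_2)]$ extends to a finite real-valued Borel measure $m_{1,H_0,V}^{(2)}$ on $\Reals^2$, by Corollary \ref{prop:m.k} with $p=2$ (which applies since $\tau$ is finite and $V\in\Mcal$) and by Lemma \ref{prop:R_spline}. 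Applying $\tau$ and using Lemma \ref{prop:i_to_m_g} (with $H_1=H_0+V$, $H_2=H_0$, $V_1=V$, $V_2=I$, which lies in $\ncs{2}$ since $\tau$ is finite, and $f_1=f_2=f_z\in C_0^\infty(\Reals)$; this is valid for unbounded $H_0$) together with Lemma \ref{f-la:dd1}, one gets
\[
\tau\big[R_{1,H_0,V}(f_z)\big]=\int_{\Reals^2}\De^{(1)}_{\la,\mu}(f_z)\,dm_{1,H_0,V}^{(2)}(\la,\mu)=\int_{\Reals^2}\frac{1}{(z-\la)(z-\mu)}\,dm_{1,H_0,V}^{(2)}(\la,\mu).
\]

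Next I would insert the basic-spline representation of the integrand. By Proposition \ref{prop:dds} \eqref{f-la:dd4} and \eqref{f-la:dd5} with $p=1$, for $\la\neq\mu$,
\[
\De^{(1)}_{\la,\mu}(f_z)=\int_\Reals f_z'(t)\,\frac{\chi_{(\min\{\la,\mu\},\max\{\la,\mu\})}(t)}{|\mu-\la|}\,dt=\int_\Reals\frac{1}{(z-t)^2}\,\frac{\chi_{(\min\{\la,\mu\},\max\{\la,\mu\})}(t)}{|\mu-\la|}\,dt,
\]
while the diagonal $D_2$ is $m_{1,H_0,V}^{(2)}$-null: Lemma \ref{prop:no_atoms} (with $k=1$) rules out atoms, and --- this is where finiteness of $\tau$ enters essentially --- the push-forward of $m_{1,H_0,V}^{(2)}$ to $D_2$ is the zero measure, as one sees from the Cauchy--Schwarz estimate $\sum_j|\tau[E_{H_0+V}(I_j)VE_{H_0}(I_j)]|\leq\mathrm{mesh}\cdot\tau(I)$ over a fine partition $\{I_j\}$, via the identity $E_{H_0+V}(I_j)VE_{H_0}(I_j)=\big((H_0+V-\la_j)E_{H_0+V}(I_j)\big)E_{H_0}(I_j)-E_{H_0+V}(I_j)\big((H_0-\la_j)E_{H_0}(I_j)\big)$ with $\la_j$ the midpoint of $I_j$. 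Since for $p=1$ the basic spline is non-negative with integral $1$ and $m_{1,H_0,V}^{(2)}$ is finite, Fubini's theorem justifies interchanging the integrations, and I obtain $\tau[R_{1,H_0,V}(f_z)]=\int_\Reals f_z'(t)\,\eta_1(t)\,dt$ with $\eta_1$ exactly as in \eqref{f-la:new_eta1}; by Tonelli $\eta_1\in L^1(\Reals)$, and $\eta_1$ is real-valued because $m_{1,H_0,V}^{(2)}$ is. Thus $d\nu_1(t):=\eta_1(t)\,dt$ is a finite, real-valued, absolutely continuous measure satisfying \eqref{f-la:tr} (with $p=1$) for $f=f_z$, and it is the unique such measure since these identities determine its Cauchy transform.

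To finish I would enlarge the class of test functions. Differentiating the identity $\tau[R_{1,H_0,V}(f_z)]=\int_\Reals f_z'(t)\eta_1(t)\,dt$ $k$ times in $z$ --- the interchange of $\frac{d}{dz}$ with the trace being justified by $\norm{\cdot}_{1,\infty}$-continuity as in the proof of Lemma \ref{prop:d=} --- yields \eqref{f-la:tr} for $f(\la)=(z-\la)^{-(k+1)}$, hence, by linearity and partial fractions, for all $f\in\Rfr_b$. If $H_0$ is bounded, then $m_{1,H_0,V}^{(2)}$, and therefore $\eta_1$, is supported in any $[a,b]\supseteq\sigma(H_0)\cup\sigma(H_0+V)$, so $\nu_1$ has compact support; repeating Step 1 of the proof of Lemma \ref{prop:G_extend} (which, as noted there, works for every natural number in place of $p$) extends \eqref{f-la:tr} to all polynomials, hence to all $f\in\Rfr$. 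The extension to $f\in\mathcal{W}_1$ is then carried out exactly as in Steps 2 and 3 of the proof of Theorem \ref{prop:tr}(i): for bounded $H_0$, approximate $e^{\i t\la}$ and its derivatives uniformly on a disc by rational functions (Runge's theorem) and pass to the limit via Lemma \ref{prop:fcalc}, then invoke Corollary \ref{prop:dos_der}; for unbounded $H_0$, replace $H_0$ by $H_{0,n}=E_{H_0}((-n,n))H_0$, apply the bounded case, and pass to the limit via Lemma \ref{prop:edertr}, Proposition \ref{prop:so*} and Helly's selection theorem, the limiting measure coinciding with $\nu_1$ by the uniqueness just noted.

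The step demanding the most care is the vanishing of the diagonal contribution: Lemma \ref{prop:no_atoms} only furnishes the absence of atoms, whereas what is really needed is that $D_2$ carry no $m_{1,H_0,V}^{(2)}$-mass at all, and it is precisely here that the hypothesis that $\tau$ be finite is used; the remaining steps are an assembly of tools already developed in the earlier sections.
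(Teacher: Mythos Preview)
Your proof is correct and follows the same overall strategy as the paper: write $\tau[R_{1,H_0,V}(f_z)]$ as an integral against $m_{1,H_0,V}^{(2)}$, split off the diagonal, convert the off--diagonal piece into $\int f_z'(t)\eta_1(t)\,dt$ via the basic--spline identity, and then enlarge the class of admissible $f$ by the approximation machinery already in the paper.

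The substantive difference is in the handling of the diagonal. The paper invokes Lemma~\ref{prop:no_atoms} together with ``Poisson inversion'' to conclude that the diagonal term contributes nothing; as you correctly point out in your closing paragraph, absence of atoms alone does not force the boundary values of a Cauchy transform to vanish, so that step in the paper is at best underspecified. Your mesh argument is more direct and actually proves the stronger statement $m_{1,H_0,V}^{(2)}(D_2)=0$: from the identity
\[
E_{H_0+V}(I_j)VE_{H_0}(I_j)=\big((H_0+V-\la_j)E_{H_0+V}(I_j)\big)E_{H_0}(I_j)-E_{H_0+V}(I_j)\big((H_0-\la_j)E_{H_0}(I_j)\big)
\]
and Cauchy--Schwarz one gets $\sum_j|m_{1,H_0,V}^{(2)}(I_j\times I_j)|\le h\,\tau(I)$, and since for nested refinements the squares $\bigsqcup_j I_j\times I_j$ decrease to $D_2$ (up to countably many corner points, which carry no mass by Lemma~\ref{prop:no_atoms}), continuity from above of the finite complex measure $m_{1,H_0,V}^{(2)}$ gives $m_{1,H_0,V}^{(2)}(D_2\cap(I\times I))=0$ for every interval $I$, hence $m_{1,H_0,V}^{(2)}|_{D_2}=0$. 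It would be worth making this limiting step explicit, since your one--line summary (``push--forward \ldots\ is the zero measure, as one sees from the Cauchy--Schwarz estimate'') hides the passage from the bound on $\sum_j|m^{(2)}(I_j\times I_j)|$ to the vanishing of $m^{(2)}$ on $D_2$ itself.

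For the extension to $\mathcal{W}_1$ the paper simply cites \cite[Lemma~8.3.2]{Yafaev}, whereas you route through the Runge/Helly argument of Steps~2 and~3 in the proof of Theorem~\ref{prop:tr}(i); both are valid, and yours has the virtue of staying entirely within the paper.
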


\begin{proof}
Repeating the argument in the proof of Theorem \ref{prop:g_spline} leads to
the formula \eqref{f-la:spline2}. By Lemma \ref{prop:R_spline}, the measure $m_{1,H_0,V}^{(2)}$ is real-valued. Then by Lemma \ref{prop:no_atoms} and the Poisson inversion, for any $x\in\Reals$,
\[\lim_{\varepsilon\rightarrow 0^+}
\im\left(\int_{D_{2}}\frac{1}{x+\i\varepsilon-\la}\,dm_{1,H_0,V}^{(2)}(\la,\la)\right)=0,\]
proving Krein's trace formula for $f=f_z$ with $\eta_1$ given by \eqref{f-la:new_eta1}.
Adjusting the argument in the proof of Lemma \ref{prop:G_extend}, Step 2, extends \eqref{f-la:tr} to $f\in\Rfr_b$. Repeating the argument in the proof of
\cite[Lemma 8.3.2]{Yafaev} extends the result of the theorem from $f\in\Rfr$ to $f\in\mathcal{W}_1$ 
with the same absolutely continuous measure $d\nu_1(t)=\eta_1(t)\,dt$.
\end{proof}

\section{Higher order spectral averaging formulas}
\label{sec:sa}

\begin{thm}\label{prop:hosa}
Assume that $H_0=H_0^*\in\Mcal$ and either $\tau$ is standard or $p=2$. Let $V\in\ncs{2}$. Then the measure
\begin{align*}
\int_0^1(1-x)^{p-1}\tau\big[(E_{H_0+xV}(dt)V)^p\big]\,dx
\end{align*}is absolutely continuous with the density equal to
\begin{align*}
&\eta_p(t)(p-1)!\\&-p\int_0^1(1-x)^{p-1}\int_{\Reals^{p+1}\setminus D_{p+1}}\dd{\la_{p+1}}{p}{(\la-t)^{p-1}_+}
\,dm_{p,H_0+xV,V}^{(1)}(\la_1,\dots,\la_{p+1})\,dx.
\end{align*}
\end{thm}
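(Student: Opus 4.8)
The plan is to play off two expressions for $\tau[R_{p,H_0,V}(f)]$ against each other, with $f$ ranging over $\Rfr_b$ (only $f=f_z$ is actually needed), and to read off the densities from the resulting scalar identity. Throughout, fix $[a,b]\supset\sigma(H_0)\cup\sigma(H_0+V)$, write $H_x=H_0+xV$, and denote by $\mu_{\mathrm{av}}$ the measure appearing in the statement. First, under the stated hypotheses Theorem \ref{prop:tr} (when $\tau$ is standard) or Theorem \ref{prop:tr'} (when $p=2$) guarantees that $\nu_p$ is absolutely continuous with density $\eta_p$ and, since $H_0$ is bounded, that the trace formula \eqref{f-la:tr} holds on $\Rfr$; thus $\tau[R_{p,H_0,V}(f)]=\int_\Reals f^{(p)}(t)\,\eta_p(t)\,dt$. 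Second, substituting the multiple operator integral of Corollary \ref{prop:i_to_m} into the integral remainder formula \eqref{f-la:Rpf} and using Proposition \ref{prop:so*} to commute $\tau$ with the $x$--integral gives
\[
\tau[R_{p,H_0,V}(f)]=p\int_0^1(1-x)^{p-1}\int_{\Reals^{p+1}}\dd{\la_{p+1}}{p}{f}\,dm_{p,H_x,V}^{(1)}(\la_1,\dots,\la_{p+1})\,dx .
\]

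Next I would split the inner integral over $D_{p+1}$ and over $\Reals^{p+1}\setminus D_{p+1}$ and invoke the divided--difference identity of Proposition \ref{prop:dds} \eqref{f-la:dd4}: on $\Reals^{p+1}\setminus D_{p+1}$ one has $\dd{\la_{p+1}}{p}{f}=\frac1{(p-1)!}\int_\Reals f^{(p)}(t)\dd{\la_{p+1}}{p}{(\la-t)^{p-1}_+}\,dt$, while on $D_{p+1}$ it equals $\frac1{p!}f^{(p)}(\la_1)$. In the off--diagonal term I interchange the $t$-- and $m_{p,H_x,V}^{(1)}$--integrations; this is legitimate since, by Corollary \ref{prop:m.x}, $m_{p,H_x,V}^{(1)}$ is a Borel measure of total variation at most $\norm{V}_2^p$ uniformly in $x$, all measures are supported in $[a,b]$, the basic spline $\dd{\la_{p+1}}{p}{(\la-t)^{p-1}_+}$ is nonnegative with $L^1$--norm $1/p$ by Proposition \ref{prop:dds} \eqref{f-la:dd5}, and $x\mapsto m_{p,H_x,V}^{(1)}$ is weakly measurable by the usual approximation of spectral projections by continuous functions of $H_x$. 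For the diagonal term, the algebraic identity $m_{p,H_x,V}^{(1)}(A\times\dots\times A)=\tau\big[(E_{H_x}(A)V)^p\big]$ (cyclicity of $\tau$ and $E_{H_x}(A)^2=E_{H_x}(A)$), together with continuity from above of the finite measure $m_{p,H_x,V}^{(1)}$, identifies the push--forward to $\Reals$ of $m_{p,H_x,V}^{(1)}|_{D_{p+1}}$ with the measure $A\mapsto\tau[(E_{H_x}(A)V)^p]$; integrating this over $x$ against $(1-x)^{p-1}\,dx$ returns $\mu_{\mathrm{av}}$. Collecting the two terms and multiplying by $(p-1)!$ yields, for every $f\in\Rfr_b$,
\begin{align*}
\int_\Reals f^{(p)}(t)\,d\mu_{\mathrm{av}}(t)
&=\int_\Reals f^{(p)}(t)\bigg((p-1)!\,\eta_p(t)\\
&\qquad-p\int_0^1(1-x)^{p-1}\int_{\Reals^{p+1}\setminus D_{p+1}}\dd{\la_{p+1}}{p}{(\la-t)^{p-1}_+}\,dm_{p,H_x,V}^{(1)}(\la_1,\dots,\la_{p+1})\,dx\bigg)\,dt .
\end{align*}

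Finally, both $\mu_{\mathrm{av}}$ and the measure with the bracketed density are finite and supported in $[a,b]$, and the test functions $f_z^{(p)}(t)=p!\,(z-t)^{-(p+1)}$, $z\in\Complex\setminus\Reals$, separate such measures (apply the uniqueness theorem for Cauchy transforms after $p$ integrations, exactly as in Step 1 of the proof of Lemma \ref{prop:G_extend}). Hence the two measures coincide, which is precisely the assertion.

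The main obstacle I anticipate is the diagonal bookkeeping: one has to verify carefully that the ``$dt$--contracted'' diagonal part of $m_{p,H_x,V}^{(1)}$ is exactly the integrand $\tau[(E_{H_x}(dt)V)^p]$ of the averaging formula — this is where cyclicity of $\tau$, idempotency of the spectral projections, and the countable additivity and bounded variation supplied by Corollary \ref{prop:m.x} enter — and, alongside it, to justify the two Fubini interchanges (over $\Reals^{p+1}\setminus D_{p+1}$ and over $x\in[0,1]$) for these bounded--variation but a priori non--positive measures.
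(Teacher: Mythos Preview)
Your proposal is correct and follows essentially the same route as the paper: both compute $\tau[R_{p,H_0,V}(f)]$ two ways---once via the spectral shift density $\eta_p$ and once via the integral remainder formula combined with the multiple--spectral--measure representation of the derivative---then split the inner integral into diagonal and off--diagonal pieces using Proposition~\ref{prop:dds}\,\eqref{f-la:dd4}, swap the order of integration in the off--diagonal term, and read off the equality of measures. The only cosmetic difference is the class of test functions: the paper works with $f\in C_c^\infty(\Reals)$ that restrict to polynomials on $[a,b]$ (invoking Remark~\ref{prop:imrp}) and concludes by density of polynomials, whereas you run the argument with $f=f_z$ and conclude via Cauchy--transform uniqueness as in Step~1 of Lemma~\ref{prop:G_extend}; these are interchangeable. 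Your extra care about the diagonal bookkeeping and the Fubini justifications is welcome---the paper simply writes the diagonal contribution as $\int_\Reals f^{(p)}(t)\,\tau[(E_{H_x}(dt)V)^p]$ without further comment, so the notation there is to be understood exactly as the push--forward of $m_{p,H_x,V}^{(1)}\big|_{D_{p+1}}$ that you describe.
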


\begin{proof}Let $[a,b]\supset\sigma(H_0)\cup\sigma(H_0+V)$. Then
by Theorem \ref{prop:Rpf} \eqref{f-la:Rpf} and Remark \ref{prop:imrp},
\begin{align*}
&\tau[R_{p,H_0,V}(f)]\\&\quad=\frac{1}{(p-1)!}\int_0^1(1-x)^{p-1}
\tau\left[\frac{d^p}{dx^p}f(H_0+xV)\right]\,dx\\
&\quad=\frac{1}{(p-1)!}\int_0^1(1-x)^{p-1}
p!\int_{\Reals^{p+1}}\dd{\la_{p+1}}{p}{f}dm_{p,H_0+xV,V}^{(1)}(\la_1,\dots,\la_{p+1})\,dx,
\end{align*} for $f\in C_c^\infty(\Reals)$ such that $f\big|_{[a,b]}$ coincides with a polynomial. Applying Proposition \ref{prop:dds} \eqref{f-la:dd4} and then changing the order of integration yield
\begin{align}\label{f-la:sa1}\nonumber
&\tau[R_{p,H_0,V}(f)]\\\nonumber
&\quad=\frac{p}{(p-1)!}\int_0^1(1-x)^{p-1}\int_{\Reals^{p+1}\setminus
D_{p+1}}\bigg(\int_\Reals f^{(p)}(t)\\\nonumber
&\quad\quad\quad\quad\quad\quad\quad\quad\quad\dd{\la_{p+1}}{p}{(\la-t)^{p-1}_+}
\,dt\bigg)\,dm_{p,H_0+xV,V}^{(1)}(\la_1,\dots,\la_{p+1})\,dx\\\nonumber
&\quad\quad+\frac{1}{(p-1)!}\int_0^1(1-x)^{p-1}\int_{D_{p+1}}
f^{(p)}(\la)\,dm_{p,H_0+xV,V}^{(1)}(\la,\dots,\la)\,dx.\\
&\quad=\int_\Reals
f^{(p)}(t)\frac{p}{(p-1)!}\bigg(\int_0^1(1-x)^{p-1}\\\nonumber
&\quad\quad\quad\quad\quad\quad\int_{\Reals^{p+1}\setminus
D_{p+1}}\dd{\la_{p+1}}{p}{(\la-t)^{p-1}_+}
\,dm_{p,H_0+xV,V}^{(1)}(\la_1,\dots,\la_{p+1})\,dx\bigg)\,dt\\\nonumber
&\quad\quad+\int_\Reals f^{(p)}(t)\frac{1}{(p-1)!}\int_0^1(1-x)^{p-1}\tau\big[(E_{H_0+xV}(dt)V)^p\big]\,dx.
\end{align}
Along with Theorem \ref{prop:tr} in the case of $\Mcal=\BH$ or Theorem \ref{prop:tr'} in the case of a general $\Mcal$, respectively, \eqref{f-la:sa1} implies that
\begin{align}\label{f-la:sa3}\nonumber
&\int_\Reals f^{(p)}(t)\frac{1}{(p-1)!}\int_0^1(1-x)^{p-1}\tau\big[(E_{H_0+xV}(dt)V)^p\big]\,dx\\
&\quad=\int_\Reals f^{(p)}(t)\eta_p(t)\,dt-\int_\Reals
f^{(p)}(t)\frac{p}{(p-1)!}\bigg(\int_0^1(1-x)^{p-1}\\\nonumber
&\quad\quad\quad\quad\quad\quad\int_{\Reals^{p+1}\setminus
D_{p+1}}\dd{\la_{p+1}}{p}{(\la-t)^{p-1}_+}
\,dm_{p,H_0+xV,V}^{(1)}(\la_1,\dots,\la_{p+1})\,dx\bigg)\,dt,
\end{align}from which the statement of the theorem follows.
\end{proof}

\begin{remark}
The assertion of Theorem \ref{prop:hosa} remains true if $\tau$ is finite, $H_0$ and $V$ are free in $(\Mcal,\tau)$, and $p\geq 2$.
\end{remark}

\begin{remark}The argument in the proof of Theorem \ref{prop:hosa} can be repeated for $p=1$, provided $V\in\ncs{1}$.
Since $m_{1,H_0+xV,V}^{(1)}(A_1\times A_2)=\tau[E_{H_0+xV}(A_1\cap A_2)V]$, for $A_1,A_2\in\Reals$, one has that $m_{1,H_0+xV,V}^{(1)}(\Reals^{p+1}\setminus D_{p+1})=0$. Therefore, \eqref{f-la:sa1} converts to
\begin{align*}
&\tau[f(H_0+V)-f(H_0)]\\
&\quad=\int_0^1\int_\Reals
f'(t)\tau[E_{H_0+xV}(dt)V]\,dx=\int_\Reals
f'(t)\int_0^1\tau[E_{H_0+xV}(dt)V]\,dx.
\end{align*} Along with Krein's trace formula the latter implies that
$\int_0^1\tau[E_{H_0+xV}(dt)V]\,dx=\eta_1(t)\,dt$.
\end{remark}

\subsection*{Acknowledgment}
The authors thank Bojan Popov for helpful conversations about splines.

\bibliographystyle{plain}

\end{document}